\documentclass[12pt]{amsart}
\usepackage{euscript,epsf,amssymb,xr}

\setlength{\marginparwidth}{1.9cm}
\let\oldmarginpar\marginpar
\renewcommand\marginpar[1]
{\oldmarginpar{\tiny\bf \begin{flushleft} #1 \end{flushleft}}}

\input xy
\xyoption{all}

\setlength{\parskip}{0.3\baselineskip}
\setlength{\oddsidemargin}{0pt} \setlength{\evensidemargin}{0pt}
\setlength{\textwidth}{450pt} \setlength{\textheight}{605pt}
\setlength{\topmargin}{-20pt}

\newcommand{\la}{\langle}
\newcommand{\ra}{\rangle}

\newtheorem{theorem}{\bf Theorem}[section]
\newtheorem{lemma}[theorem]{\bf Lemma}

\newtheorem{corollary}[theorem]{\bf Corollary}

\newcommand{\CC}{{\Bbb C}}

\newcommand{\CP}{{\Bbb CP}}

\newcommand{\FF}{{\Bbb F}}

\newcommand{\NN}{{\Bbb N}}

\newcommand{\RR}{{\Bbb R}}

\newcommand{\ZZ}{{\Bbb Z}}

\newcommand{\CCC}{{\mathcal C}}

\newcommand{\ggreat}{>\kern-.7ex>}
\newcommand{\ssmall}{<\kern-.7ex<}
\newcommand{\qu}{/\kern-.7ex/}
\newcommand{\exh}{\to\kern-1.8ex\to}

\newcommand{\cC}{{\EuScript{C}}}
\newcommand{\dD}{{\EuScript{D}}}

\newcommand{\fF}{{\EuScript{F}}}
\newcommand{\gG}{{\EuScript{G}}}
\newcommand{\hH}{{\EuScript{H}}}

\newcommand{\sS}{{\EuScript{S}}}

\newcommand{\GL}{\operatorname{GL}}

\newcommand{\Aut}{\operatorname{Aut}}

\newcommand{\depth}{\operatorname{depth}}

\newcommand{\Diff}{\operatorname{Diff}}
\newcommand{\GCD}{\operatorname{GCD}}
\newcommand{\stable}{\operatorname{st}}

\newcommand{\Jord}{\operatorname{Jord}}
\newcommand{\Ker}{\operatorname{Ker}}

\renewcommand{\O}{\operatorname{O}}
\newcommand{\sd}{\operatorname{sd}}

\newcommand{\Tr}{\operatorname{Tr}}

\newcommand{\ov}{\overline}
\newcommand{\un}{\underline}

\newcommand{\vt}{\vartriangleleft}

\setcounter{tocdepth}{1}

\title{Finite group actions on manifolds without odd cohomology}

\author{Ignasi Mundet i Riera}
\address{Departament d'\`Algebra i Geometria\\
Facultat de Matem\`atiques\\
Universitat de Barcelona\\
Gran Via de les Corts Catalanes 585\\
08007 Barcelona \\
Spain}
\email{ignasi.mundet@ub.edu}

\date{May 23, 2014}

\subjclass[2010]{57S17,54H15}

\begin{document}

\maketitle

\begin{abstract}
Let $X$ be a compact smooth manifold,
possibly with boundary. Denote by $X_1,\dots,X_r$ the connected components
of $X$. Assume that $H^*(X;\ZZ)$
is torsion free and supported in even degrees. We prove that
there exists a constant $C$ such that any finite group $G$
acting smoothly and effectively on $X$
has an abelian subgroup $A$ of index at most $C$, which can be generated
by at most $\sum_i[\dim X_i/2]$ elements, and which satisfies $\chi(X_i^A)=\chi(X_i)$
for every $i$.
In particular this confirms, for all such manifolds $X$, a conjecture of \'Etienne Ghys.
An essential ingredient of the proof is a result on finite groups by
Alexandre Turull which uses the classification of finite simple groups.
\end{abstract}

\section{Introduction}

\subsection{Main results}
In this paper we study smooth actions of finite groups on smooth compact manifolds
which may have boundary. We will implicitly assume, unless we say the contrary, all manifolds
to be smooth and compact, but not necessarily connected,
all groups to be finite (except for some obvious exceptions) and
all actions of groups on manifolds to be smooth.

We say that a manifold $X$ has no odd cohomology if its integral cohomology is torsion free and
supported in even degrees (note that if $X$ is orientable and closed then the assumption that
$H^*(X;\ZZ)$ is supported in even degrees implies, by Poincar\'e duality and the universal
coefficient theorem, that the cohomology is torsion free).

The following is the main result of this paper.

\begin{theorem}
\label{thm:main}
Let $X$ be a 
manifold without odd cohomology, with connected components $X_1,\dots,X_r$.
There exists an integer $C\geq 1$ such
that any finite  group $G$ acting effectively on $X$
has an abelian subgroup $A$ of index at most $C$, which can be generated
by at most $\sum_i[\dim X_i/2]$ elements, and such that for any
$i$ we have $\chi(X_i^A)=\chi(X_i)$.
In particular, $A$ has fixed points in every connected component of $X$.
\end{theorem}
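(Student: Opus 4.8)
The plan is to combine Smith theory for elementary abelian $p$-group actions on manifolds without odd cohomology, linearization of a finite action at a fixed point, and the classification-based group-theoretic theorem of Turull, which packages the resulting numerical bounds into one subgroup of controlled index. Two observations will be used repeatedly: a connected $X$ without odd cohomology has $\chi(X)=\dim_{\QQ}H^*(X;\QQ)\geq 1$, so the clause about fixed points follows automatically once $\chi(X_i^A)=\chi(X_i)$ is proved; and the subgroup of $G$ preserving each $X_i$ has index at most $r!$, so one may assume $G\subseteq\prod_i\Diff(X_i)$, the components then interacting only through the generator count.

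The heart of the matter is: every finite $G\subseteq\Diff(X)$ has an abelian subgroup $A$ of index bounded in terms of $X$ with $\chi(X_i^A)=\chi(X_i)$ for all $i$. Smith theory first shows that for each prime $p$ and each elementary abelian $p$-subgroup $E$ of $\Diff(X_i)$ the fixed locus $X_i^E$ has no odd mod-$p$ cohomology and satisfies $\dim_{\ZZ/p}H^*(X_i^E;\ZZ/p)\leq\dim_{\ZZ/p}H^*(X_i;\ZZ/p)$; with the Mann--Su estimates this bounds the $p$-rank of any finite subgroup of $\Diff(X)$ by a constant depending only on $X$. Turull's theorem then yields inside $G$ an abelian subgroup $A_0$ of index and of number of generators bounded in terms of $X$. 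One passes from $A_0$ to $A$ prime by prime in the abelian group $A_0$: if $p$ is large compared with $\dim_{\ZZ/p}H^*(X_i;\ZZ/p)$, the congruence $\chi(X_i^{\ZZ/p})\equiv\chi(X_i)\pmod p$ together with Smith's bound forces $\chi(X_i)$ to be preserved already by the whole Sylow $p$-subgroup of $A_0$, so only finitely many primes need attention; at each of those one discards from the Sylow $p$-subgroup a subgroup of index bounded in terms of $X$ --- of $2$-power index at most $\prod_i 2^{\dim X_i}$ when $p=2$, accounting for reflection-type behaviour --- after which the fixed loci are cohomologically as large as the $X_i$ and still without odd cohomology (for odd $p$ the latter is automatic, since codimensions of fixed loci are then even). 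I expect this passage from $A_0$ to $A$ to be the main obstacle: keeping the index sacrificed bounded independently of $G$, the delicate case being $p=2$, where fixed loci of involutions need not have even codimension and mod-$2$ cohomology can genuinely degenerate. Turull's result is what makes the rest possible, by first replacing the a priori unbounded, possibly highly non-abelian $G$ by the tame group $A_0$.

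It remains to bound the number of generators of $A$. Writing $A_i$ for the image of $A$ in $\Diff(X_i)$, the group $A$ embeds in $\prod_i A_i$, so its minimal number of generators is at most the sum over $i$ of those of $A_i$, and each of the latter equals the maximum over all primes $p$ of the $p$-rank of $A_i$; thus it suffices to bound each such $p$-rank by $[\dim X_i/2]$. Smith-theoretic monotonicity propagates $\chi(X_i^A)=\chi(X_i)$ to $\chi(X_i^{(\ZZ/p)^k})=\chi(X_i)$, with $X_i^{(\ZZ/p)^k}$ again free of odd cohomology, for every $(\ZZ/p)^k\subseteq A_i$. Since $\chi(X_i)>0$, such a $(\ZZ/p)^k$ fixes a point $x\in X_i$, and, acting effectively on $X_i$, it acts faithfully on the normal space $N$ at $x$, with $N^{(\ZZ/p)^k}=0$. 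For $p$ odd this alone forces $\dim N$ to be even and embeds $(\ZZ/p)^k$ into a torus of dimension $\dim N/2$, whence $k\leq[\dim X_i/2]$; for $p=2$ a Borel-formula computation in $H^*_{(\ZZ/2)^k}(X_i;\ZZ/2)$, using that it is supported in even degrees, gives $\dim N\geq 2k$, whence again $k\leq[\dim X_i/2]$. Summing over $i$ bounds the number of generators of $A$ by $\sum_i[\dim X_i/2]$, and $\chi(X_i^A)=\chi(X_i)>0$ produces the fixed point in every component.
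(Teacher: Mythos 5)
There are two genuine gaps. The first and most serious is your invocation of Turull's theorem. The result actually available (Theorem \ref{thm:MT}) does not take a bound on $p$-ranks as input: its hypothesis is that every group in the class whose order has \emph{at most two distinct prime divisors} already possesses an abelian subgroup of bounded index and bounded rank. A rank bound alone does not imply this — the groups $\ZZ/p\rtimes\ZZ/(p-1)$, or $\SL(2,p)$, have all Sylow subgroups of rank at most $2$ yet admit no abelian subgroup of index bounded independently of $p$ — so your step ``Mann--Su bounds the $p$-rank, hence Turull yields $A_0$'' is a non sequitur. Supplying the two-prime hypothesis for finite subgroups of $\Diff(X_i)$ is precisely Theorem \ref{thm:bounded-primes} (for $r\leq 2$), whose proof via $K$-stable subgroups, the $p$-depth induction and the arithmetic Lemma \ref{lemma:equacio-diofantina} occupies all of Section \ref{s:proof-thm:bounded-primes}; your proposal omits this entirely. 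Relatedly, the passage from $A_0$ to $A$ that you flag as ``the main obstacle'' is indeed not routine for small primes: the congruence $\chi(X_i^{\Gamma_0})\equiv\chi(X_i)\bmod p$ together with the bound $|\chi(X_i^{\Gamma_0})|\leq\sum_j b_j(X_i;\FF_p)$ pins down $\chi(X_i^{\Gamma_0})$ only when $p>2\chi(X_i)$; for the finitely many small primes the paper needs the isotropy-counting argument of Lemmas \ref{lemma:isotropy-subgroups} and \ref{lemma:exists-Gamma-chi-p}, not merely discarding ``reflection-type'' subgroups.

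The second gap is that your argument for the generator count is false as stated. It is not true that an abelian $A$ with $\chi(X_i^A)=\chi(X_i)$ for all $i$ automatically has $p$-rank at most $[\dim X_i/2]$ on each component: let $A=(\ZZ/2)^2$ act on $S^2\subset\RR^3$ by $\diag(\pm1,\pm1,1)$. Then $(S^2)^A=\{(0,0,\pm1)\}$, so $\chi((S^2)^A)=2=\chi(S^2)$, yet $A$ has rank $2>[\dim S^2/2]=1$; at a fixed point the normal representation is faithful with zero fixed subspace and $\dim N=2<2k=4$, refuting your claimed inequality $\dim N\geq 2k$. The correct route (Theorem \ref{thm:main-abelian-odd-CT}) is to pass to a \emph{further} subgroup of bounded index that embeds in $(S^1)^{[\dim X_i/2]}$ via Theorem \ref{thm:abelian-generat}, and then to re-establish $\chi(X_i^{A})=\chi(X_i)$ for that smaller subgroup — which is exactly what requires the $K$-stability mechanism (Lemma \ref{lemma:good-subgroup-of-stable}): an arbitrary bounded-index subgroup of an abelian group preserving the Euler characteristic need not itself preserve it, unless the ambient group was chosen $K$-stable for $K$ exceeding the index lost.
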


It will be clear from the proof that the theorem can be extended to manifolds
with vanishing odd dimensional Betti numbers and with torsion in their cohomology,
as long as the order of the torsion
and the order of the finite group acting on the manifold are coprime.
The constant $C$ in the statement of the theorem
can be chosen to depend only on the dimension of $X$ and the
sum of the Betti numbers of $X$,
although we don't give in this paper any formula for it, nor do we try to optimize
our arguments so as to find the best value for $C$.

Theorem \ref{thm:main} applies for example to contractible manifolds and to
even dimensional integral homology spheres. More generally,
an abundant collection of manifolds
to which Theorem \ref{thm:main} applies are closed manifolds admitting
a Morse function all of whose critical points have even index. These include
closed compact symplectic manifolds endowed with a Hamiltonian action of $S^1$ with
isolated fixed points (the moment map is in this case a Morse function with
critical points of even index), such as
complex flag manifolds or toric varieties.
The class of manifolds without odd cohomology is closed with respect to products,
locally trivial fibrations with simply connected base, and, in the case
of closed manifolds, with respect to connected sums.

Theorem \ref{thm:main} proves for manifolds without odd cohomology
a conjecture of Ghys (see Question 13.1 in \cite{F}\footnote{This conjecture
was discussed in several talks by Ghys \cite{G} (I thank \'E. Ghys for this information),
but apparently it was in \cite{F} when it appeared in print for the first time.})
which states that
for any compact manifold $X$ there exists some $C\geq 1$ with the property that any finite
group $G$ acting effectively on $X$ contains an abelian subgroup of index at most $C$
(for the case in which $X$ is a sphere
this has also been conjectured by Zimmermann, see \cite{Z}). By the arguments in
\cite[\S 2.3]{M1}, Theorem  \ref{thm:main} also implies Ghys' conjecture for any
manifold admitting a compact covering without odd cohomology (e.g. even dimensional
real projective spaces).

\subsection{Some consequences of Theorem \ref{thm:main}}
For any action of a group $G$ on a manifold $X$ we denote by $G_x$ the
stabilizer of $x\in X$. Since each connected component of a manifold without odd
cohomology has nonzero Euler characteristic, Theorem \ref{thm:main} implies the following.

\begin{corollary}
\label{cor:existence-almost-fixed-point}
For any manifold $X$ without odd cohomology there exists a constant $C$
such that for any action of a finite group $G$ on $X$, any connected
component of $X$ contains a point $x$ such that $[G:G_x]\leq C$.
\end{corollary}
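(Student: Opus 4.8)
The plan is to read the corollary off from Theorem \ref{thm:main} with essentially no extra work; all that is needed is the positivity of the Euler characteristic together with a small reduction to effective actions. Set $C$ equal to the constant provided by Theorem \ref{thm:main} for the manifold $X$. Let $G$ be an arbitrary finite group acting on $X$ (not necessarily effectively) and let $K\vt G$ be the kernel of the action, so that $G/K$ acts effectively on $X$. Applying Theorem \ref{thm:main} to the $G/K$-action yields an abelian subgroup $\bar A\le G/K$ of index at most $C$ with $\chi(X_i^{\bar A})=\chi(X_i)$ for every connected component $X_i$ of $X$; let $A\le G$ denote the preimage of $\bar A$, so that $[G:A]=[G/K:\bar A]\le C$ and, since $K$ acts trivially, $X_i^A=X_i^{\bar A}$.

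Next I would observe that $\chi(X_i)>0$ for every $i$: because $X_i$ is connected and $H^*(X_i;\ZZ)$ is torsion free and supported in even degrees, $\chi(X_i)$ equals the sum of the (even-degree) Betti numbers of $X_i$, which is at least $b_0(X_i)=1$. Hence $\chi(X_i^A)=\chi(X_i)>0$, and in particular $X_i^A\neq\emptyset$. Finally, fixing any $x\in X_i^A$, every element of $A$ fixes $x$, so $A\subseteq G_x$ and therefore $[G:G_x]\le[G:A]\le C$. Since $i$ was arbitrary, this produces a point with the required property in every connected component of $X$, which is the assertion of the corollary.

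The argument has no real obstacle: the entire content is carried by Theorem \ref{thm:main}. The only point requiring a moment's care is the passage from a general action to its effective quotient, where one must check that neither the index bound nor the relevant fixed-point sets are affected by dividing out the kernel; and the verification that $\chi(X_i)\neq 0$, which is immediate from the hypothesis on the cohomology of $X$.
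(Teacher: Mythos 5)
Your proof is correct and follows the same route as the paper, which derives the corollary directly from Theorem \ref{thm:main} together with the observation that each component of a manifold without odd cohomology has nonzero (indeed positive) Euler characteristic. The reduction to an effective action by passing to the quotient by the kernel is a detail the paper leaves implicit, and you handle it correctly.
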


This corollary is nontrivial even for disks, since
disks of high enough dimension support finite group actions without
fixed points. This is a classic theme
in the theory finite transformation groups. The first example of a finite
group action on a disk with empty fixed point set was found
by Floyd and Richardson (see \cite{FR} and also \cite[Chap. I, \S8]{Br}),
and a complete characterization of which finite groups admit smooth actions
on disks without fixed points was given by Oliver
(see Theorem 7 of \cite{O}).

For actions on complex
projective spaces, one can make the following quantitative statement:
for any $C>1$ there exists some $C'$ such that if a finite group $G$
has no abelian subgroup of index at most $C'$, then $G$ acts (linearly)
on some complex projective space in such a way that the isotropy group of
any point has index at least $C$. This follows from a result of Isaacs and
Passman \cite{IP} and the fact that if a representation $G\to\GL(n,\CC)$
is irreducible then the induced action on $\CP^{n-1}$ has the property that
for any $x\in \CP^{n-1}$ we have $[G:G_x]\geq n$.

In \cite{M2} Corollary \ref{cor:existence-almost-fixed-point} has been
proved for all compact 4-manifolds with nonzero Euler characteristic.
It seems reasonable to expect Corollary \ref{cor:existence-almost-fixed-point}
to be true for all compact manifolds with nonzero Euler characteristic, but the
methods in this paper unfortunately fall short of proving such a statement:
Lemma \ref{lemma:no-odd-cohomology}, which
plays a key role in several steps of our arguments,
uses crucially the hypothesis that the manifold has no odd cohomology.

Another immediate consequence of Theorem \ref{thm:main} is the following fact:
for any manifold $X$ without odd cohomology there exists a constant $C$
such that any finite group acting effectively on $X$ can be generated by
at most $C$ elements. However, this result is not really new, as the following
more general result is known to be true\footnote{I thank A. Jaikin and E. Khukhro for explaining Theorem \ref{thm:cota-nombre-generadors} and its proof to me.}:

\begin{theorem}
\label{thm:cota-nombre-generadors}
For any compact manifold $X$ there exists a constant $C$ such that any finite group
$G$ acting effectively on $X$ can be generated by at most $C$ elements.
\end{theorem}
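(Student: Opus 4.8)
The plan is to reduce the statement to a theorem of Mann and Su on the ranks of finite groups acting effectively on a fixed compact manifold, combined with standard facts about finite groups. First I would recall the relevant notion: for a finite group $G$, let $d(G)$ denote the minimal number of generators, and let $\mathrm{rk}_p(G)$ denote the $p$-rank of $G$, that is, the largest $k$ such that $(\ZZ/p)^k$ embeds in $G$. The classical result of Mann and Su (\emph{Trans. Amer. Math. Soc.} 1963) asserts that if a compact manifold $X$ is fixed, then there is a constant $r = r(X)$ such that for every prime $p$ and every finite group $G$ acting effectively on $X$, one has $\mathrm{rk}_p(G) \le r$; the bound $r$ depends only on the dimension of $X$ and the total dimension of $H^*(X;\FF_p)$, which is in turn bounded (over all $p$) in terms of the homology of $X$ over $\ZZ$. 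So the first step is simply to invoke this and record a single constant $r$ bounding the $p$-rank of every finite $G \curvearrowright X$ for every $p$.

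The second step is purely group-theoretic: one needs to bound $d(G)$ in terms of $\sup_p \mathrm{rk}_p(G)$. This is where the classification of finite simple groups (or at least deep results depending on it) enters, in the form of a theorem of Guralnick and of Lucchini: there is a function $f$ such that any finite group $G$ all of whose Sylow subgroups can be generated by $m$ elements satisfies $d(G) \le f(m)$ — in fact $d(G) \le m+1$. (Independently, for $p$-groups, $d(P) = \mathrm{rk}_p(P)$ is elementary via the Frattini quotient, so a bound on $p$-ranks gives a bound on the number of generators of every Sylow subgroup.) Chaining these: $\mathrm{rk}_p(G) \le r$ for all $p$ implies $d(\text{Syl}_p G) \le r$ for all $p$, which implies $d(G) \le r+1 =: C$. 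Since $r$ depends only on $X$, so does $C$, and the proof is complete.

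I would present the argument in this order: (1) state and cite Mann–Su to fix $r$; (2) state and cite the Guralnick–Lucchini bound $d(G) \le d(\mathrm{Syl}(G)) + 1$ where $d(\mathrm{Syl}(G))$ is the maximum over $p$ of $d$ of a Sylow $p$-subgroup; (3) observe $d(\text{Syl}_p G) = \mathrm{rk}_p(\text{Syl}_p G) = \mathrm{rk}_p(G)$ for $p$-groups via the Frattini argument, so $d(\mathrm{Syl}(G)) \le r$; (4) conclude $d(G) \le r + 1$. The main obstacle — really the only substantive input beyond citations — is Step (2), the reduction from Sylow generation to global generation, which genuinely relies on the classification of finite simple groups; everything else is either a black-box geometric input (Mann–Su, itself proved by Smith theory and a spectral sequence / Borel fixed point argument) or elementary. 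It is worth remarking, as the paper's footnote signals, that this argument is known and not due to the author; the role of this theorem here is only to explain why the generation bound in Theorem \ref{thm:main} is not the novel part of that result.
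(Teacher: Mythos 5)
Your overall strategy coincides with the paper's: invoke Mann--Su to bound the rank of elementary abelian $p$-subgroups by some $r=r(X)$, then pass through the Sylow subgroups and apply the Guralnick--Lucchini theorem $d(G)\leq \max_p d(\mathrm{Syl}_p(G))+1$. However, your step (3) contains a genuine error: for a finite $p$-group $P$ it is \emph{not} true that $d(P)=\mathrm{rk}_p(P)$. The Burnside basis theorem identifies $d(P)$ with $\dim_{\FF_p}P/\Phi(P)$, which is the rank of an elementary abelian \emph{quotient}, not of an elementary abelian \emph{subgroup}; there is no Frattini argument converting one into the other. The quaternion group $Q_8$ already shows the failure: $d(Q_8)=2$ while its unique minimal subgroup gives $\mathrm{rk}_2(Q_8)=1$, and central products of quaternion groups (extraspecial $2$-groups of minus type of order $2^{2n+1}$) have $d=2n$ with $2$-rank only $n$. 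So a bound on the ranks of elementary abelian subgroups acting effectively does not, by itself, bound the number of generators of a Sylow subgroup, and this is exactly the step that needs a real argument.

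The paper fills this gap as follows: given a $p$-group $\Gamma$ acting effectively on $X$, take a maximal abelian normal subgroup $\Gamma_0\vt\Gamma$. Being abelian and acting effectively, $\Gamma_0$ can be generated by at most $r$ elements (an abelian $p$-group needing $d$ generators contains $(\ZZ/p)^d$). Maximality forces $\Gamma/\Gamma_0$ to embed in $\Aut(\Gamma_0)$ via conjugation, and the Gorchakov--Hall--Merzlyakov--Roseblade lemma then bounds $d(\Gamma/\Gamma_0)$ by $r(5r-1)/2$, whence $d(\Gamma)\leq r(5r+1)/2$. Only after this quadratic (not linear) bound on Sylow generation does Guralnick--Lucchini yield $C=r(5r+1)/2+1$. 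Your steps (1), (2) and (4) are fine; step (3) must be replaced by an argument of this kind.
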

\begin{proof}
By the main result in \cite{MS}, there exists an integer $r$ such that for any prime $p$
any elementary $p$-group acting effectively on $X$ has rank at most $r$.
Suppose that $\Gamma$ is a $p$-group acting effectively on $X$; let $\Gamma_0$ be a maximal
abelian normal subgroup of $\Gamma$. The action by conjugation identifies $\Gamma/\Gamma_0$
with a subgroup of $\Aut(\Gamma_0)$. Since $\Gamma_0$ can be generated by at most $r$ elements,
the Gorchakov--Hall--Merzlyakov--Roseblade lemma (see e.g. Lemma 5 in \cite{Ro}) implies
that $\Gamma/\Gamma_0$ can be generated by at most $r(5r-1)/2$ elements.
Hence, $\Gamma$ can be generated by at most $r(5r+1)/2$ elements.
According to a theorem proved independently by Guralnick and Lucchini \cite{Gu,L},
if all Sylow subgroups of a finite group $G$ can be generated by at most $k$ elements,
then $G$ itself can be generated by at most $k+1$ elements. This implies that any finite group
acting effectively on $X$ can be generated by at most $r(5r+1)/2+1$ elements.
\end{proof}

Note that both Guralnick and Lucchini base their arguments in \cite{Gu,L}
on the classification of the finite simple groups (CFSG).
The combination of Ghys' conjecture with \cite{MS} provides an alternative, more geometric approach to Theorem \ref{thm:cota-nombre-generadors}.
On the other hand, a proof of Ghys' conjecture avoiding
the CFSG would very likely provide a much more elementary
proof of Theorem \ref{thm:cota-nombre-generadors} than the one based on Guralnick and Lucchini's theorem. This is certainly true for the proofs of partial
cases of the conjecture given in \cite{M1,M2} but not for the approach in the present paper,
which uses the CFSG through the results in \cite{MT}.

\subsection{Some ingredients of the proof}
The main tool in the proof of Theorem \ref{thm:main}
is the notion of $K$-stable action of an abelian group on $X$,
where $K$ is a positive real number. An action of an abelian $p$-group $A$ on
a manifold $X$
is $K$-stable if {\it (i)} the action of $A$ on $H^*(X;\FF_p)$ is unipotent, {\it (ii)}
$\chi(X^{A_0})=\chi(X)$ for any subgroup $A_0$ of $A$, and {\it (iii)}
for any subgroup $A_0\subseteq A$ of index smaller than $K$ each connected component of $X^A$ is
a connected component of $X^{A_0}$. An action of an abelian group $A$ on $X$ is $K$-stable
if for every prime $p$ the action of the $p$-part $A_p\subseteq A$ is $K$-stable.
When the action of $A$ is clear from the context, we simply say that $A$ is $K$-stable
(e.g., if a group $G$ acts on $X$, we say that a subgroup $A\subseteq G$ is $K$-stable
if the restriction of the action to $A$ is $K$-stable).

We prove in this paper several properties enjoyed by this notion
in the context of actions on manifolds $X$ without odd cohomology:
{\it (a)} for any $K$ there
exists some $\Lambda(K)\geq 1$ 
such that any abelian group $A$ acting on $X$
has a $K$-stable subgroup of index at most $\Lambda(K)$ (Theorem \ref{thm:existence-stable-subgroups} --- this is valid on arbitrary compact manifolds);
{\it (b)} there exists
some number $K_{\chi}$ 
such that for any $K\geq K_{\chi}$ and any $K$-stable
action of an abelian group $A$ on $X$ we have $\chi(X^A)=\chi(X)$
(Theorem \ref{thm:good-Gamma-has-gamma}); {\it (c)} if
$X$ is connected then there exists
some constant $e$ 
such that: if $K>e$ and $A_1,\dots,A_s$ are $K$-stable abelian subgroups of
a group $G$ acting on $X$
and satisfying $X^{A_1}\cap\dots\cap X^{A_s}\neq\emptyset$, then there
exists a $(K/e)$-stable abelian subgroup $A\subseteq G$ such that
$X^{A_1}\cap\dots\cap X^{A_s}=X^A$ (this is
Theorem \ref{thm:intersection-stabilizes-any-group}; we emphasize that $s$ is arbitrary).
The function $K\mapsto\Lambda(K)$ and the constants $K_{\chi}$ and $e$ in these statements
depend only on $X$.

These results use in an essential way the
smoothness of the group actions,
through the following consequences:
(1) any smooth action of a finite group $G$ on $X$
is locally linear (this is Lemma \ref{lemma:linearization}), so in particular in a neighborhood of any point of $X$ fixed
by a subgroup $\Gamma\subseteq G$ the action of $\Gamma$ is equivalent to
a linear representation of $\Gamma$, and (2)
the isomorphism class of this representation is locally constant on $X^{\Gamma}$
(this is used in Lemma \ref{lemma:existeix-element-generic-p}).
Besides, the smoothness of the action is used in this paper
through the existence of equivariant triangulations,
e.g. in Lemma \ref{lemma:one-big-stabiliser}.

Properties {\it (a)} and {\it (b)} imply, with some additional work, the following.

\begin{theorem}
\label{thm:main-abelian}
Let $X$ be a 
manifold without odd
cohomology and with connected components $X_1,\dots,X_r$.
There exists an integer $C_a\geq 1$ such that any finite abelian group
$A'$ acting effectively on $X$ contains a subgroup $A$ of index at most $C_a$ satisfying
$\chi(X_i^{A})=\chi(X_i)$ for every $i$.
Furthermore $A$ can be generated by at most $\sum_i[\dim X_i/2]$ elements.
\end{theorem}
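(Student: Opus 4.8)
The plan is to reduce the problem to a single connected component at a time and then combine the results, using properties (a) and (b) of $K$-stable actions as the main engine. First I would fix the constant $K = K_\chi$ provided by property (b) (Theorem \ref{thm:good-Gamma-has-gamma}), so that any $K_\chi$-stable abelian action on $X$ has $\chi(X^A)=\chi(X)$; applying this on each connected component $X_i$ separately (the notion of $K$-stability is defined primewise and componentwise through the fixed-point and Euler-characteristic conditions) gives $\chi(X_i^A)=\chi(X_i)$ for every $i$ as soon as $A$ is $K_\chi$-stable. Then, given an arbitrary finite abelian group $A'$ acting effectively on $X$, property (a) (Theorem \ref{thm:existence-stable-subgroups}) produces a $K_\chi$-stable subgroup $A\subseteq A'$ of index at most $\Lambda(K_\chi)$. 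Taking $C_a = \Lambda(K_\chi)$ (possibly enlarged, see below) settles the index bound and the Euler characteristic equalities.

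The remaining point is the bound on the number of generators: I need $A$ to be generated by at most $\sum_i [\dim X_i/2]$ elements. This is where I expect the main work to lie, and the natural approach is to bound, for each prime $p$, the rank of the $p$-part $A_p$ of $A$ by $\sum_i [\dim X_i/2]$, since an abelian group whose $p$-parts all have rank $\le d$ can be generated by $d$ elements. Because $A$ is $K_\chi$-stable, $A_p$ has a fixed point $x$ in each component $X_i$ (as $\chi(X_i^{A_p})=\chi(X_i)\neq 0$, using that components of manifolds without odd cohomology have nonzero Euler characteristic), and by local linearity (Lemma \ref{lemma:linearization}) the action of $A_p$ near $x$ is a faithful-on-a-quotient linear representation on $\RR^{\dim X_i}$; to upgrade this to faithfulness of $A_p$ itself I would intersect over the finitely many components, or rather argue that the kernels of these local representations intersect trivially because $A'$ — hence $A$ — acts effectively on $X$. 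An abelian $p$-subgroup of $\O(n,\RR)$ acting with a nonzero fixed vector has rank at most $[n/2]$ (it is simultaneously block-diagonalizable into $2\times 2$ rotation blocks and $\pm 1$ eigenvalues on the fixed complement; the $p$-group structure forces each nontrivial block to be genuinely two-dimensional for odd $p$, and for $p=2$ one uses that the $-1$ eigenspaces can be organized so that the rank is still at most $[n/2]$ after accounting for the fixed vector). Summing the component-wise bounds $[\dim X_i/2]$ gives the rank bound on $A_p$, uniformly in $p$, and hence the generator bound on $A$.

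The one subtlety I anticipate is the $p=2$ case of the linear-algebra lemma and the passage from "effective on $X$" to "effective near the chosen fixed points": a priori $A_p$ might act nontrivially on $X_i$ while fixing the germ at $x$ is not enough, so I would instead choose, for each $i$, a point $x_i \in X_i^{A_p}$ and let $\rho_i\colon A_p \to \GL(\dim X_i,\RR)$ be the isotropy representation; then $\bigcap_i \Ker\rho_i$ acts trivially on a neighborhood of $\{x_i\}$ in each component, and since $X$ has no odd cohomology one can invoke the standard fact (or Lemma \ref{lemma:no-odd-cohomology}-type reasoning) that an element acting trivially on a nonempty open set of a connected manifold and lying in a finite group action must act trivially on that component — giving $\bigcap_i \Ker\rho_i = 1$ and so $\operatorname{rank} A_p \le \sum_i \operatorname{rank} \rho_i(A_p) \le \sum_i [\dim X_i/2]$. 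If this effectivity-localization step needs the action to be, say, analytic rather than merely smooth, I would instead absorb it into the choice of $C_a$: replace $A$ by a further subgroup of bounded index on which the representations $\rho_i$ are jointly faithful, which is automatic once $A$ is $K_\chi$-stable because stability already pins down the fixed-point components. Either way, the generator bound follows, and choosing $C_a$ to be $\Lambda(K_\chi)$ times whatever finite index is lost in this last adjustment completes the proof.
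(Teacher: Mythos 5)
Your overall strategy (pass to a $K$-stable subgroup, use Theorem \ref{thm:good-Gamma-has-gamma} for the Euler characteristics, then linearize at fixed points to control generators) is the same as the paper's, but the generator bound as you argue it has a genuine gap. The linear-algebra claim you rely on --- that an abelian $p$-subgroup of $\O(n,\RR)$ with a nonzero fixed vector has rank at most $[n/2]$ --- is false for $p=2$: the diagonal subgroup $\{1\}\times\{\pm 1\}^{n-1}$ fixes $e_1$ and has rank $n-1>[n/2]$ once $n\geq 4$. No amount of ``organizing the $-1$ eigenspaces'' repairs this; real characters genuinely allow rank up to $n$. What is actually true (Theorem \ref{thm:abelian-generat}) is only that a finite abelian subgroup of $\GL(n,\RR)$ has a \emph{further subgroup of index at most $C_n$} isomorphic to a subgroup of $(S^1)^{[n/2]}$. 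So you must shrink $A$ again after the stabilization step, and here your proposed fix --- ``absorb the lost index into $C_a$'' --- breaks the other half of the theorem: once you pass to a subgroup $A_0\subseteq A$ of index possibly larger than $K_\chi$, you lose the guarantee that $X^{A_0}=X^{A}$, hence the conclusion $\chi(X_i^{A_0})=\chi(X_i)$ no longer follows. The paper resolves exactly this tension by choosing $K:=\max\{1+\prod_i C_{\dim X_i},\,K_{\chi}\}$ \emph{from the start}, so that the final shrinking by index at most $\prod_i C_{\dim X_i}<K$ falls under Lemma \ref{lemma:good-subgroup-of-stable} and preserves the fixed-point set. (Alternatively, one could rescue a rank argument for the $2$-part by invoking condition (2) of $K_\chi$-stability, which forbids characters of kernel-index $<K_\chi$ and in particular real characters in the normal directions; but that is not the argument you wrote.)

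Two smaller points. First, Theorem \ref{thm:existence-stable-subgroups} is stated for CT actions, so you must first pass to a CT subgroup of bounded index via Lemma \ref{lemma:Minkowski}; you apply it directly to an arbitrary effective action. Second, your worry about needing analyticity for the joint faithfulness of the isotropy representations is unfounded: the argument in part (2) of Lemma \ref{lemma:linearization} already shows that an element acting trivially on $T_{x_i}X_i$ acts trivially on the whole component $X_i$, so $\bigcap_i\Ker\rho_i=\{1\}$ follows from effectiveness alone. Your idea of bounding the rank of each $p$-part and summing over components is a legitimate (and arguably cleaner) alternative to the paper's explicit generating-set construction with the filtration $B_i=A\cap\Ker\pi_1\cap\dots\cap\Ker\pi_i$ --- but only once the index bookkeeping above is done correctly.
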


For cyclic groups, Theorem \ref{thm:main-abelian} follows from
Lemma \ref{lemma:Minkowski} (which reduces the question to actions which are trivial on
cohomology) and Lemma \ref{lemma:Euler-fixed-point-set-cyclic} (which is a
standard analogue of Lefschetz's fixed point theorem).
For noncyclic abelian groups the situation
is more complicated: an action of an abelian finite group $A$ on a manifold $X$ which is
trivial in cohomology need not satisfy $\chi(X^A)=\chi(X)$ (example: take $A$ to be
the group of $3\times 3$ diagonal matrices with determinant $1$ and entries $\pm 1$, acting
linearly on the unit sphere in $\RR^3$). For much deeper examples see Section
4 in \cite{Pa}.

For any finite group $G$ let $\pi(G)$ denote the set of prime divisors of $|G|$.
Using properties {\it (a)}, {\it (b)} and {\it (c)} and an induction argument we
prove the following.

\begin{theorem}
\label{thm:bounded-primes}
Let $X$ be a connected
manifold without odd cohomology and let $r\in\NN$. There exists an integer
$C(X,r)\geq 1$ such that any finite group $G$ acting effectively on $X$ and satisfying
$|\pi(G)|\leq r$ contains an abelian subgroup $A\subseteq G$ of index at most $C(X,r)$
which can be generated by at most $[\dim X/2]$ elements.
\end{theorem}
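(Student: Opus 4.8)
By Theorem~\ref{thm:main-abelian} it suffices to produce a single abelian subgroup $A\subseteq G$ whose index is bounded in terms of $X$ and $r$: once such an $A$ is found, applying Theorem~\ref{thm:main-abelian} to the effective action of $A$ on the connected manifold $X$ replaces $A$ by a subgroup of index at most $C_a$ in $A$ that is generated by at most $[\dim X/2]$ elements. I would obtain such an $A$ by induction on $\dim X$, keeping track of the fact that the bound on the index depends only on $\dim X$, on the total mod-$p$ Betti number of $X$ (uniformly in $p$), and on $r$; this uniformity is essential, because the submanifolds arising below have strictly smaller dimension but, by Smith theory, no larger Betti numbers. Fix a large constant $K$, depending only on $X$ and $r$, with $K/e^{\,r}\geq K_\chi$, so that Theorems~\ref{thm:good-Gamma-has-gamma} and \ref{thm:intersection-stabilizes-any-group} still apply after $K$ has been divided by $e$ a bounded number of times.

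Call $A\subseteq G$ \emph{good} if it is nontrivial, abelian, $K$-stable, and $X^A\neq X$. Suppose first that $G$ has no good subgroup. Then by Theorem~\ref{thm:existence-stable-subgroups} every abelian subgroup $B\subseteq G$ has order at most $\Lambda(K)^r$: each $p$-part of $B$ has a $K$-stable subgroup of index at most $\Lambda(K)$, and that subgroup — abelian, $K$-stable, with no good subgroup available — must act trivially on $X$, hence is trivial. Since a $p$-group of order $p^n$ contains an abelian subgroup of order at least $p^{c\sqrt n}$ for an absolute constant $c>0$, a short computation shows that each Sylow subgroup of $G$ has order bounded in terms of $\Lambda(K)$; as $|G|$ is the product of the orders of its Sylow subgroups and $|\pi(G)|\leq r$, the order of $G$ itself is bounded in terms of $\Lambda(K)$ and $r$, and $A=\{1\}$ does the job.

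Now suppose $G$ has a good subgroup, and choose one, $A_0$, whose fixed-point set $Y:=X^{A_0}$ is minimal among the fixed-point sets of good subgroups. By Theorem~\ref{thm:good-Gamma-has-gamma}, $\chi(Y)=\chi(X)\neq 0$, so $Y\neq\emptyset$; and since $A_0\neq 1$ acts effectively while $Y\neq X$, the set $Y$ is a proper closed submanifold of the connected manifold $X$, whence $\dim Y<\dim X$. By Lemma~\ref{lemma:no-odd-cohomology} (Smith theory, applied prime by prime to $A_0$, whose action on $\FF_p$-cohomology is unipotent because $A_0$ is $K$-stable), every component of $Y$ has no odd cohomology, the total mod-$p$ Betti number of $Y$ is at most that of $X$, and $Y$ has a bounded number of components. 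I would then pass to a component $Y_1$ of $Y$ and consider its stabiliser $H\subseteq G$, which acts on $Y_1$ with kernel $L\supseteq A_0$; the quotient $\bar H:=H/L$ acts effectively on the connected manifold $Y_1$, which has no odd cohomology, smaller dimension, and controlled Betti numbers, and $|\pi(\bar H)|\leq r$. Since $L$ fixes $Y_1$ pointwise and $X$ is connected, the slice representation at a point of $Y_1$ embeds $L$ faithfully into an orthogonal group whose rank is the codimension of $Y_1$, so $L$ has an abelian subgroup of bounded index. The inductive hypothesis gives an abelian subgroup $\bar A$ of $\bar H$ of bounded index, which — after arranging it to be $K$-stable for the action on $Y_1$ (using that the isomorphism type of the relevant slice representations is locally constant, Lemma~\ref{lemma:existeix-element-generic-p}) and lifting it, with the help of Theorem~\ref{thm:intersection-stabilizes-any-group} to keep control of the associated fixed-point sets inside $X$ — I would combine with the structure of $L$ to produce an abelian subgroup of $H$ of bounded index.

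The main obstacle is twofold. First, one must bound $[G:H]$, i.e. the number of $G$-conjugates of $A_0$ (the number of components of $Y$ being already controlled). Here I would use the minimality of $Y$ together with Theorem~\ref{thm:intersection-stabilizes-any-group}: if two $G$-conjugates of $A_0$ had intersecting fixed-point sets, that intersection would be the fixed-point set of a $(K/e)$-stable abelian subgroup of $G$, and — $K/e$ still being large and $Y$ minimal — it could not be a proper subset of a conjugate of $Y$; hence the $G$-conjugates of $Y$ are pairwise disjoint, which I would feed, together with the constraint $\chi(gY)=\chi(X)\neq 0$ and the fact that finitely many proper closed submanifolds cannot cover the connected manifold $X$, into bounding the number of conjugates in terms of $\dim X$, the Betti numbers of $X$, and $r$. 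Second, the lift in the previous paragraph a priori yields only an extension of an abelian group by $L$, and turning this into a genuinely abelian subgroup of bounded index again requires careful use of Theorems~\ref{thm:existence-stable-subgroups}--\ref{thm:intersection-stabilizes-any-group} and of the choice of $A_0$, so that the $p$-parts of all the subgroups in play stay $K$-stable and their common fixed-point sets behave well under intersection. The base case $\dim X=0$ is immediate, as $G$ then embeds in a symmetric group of bounded order. I expect the control of the conjugacy class of $A_0$ to be the genuine crux.
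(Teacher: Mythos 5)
Your strategy---induction on $\dim X$ by passing to the fixed-point set $Y=X^{A_0}$ of a minimal ``good'' subgroup---is genuinely different from the paper's, which inducts on the $p$-depth of a $K$-acceptable subgroup (Subsection \ref{ss:p-depth}) and never leaves the ambient manifold $X$. The first gap is that $Y$ need not be a manifold without odd cohomology, so the inductive hypothesis cannot be applied to a component $Y_1$. Lemma \ref{lemma:no-odd-cohomology} does not assert what you attribute to it: what Smith theory (Corollary \ref{cor:cohom-no-augmenta} combined with $\chi(Y)=\chi(X)$) yields is that $H^*(X^{A_{0,p}};\FF_p)$ is concentrated in even degrees for each prime $p$ dividing $|A_0|$; it says nothing about $H^*(Y;\FF_q)$ for the other primes $q\in\pi(\bar H)$ that the induction needs, nor about integral torsion, and statement (2) of Theorem \ref{thm:accions-unipotents} only propagates $\FF_p$-unipotence for the prime of the acting group, so even the hypotheses of Smith theory for the induced action on $Y_1$ are not guaranteed. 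Making this induction run would require reformulating the theorem for a class of manifolds provably closed under the fixed-point operation, with constants depending only on dimension and a uniform Betti bound --- a substantial restructuring, not a verification.

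The second gap is the one you correctly identify as the crux: bounding the number of $G$-conjugates of $A_0$. The mechanism you sketch --- pairwise disjoint conjugates of $Y$, each with $\chi=\chi(X)\neq 0$, inside the connected manifold $X$ --- does not bound their number (arbitrarily many disjoint points in a surface each have Euler characteristic $1$). The paper's resolution is exactly the Jordan-style counting that replaces this: one does not bound the conjugates directly, but shows via an equivariant triangulation that $\chi(X)-\chi\bigl(\bigcup_{F\in\fF}F\bigr)$ is divisible by $|R|/\GCD(|R|,f)$ (Lemma \ref{lemma:diferencia-divisible}), bounds the number of $R$-orbits of classes by $r$ (Lemma \ref{lemma:l-at-most-r}), and extracts from the resulting Diophantine constraint (Lemma \ref{lemma:equacio-diofantina}) one class whose stabilizer has bounded index. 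Finally, your last step produces only an extension of an abelian group $\bar A$ by the kernel $L$, and abelian-by-abelian groups (extraspecial $p$-groups, for instance) need not contain abelian subgroups of bounded index, so the promised ``combination'' is itself a missing argument rather than a routine one. The degenerate case (no good subgroup) is essentially fine once you first reduce to CT actions via Lemma \ref{lemma:Minkowski}, but the three points above are precisely where the paper's $K$-acceptable/depth machinery does the real work, and as written the proposal does not close them.
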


See Subsections \ref{ss:review-previous-results} and \ref{ss:induction-scheme}
for an overview of the proof of Theorem \ref{thm:bounded-primes}.

The case $r=1$ in Theorem \ref{thm:bounded-primes} is substantially easier than the
case $r\geq 2$, and it can be proved under the weaker hypothesis that $\chi(X)$ is nonzero
using a fixed point argument (see the comments after Lemma \ref{lemma:one-big-stabiliser}).

The cases $r=1,2$ of Theorem \ref{thm:bounded-primes}
combined with the existence of Hall
subgroups of finite solvable groups imply
easily Theorem \ref{thm:main} for finite solvable groups. Perhaps surprisingly,
the cases $r=1,2$  also allow us to deduce
the full statement of Theorem \ref{thm:main}, at least if one uses the classification
theorem of finite simple groups.
This is based on the following result, proved in \cite{MT}:

\begin{theorem}
\label{thm:MT}
Let $\CCC$ be a collection of finite groups which is closed under taking subgroups
and let $d$, $M$ be positive integers satisfying
the following condition: for any $G \in \CCC$ whose cardinal is divisible
by at most two different primes there exists an abelian
subgroup $A$ of $G$ such that $A$ can be generated by at most $d$ elements and
$[G : A] \le M$.
Then there exists a constant $C_0$ such that any $G\in \CCC$ has
an abelian subgroup of index at most $C_0$ which can be generated by at most $d$
elements.
\end{theorem}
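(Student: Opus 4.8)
The plan is to fix $\CCC$, $d$, $M$ satisfying the hypothesis and to produce a bound $C_0=C_0(d,M)$ by analyzing $G\in\CCC$ through its generalized Fitting subgroup. Two elementary observations drive everything. First, the generator bound \emph{localizes}: a finite abelian group is generated by $\le d$ elements if and only if each of its Sylow subgroups is, which is precisely why $d$ can be held fixed while only the index bound grows. Second, least abelian indices are \emph{multiplicative across coprime parts}: if $\gcd(|P|,|Q|)=1$ then the least index $m(P\times Q)$ of an abelian subgroup equals $m(P)\,m(Q)$. Since $\CCC$ is subgroup-closed, every subgroup of every $G\in\CCC$ whose order has at most two prime divisors is controlled with index $\le M$; feeding this into the two observations already forces strong restrictions. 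A $p$-subgroup is one-prime, hence directly has an abelian subgroup of index $\le M$ generated by $\le d$ elements; for a nilpotent $G=P_1\times\cdots\times P_s$, applying the second observation to the various two-prime subgroups $P_i\times P_j$ (and using that for odd $p$ a nonabelian $p$-group has $m$ a positive power of $p$, while an abelian $p$-group in $\CCC$ has bounded rank) forces all but boundedly many of the $P_i$ to be abelian and generated by $\le d$ elements, the remaining finitely many having $m(P_i)\le M$; the localization observation then assembles the per-prime pieces into an abelian subgroup of $G$ of bounded index generated by $\le d$ elements.

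Next I would treat solvable groups, the first place where the \emph{two}-prime hypothesis is genuinely exploited, via \emph{Hall} subgroups: a finite solvable group has a Hall $\{p,q\}$-subgroup for every pair of primes, and these lie in $\CCC$ and are controlled. (Note that by Burnside's $p^aq^b$-theorem every two-prime group is already solvable, so the hypothesis constrains only the solvable subgroups of members of $\CCC$; propagating it further will require CFSG.) By induction on the Fitting height, and using conjugacy and the covering properties of Hall subgroups to patch the abelian subgroups supplied for the different pairs of primes into a single one, I would show that a solvable $G\in\CCC$ has an abelian subgroup of bounded index generated by $\le d$ elements. I expect this patching — turning the family of local data over all pairs of primes into one genuinely global abelian subgroup — to be the main obstacle on the solvable side; it is essentially a statement in the local-to-global theory of solvable groups, and it is where elementary number theory enters, in the shape "a positive integer all of whose prime divisors and all of whose prime-power parts are $\le M$ is itself bounded in terms of $M$."

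Finally I would treat an arbitrary $G\in\CCC$ using $F^*(G)=F(G)E(G)$ and the classification of finite simple groups. Since $C_G(F^*(G))\subseteq F^*(G)$, it suffices to bound: (i) the nilpotent radical $F(G)$ together with the $G$-action on it, which is handled by the solvable analysis applied inside $G$; (ii) the orders of the nonabelian composition factors $S_i=L_i/Z(L_i)$ of the layer $E(G)=L_1\cdots L_k$; and (iii) the number $k$ of components — equivalently, since (ii) leaves only boundedly many isomorphism types, the multiplicity of each type — after which $G/F^*(G)\hookrightarrow\operatorname{Out}(F^*(G))$ is itself controlled (for the $E(G)$-part this uses that $\operatorname{Out}$ of a simple group is solvable, which is Schreier's conjecture, a consequence of CFSG). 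For (ii) one runs through the CFSG families and exhibits in every nonabelian simple group of large order a one- or two-prime subgroup violating the hypothesis: a large elementary abelian $p$-subgroup — the unipotent radical of a suitable parabolic for groups of Lie type, or, for alternating groups $A_n$, simply a Sylow $2$-subgroup of a sub-symmetric-group $S_{2^k}$ (an iterated wreath product whose least abelian index grows with $k$) — together, in the Lie-type case, with a prime-power-order subgroup of a torus acting on it with few fixed points, which forces the least index of an abelian subgroup generated by $\le d$ elements to blow up; Zsygmondy's theorem on primitive prime divisors supplies the required torus primes. For (iii), many components of a fixed isomorphism type yield inside $G$ a transitive $2$-subgroup on the blocks (using an involution from each factor), and its semidirect product with the block-permutation action is a one-prime group of unbounded least abelian index, again contradicting the hypothesis. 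The overall main obstacle is to organize this classification casework uniformly and to control simultaneously the interaction of $F(G)$, $E(G)$ and $G/F^*(G)$ so that the separate bounds merge into a single abelian subgroup of index $\le C_0$ generated by $\le d$ elements.
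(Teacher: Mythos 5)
First, a point of reference: the paper does not prove Theorem \ref{thm:MT} at all --- it is quoted verbatim from \cite{MT} (Mundet i Riera--Turull) and used as a black box in Section \ref{s:proof-thm:main}. So there is no in-paper proof to compare your attempt against; what follows is an assessment of your proposal on its own terms.

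Your outline has the right overall architecture for a result of this type (reduce along nilpotent $\to$ solvable $\to$ general via $F^*(G)$, and use CFSG to bound the nonabelian composition factors by exhibiting, in any large simple group, a subgroup with at most two prime divisors that violates the hypothesis), and your two ``elementary observations'' (localization of the generator bound over Sylow subgroups, multiplicativity of the least abelian index over coprime direct factors) are correct and genuinely useful: they do settle the nilpotent case. But as written this is a plan, not a proof. The two places you yourself flag as ``main obstacles'' are exactly where all of the content lives, and neither is carried out: (i) the local-to-global patching of the abelian subgroups supplied by the various Hall $\{p,q\}$-subgroups of a solvable group into a single abelian subgroup of bounded index is a serious difficulty (the $A_{ij}$ for different pairs need not be compatible under conjugation, and induction on Fitting height by itself does not resolve this); and (ii) the uniform CFSG casework, together with the merging of the bounds for $F(G)$, $E(G)$ and the quotient, is the bulk of \cite{MT}.

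Two specific steps are also wrong as stated. First, the claim that after bounding the components ``$G/F^*(G)\hookrightarrow\operatorname{Out}(F^*(G))$ is itself controlled'' fails: $F(G)$ can be a huge abelian $p$-group, in which case $\operatorname{Out}(F^*(G))$ contains a correspondingly huge general linear group, and Schreier's conjecture says nothing about this part. Controlling the action of $G$ on $F(G)$ is precisely the crux of the solvable case, which you deferred. Second, for bounding the number $k$ of components of $E(G)$, there is no reason a $2$-subgroup of $G$ acts transitively on the blocks, nor that the ``semidirect product with the block-permutation action'' sits inside $G$; fortunately the intended conclusion follows more simply --- commuting involutions $t_1,\dots,t_k$, one from each component, generate an elementary abelian $2$-subgroup of rank essentially $k$, which lies in $\CCC$ and forces $2^{k-d}\le M$. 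In summary: the skeleton is sound and consistent with how such theorems are proved, but the proposal leaves the hardest steps unexecuted and contains at least one incorrect reduction, so it does not constitute a proof of Theorem \ref{thm:MT}.
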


Details of how to deduce
Theorem \ref{thm:main} from Theorem \ref{thm:bounded-primes}
and Theorem \ref{thm:MT} are given in Section \ref{s:proof-thm:main}.

Since an effective action of $G$ on a manifold $X$ is the same as an inclusion of groups
$G\hookrightarrow \Diff(X)$, the statement of Ghys' conjecture can be seen as an
analogue for diffeomorphism groups of the following classic theorem of Jordan.

\begin{theorem}[Jordan, \cite{J}]
\label{thm:Jordan-classic} For any $n\in \NN$ there exists some
constant $C$ such that any finite subgroup of $\GL(n,\CC)$ has an
abelian subgroup of index at most $C$.
\end{theorem}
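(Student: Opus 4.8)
The plan is to prove Jordan's theorem by induction on $n$, the key geometric input being the classical fact that a complex representation decomposes into irreducibles, so that it suffices to bound the index of an abelian subgroup when the given finite group $G\subseteq\GL(n,\CC)$ acts irreducibly on $\CC^n$. First I would set up the induction: if the representation is reducible, say $\CC^n=V_1\oplus V_2$ with $\dim V_i=n_i<n$, then $G$ embeds into $\GL(n_1,\CC)\times\GL(n_2,\CC)$, and applying the inductive hypothesis to each factor produces an abelian subgroup of $G$ of index at most $C(n_1)\cdot C(n_2)$; so we may assume the action is irreducible. The heart of the argument is then the irreducible case, where one studies how close elements of $G$ can be to the identity.

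The main step is a near-commutation estimate. Equip $\GL(n,\CC)$ with the operator norm and consider the neighborhood $U=\{g:\|g-\Id\|<1/2\}$ of the identity. The key lemma — the technical core of Jordan's theorem — is that there is an explicit $\delta>0$ (depending only on $n$) such that if $g,h\in G$ both lie in the $\delta$-ball around $\Id$, then $g$ and $h$ commute: one estimates $\|ghg^{-1}h^{-1}-\Id\|$ in terms of $\|g-\Id\|$ and $\|h-\Id\|$, using that the commutator is "quadratically small", and combines this with the observation that the finite subgroup generated by elements very near $\Id$ cannot contain a nontrivial element too close to $\Id$ unless it is already the identity (because powers of an element close to but distinct from $\Id$ eventually leave a fixed small ball). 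Averaging over $G$ to put a $G$-invariant Hermitian metric on $\CC^n$ lets us assume $G\subseteq\U(n)$, which makes these norm estimates clean and makes $\U(n)$ compact.

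From the near-commutation lemma I would let $H\subseteq G$ be the subgroup generated by $G\cap B_\delta(\Id)$; by the lemma $H$ is abelian. To bound $[G:H]$ I would use compactness of $\U(n)$: cover $\U(n)$ by finitely many $\delta/2$-balls, say $N=N(n)$ of them; if two elements $g_1,g_2\in G$ lie in the same ball then $g_1g_2^{-1}\in G\cap B_\delta(\Id)\subseteq H$, so each coset of $H$ meets at most... more precisely the $N$ balls show $G$ is covered by at most $N$ cosets of $H$, hence $[G:H]\le N(n)$. Taking $C(n)=\max\{N(n),\ \max_{n_1+n_2=n}C(n_1)C(n_2)\}$ (with $C(1)=1$, since $\GL(1,\CC)$ is abelian) completes the induction.

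The step I expect to be the main obstacle is making the near-commutation lemma fully rigorous with a clean, self-contained constant: one must control not just a single commutator but rule out that repeatedly commuting or multiplying small elements can slowly build up a nontrivial element of $G$ that stays inside $B_\delta(\Id)$ forever. The standard resolution is to show the subgroup generated by $G\cap B_\delta(\Id)$ has trivial commutator subgroup by a direct estimate: $[g,h]$ is within, say, $4\|g-\Id\|\,\|h-\Id\| < 2\delta\cdot\|h-\Id\|$ of $\Id$, which (for $\delta<1/4$) is strictly closer to $\Id$ than $h$ is; iterating, any iterated commutator is forced arbitrarily close to $\Id$ and hence, by finiteness of $G$ together with the fact that $B_\epsilon(\Id)\cap\U(n)$ contains no nontrivial finite-order element for small $\epsilon$, must equal $\Id$. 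Once this is in place the rest is routine compactness bookkeeping.
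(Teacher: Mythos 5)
The paper gives no proof of Theorem \ref{thm:Jordan-classic}: it is quoted as a classical result, with references to \cite{J,BW,CR,M1} and to Breuillard's exposition \cite{Bu} of Jordan's original Sylow-theoretic argument (which is the proof the paper's own methods are modelled on). Your sketch instead follows the metric (Bieberbach--Frobenius) route. That route does work, but the step you yourself flag as the main obstacle is exactly where your argument has a genuine gap.

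The gap is in deducing that $H=\langle G\cap B_\delta(\Id)\rangle$ is abelian. The estimate $\|[g,h]-\Id\|\le 2\|g-\Id\|\,\|h-\Id\|$ shows only that \emph{deeply iterated} commutators of elements of $A:=G\cap B_\delta(\Id)$ tend to $\Id$ and are therefore eventually trivial because $G$ is finite. It does not show that a single commutator $[g,h]$ with $g,h\in A$ is trivial: $[g,h]$ is only known to lie within $2\delta^2$ of $\Id$, whereas the minimal distance from $\Id$ to $G\setminus\{\Id\}$ depends on $G$ and can be arbitrarily small. Moreover, ``all sufficiently deep iterated commutators of the generators vanish'' does not imply ``the generated group is abelian'' --- any finite nonabelian nilpotent group is a counterexample to that implication. (Your auxiliary claim that $B_\epsilon(\Id)\cap\U(n)$ contains no nontrivial finite-order element is also false, as $e^{2\pi i/N}\Id$ shows; the correct no-small-subgroups statement is that no nontrivial \emph{subgroup} is contained in a small ball.) The standard repair is Frobenius's argument: among all non-commuting pairs in $A$ choose $(g,h)$ minimizing $\|g-\Id\|+\|h-\Id\|$; for $\delta<1/2$ the estimate forces $k=[g,h]\in A$ to commute with both $g$ and $h$, hence $k$ is central in $\langle g,h\rangle$ and $[g^m,h]=k^m$ for all $m$; unitarity then gives $\|k^m-\Id\|\le 2\|g^m-\Id\|\,\|h-\Id\|\le 4\delta$ for every $m$, which for $\delta<\sqrt{3}/4$ contradicts the fact that some power of any nontrivial unitary matrix lies at distance at least $\sqrt{3}$ from $\Id$. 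So $k=\Id$, any two elements of $A$ commute, and $H$ is abelian. With that lemma in place your covering bound $[G:H]\le N(n)$ is fine; note also that this argument never uses irreducibility, so your inductive reduction to the irreducible case is unnecessary.
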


(See also \cite{BW,CR,M1}.)
The proof of Theorem \ref{thm:bounded-primes} has some remarkable similarities to
Jordan's original proof of Theorem \ref{thm:Jordan-classic},
which is rather different from the usual modern
proofs (a beautiful and very clear exposition
of Jordan's proof is given in \cite{Bu}). In particular, our notion of $K$-stable action
of an abelian group $A$ on a manifold $X$ is closely related to the notion of
$M$-beam in \cite{Bu}.
Also, Lemma \ref{lemma:equacio-diofantina}, which completes the induction step in the proof
of Theorem \ref{thm:bounded-primes}, is exactly the same as Lemma 4.1 in \cite{Bu}.

\subsection{Relation to other work}
For other partial results on Ghys' conjecture see \cite{M1,M2,M3}.
The strategy to prove the main result in
\cite{M2} is very similar to that of
Theorem \ref{thm:bounded-primes} but the setting in \cite{M2} is much simpler,
so it might help
the reader to have a look at \cite{M2} before delving into the details
of the present paper. Two important differences between \cite{M2} and the present
paper are that the notion of $K$-stability is replaced in \cite{M2} by the similar
notion of $C$-rigidity, and that \cite{M2} does not rely on \cite{MT} (so in particular
\cite{M2} does not use the classification of finite simple groups).

In \cite{M3} we prove Ghys' conjecture for all compact manifolds with nonzero Euler
characteristic, for homology spheres, and for all open contractible manifolds.
There is some overlap between the results of the present paper and \cite{M3}, and one
could obtain a complete proof of Theorem \ref{thm:main} avoiding the use of
Theorem \ref{thm:bounded-primes} (which is proved in Section \ref{s:proof-thm:bounded-primes})
and invoking instead the main result in \cite{M3}.
However the arguments in Section \ref{s:proof-thm:bounded-primes}, which are completely
different from those in \cite{M3}, give a much stronger geometric result (independent of CFSG) than the geometric results in \cite{M3}.
Roughly speaking, both here and in \cite{M3} the partial cases of Ghys' conjecture are proved
combining a {\it geometric part}
and an {\it algebraic part}, but the balance between the two parts differ in the two papers.
In concrete terms, while
the {\it algebraic part} in \cite{M3} is the main result in \cite{MT} in its
strongest form, the {\it algebraic part} here is only a weaker form of \cite{MT},
namely Theorem \ref{thm:MT} above.
Conversely, the {\it geometric part} in
\cite{M3} is a much weaker result than the one in the present paper: while in \cite{M3}
we only manage to deal with groups whose cardinal is only divisible by two different
primes, here
we prove a result for an arbitrary number of prime divisors
(Theorem \ref{thm:bounded-primes}).
It is conceivable
that a further development of the ideas in the present paper might lead to a proof
which does not use the CFSG, while this is much more unlikely for
the arguments in \cite{M3}.

Furthermore, the methods in \cite{M3} do not give any control on the Euler characteristic of the fixed point set of abelian subgroups of small index, so they do not allow to prove the analogue
of Corollary \ref{cor:existence-almost-fixed-point} for arbitrary compact manifolds with nonzero Euler characteristic.

Of course if for some manifold there exists an upper bound on the size of the finite
groups which can act effectively on it then this manifold
automatically satisfies Ghys' conjecture.
There exist plenty of examples of such manifolds in the literature: see \cite{P}
for examples of simply connected 6-manifolds with this property and the references
in \cite{P} for more classical constructions of non simply connected examples.

Although Ghys' conjecture originally refers to compact manifolds,
it is natural to wonder to what extent it is true for noncompact connected manifolds.
In \cite{M3} the conjecture is proved for open contractible manifolds.
In contrast, a recent preprint of Popov \cite{Po} gives a beautiful 4-dimensional
example showing that Ghys' conjecture is false for general open connected manifolds.
This example is based on Higman's universal finitely presented group.
A plausible guess seems to be that Ghys' conjecture should true for open connected manifolds
with finitely generated integral cohomology.

\subsection{Conventions}
In this paper by a natural number we understand a strictly positive
integer, so $\NN=\ZZ_{>0}$.
For any prime $p$, $\FF_p$ denotes the field of $p$ elements.
The $p$-part of an abelian group $\Gamma$ is the subgroup $\Gamma_p$
consisting of the elements whose order is a power of $p$.

As already stated,
all manifolds considered in this paper will be, unless we say the contrary,
compact, possibly with boundary, but not necessarily connected.
If $X$ is a manifold, we define the dimension of $X$ to be
$$\dim X:=\max\{\dim X'\mid X'\subseteq X \text{ connected component}\}.$$

All groups will be finite, except from some obvious exceptions such as
positive dimensional Lie groups or diffeomorphism groups.

If a group $G$ acts on a space $X$ we denote by $G_x$
the stabilizer $\{g\in G\mid g\cdot x=x\}$
of any point $x\in X$, and by $X^g$ the
fixed point set $\{x\in X\mid g\cdot x=x\}$
of any $g\in G$. For any subset $S\subseteq G$ we denote by
$X^S$ the intersection $\bigcap_{g\in S}X^g$.

We say that a group $G$ acts on a manifold
$X$ in an ECT way if the action
is effective and the induced action on the integral cohomology
of $X$ is trivial.

\subsection{Contents of the paper}
In Section \ref{s:preliminaries} we collect several basic facts on smooth
actions of finite groups (these are enough, in particular, to prove Ghys'
conjecture for actions of $p$-groups on manifolds with nonzero Euler
characteristic).
Section \ref{s:fixed-point-loci-p-groups} contains results on the cohomology
of the fixed point locus of actions of $p$-groups, proved using localization
in equivariant cohomology. The basic consequence to be used in the subsequent
arguments is an upper bound on the number of connected components of the
fixed point locus.
Section \ref{s:K-stable-actions} introduces the notion of $K$-stable actions
of abelian groups, preceded by some preliminary results.
Section \ref{s:K-stable-actions-no-odd-cohomology}
contains some results on $K$-stable actions of abelian groups on manifolds
without odd cohomology.
Sections \ref{s:proof-thm:bounded-primes}
and \ref{s:proof-thm:main-abelian} contain the proofs of Theorems
\ref{thm:main-abelian} and \ref{thm:bounded-primes} respectively.
Finally, in Section \ref{s:proof-thm:main} we prove Theorem \ref{thm:main}.

\subsection{Acknowledgement}
I am very pleased to acknowledge my indebtedness to Alexandre Turull.
It's thanks to him that Theorem \ref{thm:main},
which in earlier versions of this paper referred only to finite solvable groups, has become
a theorem on arbitrary finite groups.

\section{Preliminaries}

\label{s:preliminaries}

\subsection{Local linearization of smooth finite group actions}

The following result is well known. We recall it because of its crucial role
in some of the arguments of this paper. Statement (1)
implies that the fixed point set of a finite group action on a manifold with boundary
is a neat submanifold in the sense of \S 1.4 in \cite{H}.

\begin{lemma}
\label{lemma:linearization}
Let a finite group $\Gamma$ act smoothly on a manifold $X$, and let $x\in X^{\Gamma}$.
The tangent space $T_xX$ carries a linear action of $\Gamma$, defined as the derivative
at $x$ of the action on $X$, satisfying the following properties.
\begin{enumerate}
\item There exist neighborhoods $U\subset T_xX$ and $V\subset X$, of $0\in T_xX$ and $x\in X$ respectively, such that:
    \begin{enumerate}
    \item if $x\notin\partial X$ then there is a $\Gamma$-equivariant diffeomorphism $\phi\colon U\to V$;
    \item if $x\in\partial X$ then there is $\Gamma$-equivariant diffeomorphism $\phi\colon U\cap \{\xi\geq 0\}\to V$, where $\xi$ is a nonzero
        $\Gamma$-invariant element of $(T_xX)^*$ such that $\Ker\xi=T_x\partial X$.
    \end{enumerate}
\item If the action of $\Gamma$ is effective and $X$ is connected then the action of
$\Gamma$ on $T_xX$ is effective, so it induces an inclusion $\Gamma\hookrightarrow\GL(T_xX)$.
\item \label{item:inclusio-propia-subvarietats-fixes}
If $\Gamma'\vt \Gamma$ and $\dim_xX^{\Gamma}<\dim_xX^{\Gamma'}$ then
there exists an irreducible $\Gamma$-submodule $V\subset T_xX$
on which the action of $\Gamma$ is nontrivial but the action of $\Gamma'$ is trivial.
\end{enumerate}
\end{lemma}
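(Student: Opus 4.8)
The plan is to establish the three statements in sequence, using the standard averaging trick to linearize and then reading off the consequences.

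First, for the existence of the linear action and statement (1): the derivative at $x$ of the diffeomorphisms $\gamma\colon X\to X$, $\gamma\in\Gamma$, gives linear maps $d_x\gamma\colon T_xX\to T_xX$, and since $x$ is fixed by all of $\Gamma$ the chain rule shows $\gamma\mapsto d_x\gamma$ is a homomorphism, i.e.\ a linear action of $\Gamma$ on $T_xX$. To produce the equivariant local model, I would pick any Riemannian metric on $X$ and average it over $\Gamma$ to obtain a $\Gamma$-invariant metric $g$; the exponential map $\exp_x\colon T_xX\to X$ of $g$ is then $\Gamma$-equivariant on a small ball, since $\Gamma$ acts by isometries and isometries commute with $\exp$. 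This gives the diffeomorphism $\phi$ in case (1a). In the boundary case (1b), one uses instead a $\Gamma$-invariant collar of $\partial X$ near $x$ (obtained by averaging an arbitrary collar, or by using a $\Gamma$-invariant metric for which $\partial X$ is totally geodesic, e.g.\ by doubling): the half-space $\{\xi\ge 0\}$ corresponds to the collar direction, where $\xi\in(T_xX)^*$ is the averaged conormal, which is $\Gamma$-invariant and has kernel $T_x\partial X$ because $\partial X$ is $\Gamma$-invariant.

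For statement (2): suppose $\gamma\in\Gamma$ acts trivially on $T_xX$. By part (1), in the local model $V\cong U$ (or $U\cap\{\xi\ge 0\}$) the action of $\gamma$ is the linear action $d_x\gamma=\Id$, so $\gamma$ fixes $V$ pointwise. Hence $X^\gamma$ contains a nonempty open set; since $X$ is connected and $X^\gamma$ is a closed submanifold (by part (1) applied at each of its points, its dimension is locally constant and equals $\dim X$ on the open set just found, forcing $X^\gamma=X$ — more carefully, the set where $X^\gamma$ is open-and-closed is all of $X$). Thus $\gamma$ acts trivially on $X$, and effectiveness gives $\gamma=1$. So $\Gamma\hookrightarrow\GL(T_xX)$.

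For statement (3): work inside the local model, so $T_xX$ decomposes as a $\Gamma$-module into isotypic pieces, and $X^\Gamma$ near $x$ corresponds to $(T_xX)^\Gamma$ while $X^{\Gamma'}$ near $x$ corresponds to $(T_xX)^{\Gamma'}$. The hypothesis $\dim_xX^\Gamma<\dim_xX^{\Gamma'}$ therefore says $(T_xX)^\Gamma\subsetneq(T_xX)^{\Gamma'}$, i.e.\ there is a vector fixed by $\Gamma'$ but not by $\Gamma$. Decompose $T_xX=\bigoplus_j W_j$ into irreducible $\Gamma$-submodules; since $\Gamma'$ is normal in $\Gamma$, each $W_j^{\Gamma'}$ is a $\Gamma$-submodule of the irreducible $W_j$, hence is $0$ or all of $W_j$, so $(T_xX)^{\Gamma'}=\bigoplus_{j:\ \Gamma'\text{ acts trivially on }W_j}W_j$. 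Pick $j$ with $W_j\subseteq(T_xX)^{\Gamma'}$ but $W_j\not\subseteq(T_xX)^\Gamma$; such $j$ exists by the strict inclusion. Then $V:=W_j$ is an irreducible $\Gamma$-submodule on which $\Gamma'$ acts trivially but $\Gamma$ does not, as required.

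The only genuinely delicate point is the reduction in (3) of the local comparison of fixed loci to the corresponding statement for the tangent representation, and the use of normality of $\Gamma'$ to control $W_j^{\Gamma'}$; everything else is the routine averaging argument. I expect the main obstacle, such as it is, to be bookkeeping the boundary case in (1) carefully, making sure the invariant collar and conormal $\xi$ behave as claimed.
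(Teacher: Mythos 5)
Your proposal is correct and follows essentially the same route as the paper: average a metric (arranging an invariant collar so that the boundary case works), use the equivariant exponential map for (1), deduce (2) from the fact that $X^{\gamma}$ is a closed submanifold of full dimension at $x$ in a connected $X$, and obtain (3) by exhibiting an irreducible $\Gamma$-summand of $(T_xX)^{\Gamma'}$ not contained in $(T_xX)^{\Gamma}$, using normality of $\Gamma'$ exactly as the paper does implicitly when it takes an irreducible factor of the orthogonal complement of $T_xX^{\Gamma}$ in $T_xX^{\Gamma'}$. The only cosmetic difference is that the paper builds the invariant collar concretely by flowing an averaged inward-pointing vector field, whereas you leave that construction slightly more schematic.
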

\begin{proof}
We first construct a $\Gamma$-invariant Riemannian metric $g$ on $X$ with respect to which $\partial X\subset X$ is totally geodesic. Take any
tangent vector field on a neighborhood of $\partial X$ whose restriction
to $\partial X$ points inward; averaging over the action of $\Gamma$, we get a
$\Gamma$-invariant vector field which still points inward, and its flow at short time
defines an embedding $\psi:\partial X\times[0,\epsilon)\to X$ for some small $\epsilon>0$
such that $\psi(x,0)=x$ and
$\psi(\gamma\cdot x,t)=\gamma\cdot \psi(x,t)$ for any $x\in \partial X$ and $t\in
[0,\epsilon)$. Let $h$ be a $\Gamma$-invariant Riemannian metric on $\partial X$ and consider any Riemannian metric on $X$ whose restriction to $\psi(\partial X\times[0,\epsilon/2])$
is equal to $h+dt^2$. Averaging this metric over the action of $\Gamma$ we obtain a metric $g$
with the desired property. The exponential map with respect to $g$
gives the local diffeomorphism in (1). To prove (2), assume that the action of
$\Gamma$ on $X$ is effective. (1) implies that for any subgroup
$\Gamma'\subseteq \Gamma$ the fixed point set
$X^{\Gamma'}$ is a submanifold of $X$ and that $\dim_xX=\dim (T_xX)^{\Gamma'}$
for any $x\in X^{\Gamma'}$; furthermore, $X^{\Gamma'}$ is closed by the continuity
of the action. So if some element $\gamma\in\Gamma$
acts trivially on $T_xX$, then by $X^{\gamma}$ is a closed
submanifold of $X$ satisfying $\dim_xX^{\gamma}=\dim X$. Since $X$ is connected this implies $X^{\gamma}=X$,
so $\gamma=1$, because the action of $\Gamma$ on $X$ is effective. Finally, (3)
follows from (1) ($V$ can be defined as any of the irreducible factors in the $\Gamma$-module
given by the perpendicular of $T_xX^{\Gamma}$ in $T_xX^{\Gamma'}$).
\end{proof}

\subsection{Generators of finite abelian subgroups of $\GL(n,\RR)$}
The following result will be used in combination with Jordan's Theorem \ref{thm:Jordan-classic}.

\begin{theorem}
\label{thm:abelian-generat} For any $n\in\NN$ there exists some $C_n\in\NN$ such that
any finite abelian subgroup of $\GL(n,\RR)$ has
a subgroup of index at most $C_n$ which is isomorphic to a
subgroup of $(S^1)^{[n/2]}$ and can be generated by at most $[n/2]$ elements.
\end{theorem}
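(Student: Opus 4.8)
The plan is to reduce the problem to a statement about lattices acted on by finite abelian groups, using the standard structure theory of finite abelian subgroups of $\GL(n,\RR)$. First I would note that a finite abelian subgroup $A\subseteq\GL(n,\RR)$ is conjugate into $\O(n)$ (average an inner product), so we may assume $A\subseteq\O(n)$. Since $A$ is abelian, $\RR^n$ decomposes as an orthogonal direct sum of $A$-invariant subspaces on which $A$ acts either trivially or by irreducible real representations; the irreducible real representations of an abelian group are either $1$-dimensional (with image in $\{\pm 1\}$) or $2$-dimensional (with image in $SO(2)=S^1$). So after conjugation $A$ is contained in a maximal torus-plus-signs subgroup: a subgroup of the form $T\rtimes$ (something), but more usefully, $A$ is contained in $D\times R$ where $D\subseteq\O(k)$ consists of diagonal $\pm 1$ matrices (on the sum of the $1$-dimensional pieces, together with the trivial piece) and $R\cong(S^1)^m$ is a product of rotation groups on the $2$-dimensional pieces, with $k+2m\le n$ and hence $m\le[n/2]$.

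Next I would handle the "$\pm 1$" part. The projection of $A$ to $D$ has image an elementary abelian $2$-group of rank at most $k\le n$, hence of order at most $2^n$; let $A_1=A\cap(\{1\}\times R)$ be the kernel of this projection. Then $[A:A_1]\le 2^n$, and $A_1$ is isomorphic to a subgroup of $R\cong(S^1)^m$ with $m\le[n/2]$. A finitely generated (here finite) abelian subgroup of $(S^1)^m$ is isomorphic to a subgroup of $(S^1)^m$ that can be generated by at most $m$ elements — indeed any subgroup of $(S^1)^m$ is a quotient of $\ZZ^m$ composed with... more carefully: a finite subgroup of the $m$-torus $(\RR/\ZZ)^m$ is of the form $L/\ZZ^m$ for some lattice $\ZZ^m\subseteq L\subseteq\QQ^m$, and by the structure theorem for finitely generated abelian groups such a quotient needs at most $m$ generators. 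So $A_1$ already can be generated by at most $[n/2]$ elements and is a subgroup of $(S^1)^{[n/2]}$, and $[A:A_1]\le 2^n=:C_n$. This proves the theorem with $C_n=2^n$.

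The only place requiring care — and the step I expect to be the main (minor) obstacle — is the bookkeeping that ties together the three kinds of pieces (trivial, sign, rotation) into a single bound: one must be sure that the dimensions add up, i.e. that the number of $2$-dimensional rotation blocks is at most $[n/2]$ regardless of how many sign blocks there are, and that the factor controlling the index depends only on $n$. Both are immediate from $k+2m\le n$. One subtlety worth stating explicitly is that we are not claiming $A$ itself sits in a conjugate of $(S^1)^{[n/2]}$ (it need not, because of the sign reflections), only that a bounded-index subgroup does; this is exactly what the statement asks. Everything else is routine: conjugating into $\O(n)$, decomposing into isotypic/irreducible real pieces, and applying the structure theorem for finite abelian groups.
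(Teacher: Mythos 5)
Your proof is correct and is in substance the same as the paper's: both reduce to $\O(n)$, split $\RR^n$ into one-dimensional sign pieces and two-dimensional rotation pieces, pass to the kernel of the sign characters (index at most $2^n$) to land in $(S^1)^{[n/2]}$, and then count generators of a finite subgroup of a torus via the lattice of its preimage in $\RR^m$. The only difference is organizational — the paper extracts the decomposition inductively, one common eigenvector at a time, whereas you invoke the full decomposition into real irreducibles of an abelian group at once — which changes nothing essential.
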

\begin{proof}
We prove that there exists $C_n\in\NN$ such that any finite abelian subgroup $\Gamma$ of
$\GL(n,\RR)$ has a subgroup of index at most $C_n$ which is isomorphic to
a subgroup of $(S^1)^{[n/2]}$.
We use induction on $n$. The case $n=1$ is obvious, since a finite
subgroup of $\GL(1,\RR)$ has at most $2$ elements. Suppose that
$n>1$ and that the claim has been proved for smaller values of $n$.
Let $\Gamma\subset\GL(n,\RR)$ be a finite and abelian subgroup. By the usual
averaging trick, there is some $\Gamma$-invariant Euclidean norm on $\RR^n$, so replacing
$\Gamma$ by some conjugate we may assume that $\Gamma\subset \O(n,\RR)$.
Since $\Gamma$ is abelian, there exists a common eigenvector $e\in\CC^n$
for all the elements of $\Gamma$. Let $\lambda\colon \Gamma\to
S^1\subset\CC^*$ be defined by $\gamma\cdot e=\lambda(\gamma)e$ for
each $\gamma\in\Gamma$. We distinguish two cases.
If $e=\ov{e}$ then $\lambda(\Gamma)\subseteq\{1,-1\}$ (because $\Gamma$ consists of real
matrices) so $\RR e\subset\RR^n$ is $\Gamma$-invariant, and hence so is $e^{\perp}\subset\RR^n$. Then $[\Gamma:\Ker\lambda]\leq 2$
and the restriction map $\Ker\lambda \to\GL(e^{\perp})$ is injective,
so applying the inductive hypothesis to the action of
$\Ker\lambda $ on $e^{\perp}\subset\RR^n$ we are
done. If $e\neq\ov{e}$ then $\dim_{\RR}(\CC\la e,\ov{e}\ra\cap\RR^n)=2$. Let
$\rho\colon \Gamma\to\GL((\CC\la e,\ov{e}\ra\cap\RR^n)^{\perp})$
be the restriction map. By the inductive hypothesis
there exists a subgroup $R\subseteq \rho(\Gamma)$ of index at most $C_{n-2}$
and an injective map
$\iota\colon R\to (S^1)^{[n/2]-1}$. Then $[\Gamma:\rho^{-1}(R)]=C_{n-2}$ and
$$(\lambda,\iota\circ\rho)\colon \rho^{-1}(R)\to S^1\times
(S^1)^{[n/2]-1}=(S^1)^{[n/2]}$$ is injective because the
restriction of $\rho$ to $\Ker\lambda$ is injective, so the claim is proved.

Now, any finite subgroup $H\subset(S^1)^m$ can be generated by at most $m$ elements.
Indeed, if we denote by $L\subset\RR^m$ the preimage of $H$ via the projection map
$\RR^m\to(\RR/2\pi\ZZ)^m\simeq (S^1)^m$, then $L$ is a discrete subgroup of $\RR^m$, so
it can be generated by at most $m$ elements, see for example \cite[Theorem 5.3.2]{R}.
Since $H$ is a quotient of $L$, the same holds for $H$.
\end{proof}

\subsection{Cohomologically trivial and unipotent actions}

Let $R$ be a commutative ring with unit.
Let a group $G$ act on a topological space $X$. For any $g\in G$ denote by
$g^*\colon H^*(X;R)\to  H^*(X;R)$ be the endomorphism induced by the action of $g$.
We say that the action of  $G$ on $X$ is $R$-cohomologically trivial
($R$-CT, for short) if $g^*$ is the identity for all $g\in G$.
Similarly, we say that the action is $R$-cohomologically unipotent
($R$-CU, for short) if for each $g$ there exists some $m$ such that
$(1-g^*)^m=0$. If $G$ is finite and $R$ is $\ZZ$ or a field of
characteristic zero, then $R$-CU implies $R$-CT; but in general
an action of a finite group $G$ might be $\FF_p$-CU without being
$\FF_p$-CT.

If $R=\ZZ$ then we will write CT instead of $R$-CT. Taking the trivial
lift of the $G$-action to the constant sheaves of abelian groups
on $X$ in the short exact sequence
\begin{equation}
\label{eq:ex-sequence-z-p}
0\to\un{\ZZ}\stackrel{\cdot p}{\longrightarrow}\un{\ZZ}
\to \un{\FF_p}
\to 0
\end{equation}
and applying cohomology, we deduce that, for a given action of $G$,
\begin{equation}
\label{eq:CT-implies-CT-p}
\text{CT}\quad\Longrightarrow\quad \FF_p-\text{CU}\qquad\qquad\text{for any prime $p$.}
\end{equation}
If $H^*(X,\ZZ)$ has no $p$-torsion or is concentrated
in even degrees, as we assume in our main theorem, then CT implies $\FF_p$-CT (however,
in the course of the proof of our main theorem we can not avoid considering
actions on manifolds which can be $\FF_p$-CU and not $\FF_p$-CT, mainly because of Theorem
\ref{thm:accions-unipotents}).

\begin{lemma}
\label{lemma:Minkowski}
For any manifold $X$ without odd cohomology there exists some $C\in\NN$ such that any
finite group $G$ acting on $X$ has a subgroup $G_0\subseteq G$
of index at most $C$ whose action on $X$ is CT.
\end{lemma}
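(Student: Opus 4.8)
The plan is to reduce to the classical Minkowski-type bound on finite subgroups of $\GL(n,\ZZ)$. Let $X$ have connected components $X_1,\dots,X_r$, and set $N=\sum_i \dim_{\QQ} H^*(X_i;\QQ)$; since $X$ has no odd cohomology, $H^*(X;\ZZ)$ is torsion free, so $H^*(X;\ZZ)\cong\ZZ^N$ as an abelian group, and the action of any finite group $G$ on $X$ induces a homomorphism $\rho\colon G\to \Aut(H^*(X;\ZZ))\cong \GL(N,\ZZ)$. An element $g\in G$ acts CT on $X$ precisely when $\rho(g)=\Id$, i.e. $\ker\rho$ is exactly the subgroup acting CT. So it suffices to bound $[G:\ker\rho]=|\rho(G)|$ by a constant depending only on $X$.

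First I would invoke the classical fact (Minkowski's lemma) that there is a constant $C=C(N)$ such that every finite subgroup of $\GL(N,\ZZ)$ has order at most $C$ — or, in the form most convenient here, that the reduction-mod-$3$ map $\GL(N,\ZZ)\to\GL(N,\FF_3)$ is injective on finite subgroups, so that $|\rho(G)|\le |\GL(N,\FF_3)|=:C$. Applying this to the finite group $\rho(G)\subseteq\GL(N,\ZZ)$ gives $|\rho(G)|\le C$, whence $G_0:=\ker\rho$ has index at most $C$ in $G$ and acts CT on $X$ by construction. Since $N$, and hence $C$, depends only on the Betti numbers of $X$, this is the desired bound.

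The one point requiring a little care is the torsion-freeness input: one must note that "$X$ without odd cohomology" is exactly what guarantees $H^*(X;\ZZ)$ is a free $\ZZ$-module (this is part of the definition recalled in the introduction of the excerpt, where it is observed that torsion-freeness is built into the hypothesis), so that $\Aut(H^*(X;\ZZ))$ is genuinely $\GL(N,\ZZ)$ rather than the automorphism group of a mixed abelian group; the argument would otherwise need the automorphism group of a finitely generated abelian group, which is still residually finite but slightly less clean to quote. I do not expect any serious obstacle: the whole statement is essentially the assertion that $\Diff(X)$-actions factor, up to bounded index, through the (finite, by Minkowski) image in $\GL(H^*(X;\ZZ))$, and the only genuine content is the classical finiteness theorem for finite subgroups of $\GL(N,\ZZ)$, which I would cite rather than reprove.
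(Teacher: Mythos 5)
Your argument is correct and is essentially the paper's own proof: both reduce the statement to the observation that $H^*(X;\ZZ)\simeq\ZZ^N$ (using compactness and the no-odd-cohomology hypothesis) so that the induced representation $G\to\GL(N,\ZZ)$ has kernel exactly the CT subgroup, and then invoke Minkowski's bound on finite subgroups of $\GL(N,\ZZ)$. The only difference is that you spell out one standard proof of Minkowski's bound (injectivity of reduction mod $3$ on finite subgroups), whereas the paper simply cites it.
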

\begin{proof}
Since $X$ is implicitly assumed to be compact,
its cohomology is a finitely generated abelian group,
and since $X$ has no odd cohomology, we have
$H^*(X;\ZZ)\simeq \ZZ^{r}$ for some $r$. So the statement
follows from a result of Minkowski which says that
the size of any finite subgroup of $\GL(r;\ZZ)$ is bounded above by a number
depending only on $r$ (see \cite{Mi,S}, or \cite{M1} for a proof using
Jordan's theorem).
\end{proof}

For a generalization of the preceding lemma, see Lemma 2.2 in \cite{M2}.

Recall that a group $G$ is said to act on a manifold $X$ in an ECT way
if the action is effective and CT.

\subsection{Good actions and good triangulations}
All simplicial complexes we consider in this paper are implicitly assumed to be finite.
Let $G$ be a 
group and let $\cC$ be a simplicial complex
endowed with an action of $G$. We say that this action is good  if for
any $g\in G$ and any $\sigma\in \cC$ such that $g(\sigma)=\sigma$
we have $g(\sigma')=\sigma'$ for any subsimplex $\sigma'\subseteq\sigma$
(equivalently, the restriction of the action of $g$ to $|\sigma|\subset|\cC|$ is the identity).
This property is called condition (A) in Chapter III, \S 1, of \cite{Br}.
If $\cC$ is a simplicial complex and $G$ acts on
$\cC$, then the induced action of $G$ on the barycentric subdivision
$\sd \cC$ is good (see Proposition 1.1 of Chapter III in \cite{Br}).

Suppose that $G$ acts on a manifold $Y$. A $G$-good triangulation of $Y$ is a
pair $(\cC,\phi)$, where $\cC$ is a simplicial complex endowed with a
good action of $G$ and $\phi\colon Y\to |\cC|$ is a $G$-equivariant homeomorphism.
For any smooth action of a finite group $G$ on a manifold $X$ there exist
$G$-good triangulations of $X$ (by the previous comments it suffices to prove
the existence of a $G$-equivariant triangulation; this can be easily obtained
adapting the construction of triangulations of smooth manifolds given in \cite{C}
to the finitely equivariant setting; for much more detailed results, see \cite{I}).

\subsection{Euler characteristic of fixed point set for cyclic groups}

\begin{lemma}
\label{lemma:Euler-fixed-point-set-cyclic} Let $\Gamma$ be a cyclic
group, let $k$ be a field of characteristic $p$ ($p$ can be $0$),
and assume that $\Gamma$ acts on a 
manifold $Y$ in a $k$-CU way. We have
$$\chi(Y^{\Gamma})\equiv \chi(Y) \mod p.$$
\end{lemma}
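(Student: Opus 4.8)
The plan is to reduce to the case where $\Gamma$ is cyclic of prime order $q$, and then do induction on $|\Gamma|$. First suppose $|\Gamma| = q$ is prime. If $q = p$ (the characteristic of $k$), then since the action of $\Gamma$ on $H^*(Y;k)$ is unipotent, every endomorphism $g^*$ is unipotent, so $\Tr(g^* \mid H^j(Y;k)) = \dim_k H^j(Y;k)$ in $k$, i.e. equals the $j$-th Betti number reduced mod $p$. Summing with signs, the Lefschetz number $L(g) := \sum_j (-1)^j \Tr(g^*\mid H^j(Y;k))$ equals $\chi(Y) \bmod p$. On the other hand, using a $\Gamma$-good triangulation $(\cC,\phi)$ of $Y$ (which exists by the discussion in the excerpt, since the action is smooth), one computes $L(g)$ simplicially: the simplices not fixed by $g$ are permuted freely in orbits of size $q = p$, so their contribution to $\sum_\sigma (-1)^{\dim\sigma}$ vanishes mod $p$, leaving $L(g) \equiv \chi(Y^\Gamma) \bmod p$. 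Combining the two computations gives $\chi(Y^\Gamma) \equiv \chi(Y) \bmod p$. If instead $q \neq p$ (including the case $p = 0$), then one cannot use unipotence directly, but here the classical Lefschetz fixed point argument for $\ZZ/q$ acting on a finite complex still applies: the non-fixed simplices come in free orbits of size $q$, so $\sum_\sigma (-1)^{\dim\sigma} \equiv \chi(Y^\Gamma) \bmod q$, while $\sum_\sigma(-1)^{\dim\sigma} = \chi(Y)$ exactly; but we want congruence mod $p$, and for $p = 0$ this is a genuine equality $\chi(Y^\Gamma) = \chi(Y)$. Actually when $q \neq p$ the hypothesis $k$-CU forces $k$-CT (a unipotent element of finite order $q$ prime to $p$ is the identity), so each $g^*$ is the identity on $H^*(Y;k)$, whence $L(g) = \chi(Y) \bmod p$ as an identity in $k$, and again $L(g) \equiv \chi(Y^\Gamma)$ from the triangulation, giving the claim.

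For general cyclic $\Gamma$, I would argue by induction on $|\Gamma|$. Pick a prime $q$ dividing $|\Gamma|$, let $\Gamma_0 \subseteq \Gamma$ be the unique subgroup of index $q$, and set $Z := Y^{\Gamma_0}$. By the inductive hypothesis applied to the action of $\Gamma_0$ on $Y$ (which is still $k$-CU, being a restriction), $\chi(Z) = \chi(Y^{\Gamma_0}) \equiv \chi(Y) \bmod p$. Now $Z$ is a $\Gamma_0$-fixed submanifold, hence carries a residual action of the cyclic group $\Gamma/\Gamma_0 \cong \ZZ/q$, and $Z^{\Gamma/\Gamma_0} = Y^\Gamma$. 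To close the induction I need that the $\ZZ/q$-action on $Z$ is $k$-CU so as to invoke the prime-order case; when $q = p$ this requires knowing that the action of $\Gamma$ on $H^*(Z;k)$ is unipotent, which is not automatic — but in fact the prime-order argument above only uses the triangulation computation of $L(g)$ together with the equality $L(g) \equiv \chi(Z) \bmod p$, and the latter I can get directly: $L(g) = \sum_j (-1)^j \Tr(g^* \mid H^j(Z;k))$ and I can instead bypass cohomology of $Z$ entirely by noting $\chi(Y^\Gamma) = \chi(Z^{\ZZ/q}) \equiv \chi(Z) \bmod q$ by the simplicial computation on a $(\ZZ/q)$-good triangulation of $Z$, combined with $\chi(Z) \equiv \chi(Y) \bmod p$ from the inductive step (noting $q$ is a divisor of $|\Gamma|$ but we want mod $p$). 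This shows the cleanest route is to separate the two mechanisms: the simplicial/orbit-counting step gives congruences mod $q$ for each prime $q \mid |\Gamma|$, and the cohomological step (using $k$-CU) is what upgrades this to the statement mod $p$.

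Let me reorganize accordingly. The cleanest proof: take a $\Gamma$-good triangulation $(\cC,\phi)$ of $Y$, so $\Gamma$ acts good-ly and simplicially. Let $g$ be a generator of $\Gamma$. For the simplicial chain complex $C_\bullet(\cC;k)$ with its $\Gamma$-action, the Hopf trace formula gives $L(g) = \sum_i (-1)^i \Tr(g_* \mid C_i(\cC;k)) = \sum_i (-1)^i \Tr(g^* \mid H^i(Y;k))$. For the left side, $\Tr(g_* \mid C_i(\cC;k))$ counts (with sign, but on a good action $g$ fixes a simplex only if it fixes it pointwise, so the sign is $+1$) the number of $i$-simplices fixed by $g$, i.e. the $i$-simplices of the subcomplex $\cC^g = \phi(Y^\Gamma)$; hence the left side is exactly $\chi(Y^\Gamma)$ in $k$. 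For the right side, the $k$-CU hypothesis says each $g^*$ on $H^i(Y;k)$ is unipotent, so $\Tr(g^* \mid H^i(Y;k)) = \dim_k H^i(Y;k) \cdot 1_k$, and the alternating sum is $\chi(Y) \cdot 1_k$. Equating, $\chi(Y^\Gamma) \cdot 1_k = \chi(Y) \cdot 1_k$ in $k$, which is precisely the asserted congruence mod $p$ (and an equality in $\ZZ$ when $p = 0$). Note this handles arbitrary cyclic $\Gamma$ in one stroke and never needs the induction — the only subtlety, and the main point to be careful about, is the claim that for a good simplicial action the trace of $g_*$ on $C_i$ genuinely equals the count of fixed $i$-simplices with no stray signs; this is exactly where condition (A) (goodness) is used, since a simplex reversed by $g$ would contribute $-1$, but goodness rules that out. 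This is the one step I would write out carefully.
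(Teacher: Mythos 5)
Your final, reorganized argument --- a $\Gamma$-good triangulation plus the Hopf trace formula, with goodness guaranteeing that the trace of a generator on the simplicial chain groups counts the fixed simplices with sign $+1$, and unipotence giving trace equal to dimension on (co)homology, hence alternating sum $\chi(Y)$ mod $p$ --- is exactly the paper's proof. The preliminary prime-order reduction and induction are superfluous, as you yourself conclude, so the proposal is correct and essentially identical to the paper's argument.
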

\begin{proof}
Let $(\cC,\phi)$ be a $\Gamma$-good triangulation of $Y$. Let
$S_i(\cC)$ denote the vector space of $i$-simplicial chains (of ordered
simplices) with coefficients in $k$, let $Z_i(\cC)\subseteq S_i(\cC)$
denote the cycles and let $B_i(\cC)\subseteq Z_i(\cC)$ denote the
boundaries.
Let $\gamma\in \Gamma$ be a generator. Since $k$ has characteristic
$p$ and the action of $\Gamma$ on $\cC$ is good, we have
$$\chi(Y^{\Gamma}) = \sum_i (-1)^i\dim_k S_i(\cC^{\Gamma})
\equiv \sum_i (-1)^i\Tr(\gamma\colon S_i(\cC)\to S_i(\cC)) \mod p.$$
Using the exact sequences
$$0\to B_i(\cC)\to Z_i(\cC)\to H_i(\cC)\to 0,
\qquad
0\to Z_i(\cC)\to S_i(\cC)\to B_{i-1}(\cC)\to 0$$
and the fact that the trace is additive on short exact sequences, we
deduce
$$\sum_i (-1)^i\Tr(\gamma\colon S_i(\cC)\to S_i(\cC))=
\sum_i (-1)^i\Tr(\gamma\colon H_i(\cC)\to H_i(\cC)).$$
Finally, since the action is $k$-CU, we have
$$\sum_i (-1)^i\Tr(\gamma\colon H_i(\cC)\to H_i(\cC))\equiv \chi(Y) \mod p.$$
Putting together the previous equalities we obtain the desired result.
\end{proof}

\subsection{Points with big stabilizer for actions of $p$-groups}

\begin{lemma}
\label{lemma:one-big-stabiliser}
Let $Y$ be a 
manifold satisfying $\chi(Y)\neq 0$. Let $p$ be a prime,
and let $G$ be a $p$-group acting smoothly on $Y$. Let $r$ be
the biggest nonnegative integer such that $p^r$ divides $\chi(X)$.
There exists some $y\in Y$ whose stabilizer has index at most $p^r$.
\end{lemma}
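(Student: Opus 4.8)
The plan is to use the standard counting argument for $p$-group actions via an equivariant triangulation, reducing the computation of $\chi(Y)$ modulo powers of $p$ to a sum over orbits of cells. First I would fix a $G$-good triangulation $(\cC,\phi)$ of $Y$, which exists by the discussion on good triangulations since $G$ is finite and acts smoothly. Because the action is good, for each simplex $\sigma$ of $\cC$ the stabilizer $G_\sigma$ fixes $\sigma$ pointwise, so $G$ acts on the set of (nonoriented) $i$-simplices of $\cC$ and the orbit of $\sigma$ has size $[G:G_\sigma]$, which is a power of $p$ since $G$ is a $p$-group. Grouping the simplices into $G$-orbits, the contribution to $\chi(Y)=\sum_i(-1)^i\#\{i\text{-simplices}\}$ coming from orbits with nontrivial stabilizer-free part, i.e. orbits of size divisible by $p$, accounts for everything except the fixed simplices; hence
$$\chi(Y)\equiv \sum_{\sigma:\,G_\sigma=G}(-1)^{\dim\sigma} = \chi(Y^G) \pmod p,$$
where $|\cC^G|=\phi(Y^G)$ because the action is good.

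The key point is then a dichotomy. If $\chi(Y^G)\neq 0$, then $Y^G\neq\emptyset$ and any $y\in Y^G$ has stabilizer $G$ of index $1\le p^r$, so we are done. If $\chi(Y^G)=0$, then the congruence above only tells us $p\mid\chi(Y)$, which is not yet enough; here I would iterate the argument one subgroup at a time. More precisely, I would prove by induction on $|G|$ the sharper statement: if $\chi(Y)\neq 0$ and $p^r\,\|\,\chi(Y)$, then some point has stabilizer of index at most $p^r$. For the inductive step, pick a subgroup $H\vartriangleleft G$ of index $p$ (possible since $G$ is a nontrivial $p$-group, which has a normal subgroup of index $p$); apply the same orbit-counting to the $G/H$-action, no — rather apply it to decompose $\chi(Y)$ in terms of $\chi(Y^H)$ and orbits of size divisible by $p$ under the residual action. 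One gets $\chi(Y)\equiv\chi(Y^H)\pmod p$ as before, but to extract the exponent $r$ I would instead stratify $Y$ by stabilizer type: write $Y$ as a disjoint union (compatible with the triangulation) of the sets $Y_{(K)}$ of points whose stabilizer is conjugate to $K$, and note $\chi(Y)=\sum_{(K)}\chi(Y_{(K)})$ while for $K\neq G$ the orbit of each cell in $Y_{(K)}$ has size $[G:K]$ divisible by $p$, so $p^{v_p([G:K])}\mid\chi(Y_{(K)})$.

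The main obstacle is to turn the mod-$p$ congruence into the exact statement about the exponent $r$. The cleanest route: suppose for contradiction that every point of $Y$ has stabilizer of index $>p^r$, i.e. of index at least $p^{r+1}$. Then every $G$-orbit of cells in the triangulation has cardinality divisible by $p^{r+1}$, since a cell's stabilizer is the stabilizer of any of its interior points. Consequently $\chi(Y)=\sum_{\text{orbits }\oO}(-1)^{\dim\oO}|\oO|$ is divisible by $p^{r+1}$, contradicting $p^r\,\|\,\chi(Y)$. (Here I use that $(-1)^{\dim\sigma}$ is constant on a $G$-orbit, which holds because the action is good, hence simplicial in the strong sense that it does not permute the vertices of a fixed simplex.) This contradiction argument is in fact shorter than the induction and avoids the stratification; I would present it as the proof, using only the existence of a $G$-good triangulation and the fact that the stabilizer of a cell coincides with the stabilizer of each of its interior points (a consequence of goodness together with local linearization, Lemma \ref{lemma:linearization}, though goodness alone suffices at the simplicial level). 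The one routine check to dispatch is that $|\cC|$ being a compact polyhedron guarantees finitely many cells so the sum is finite and the $p$-adic valuation argument is legitimate.
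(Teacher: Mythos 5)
Your final argument (the contradiction version) is exactly the paper's proof: take a $G$-good triangulation, observe that if every point had stabilizer of index $\geq p^{r+1}$ then every orbit of simplices would have cardinality divisible by $p^{r+1}$, forcing $p^{r+1}\mid\chi(Y)$ and contradicting the definition of $r$; goodness of the triangulation supplies the identification of cell stabilizers with point stabilizers in both directions. The preliminary detours (the $\chi(Y)\equiv\chi(Y^G)\bmod p$ congruence, the induction on $|G|$, the stratification by orbit type) are unnecessary and correctly discarded, and the proof you settle on is sound.
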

\begin{proof}
Let $(\cC,\phi)$ be a $\Gamma$-good triangulation of $Y$.
The cardinal of each of the orbits of $G$ acting on $\cC$ is a power of $p$.
If the cardinal of all orbits were divisible by $p^{r+1}$, then
for each $d$ the cardinal of the set of $d$-dimensional simplices in $\cC$
would be divisible by $p^{r+1}$, and consequently
$\chi(Y)=\chi(\cC)$ would also be divisible by $p^{r+1}$, contradicting
the definition of $r$. Hence, there must be at least one simplex $\sigma\in \cC$
whose orbit has at most $p^r$ elements. This means that the stabilizer $G_{\sigma}$ of
$\sigma$ has index at most $p^r$. If $y\in Y$ is a point such that
$\phi(y)\in |\sigma|\subseteq |\cC|$,
then $y$ is fixed by $G_{\sigma}$, because the triangulation is good.
\end{proof}

Note the combination of Lemma \ref{lemma:linearization} with Jordan's Theorem
\ref{thm:Jordan-classic} and Lemma \ref{lemma:one-big-stabiliser} implies
Ghys' conjecture for actions of finite $p$-groups on manifolds
with nonzero Euler characteristic.

\section{Fixed point loci of actions of abelian $p$-groups}
\label{s:fixed-point-loci-p-groups}

\subsection{Betti numbers of fixed point sets}

For any group $G$ we denote by $Z(G)$ the center of $G$.

\begin{theorem}
\label{thm:accions-unipotents}
Let $p$ be a prime number, and let $G$ be a $p$-group. Let
$\Gamma\subseteq Z(G)$ be a subgroup isomorphic to $\ZZ/p\ZZ$. Suppose that $G$ acts
on a 
manifold $Y$ and that the action is $\FF_p$-CU.
Then:
\begin{enumerate}
\item $\sum_j b_j(Y^\Gamma;\FF_p)\leq\sum_j b_j(Y;\FF_p)$,
\item the natural action of $G/\Gamma$ on $Y^{\Gamma}$ is $\FF_p$-CU,
\item $\chi(Y^\Gamma)\equiv\chi(Y)\mod p$.
\end{enumerate}
\end{theorem}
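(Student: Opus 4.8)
The plan is to use localization in equivariant cohomology for the $\ZZ/p\ZZ$-action of $\Gamma$, which is the classical tool of Borel, Quillen and Hsiang. Write $\Gamma\cong\ZZ/p\ZZ$ and let $H^*_\Gamma(-):=H^*(E\Gamma\times_\Gamma -;\FF_p)$ denote $\Gamma$-equivariant cohomology with $\FF_p$-coefficients, which is a module over $H^*_\Gamma(\mathrm{pt};\FF_p)=H^*(B\Gamma;\FF_p)$. The first step is to recall that the localization theorem gives, after inverting a suitable element $t$ (a polynomial generator in even degree, or the product of such with the degree-one generator when $p$ is odd), an isomorphism of localized modules between the equivariant cohomology of $Y$ and that of the fixed locus $Y^\Gamma$; concretely $\dim_{\FF_p}H^*(Y^\Gamma;\FF_p)$ is bounded above by the total dimension of the localization of $H^*_\Gamma(Y;\FF_p)$, which in turn is at most $\dim_{\FF_p}H^*(Y;\FF_p)$ because the Serre spectral sequence of $Y\to E\Gamma\times_\Gamma Y\to B\Gamma$ has $E_2$-page $H^*(B\Gamma;\FF_p)\otimes H^*(Y;\FF_p)$ and can only lose dimension. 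This proves part (1). (One must be slightly careful here: the standard statement needs $Y$ to be a finite-dimensional $\Gamma$-CW complex or to have bounded cohomological dimension; since $Y$ is a compact manifold with a smooth, hence triangulable, $\Gamma$-action this is fine.)

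For part (3) I would combine part (1) with Lemma \ref{lemma:Euler-fixed-point-set-cyclic}: the hypothesis that the $G$-action is $\FF_p$-CU implies in particular that the restricted action of the cyclic group $\Gamma$ on $Y$ is $\FF_p$-CU, so Lemma \ref{lemma:Euler-fixed-point-set-cyclic} applies directly with $k=\FF_p$ and gives $\chi(Y^\Gamma)\equiv\chi(Y)\bmod p$. So (3) is essentially immediate from material already in the paper and does not even need the localization machinery — though it also follows from the localization isomorphism by taking Euler characteristics of the localized modules.

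Part (2) is where I expect the real work to be. The group $G/\Gamma$ acts on $Y^\Gamma$ because $\Gamma$ is central (so $Y^\Gamma$ is $G$-invariant and $\Gamma$ acts trivially on it). I need to show this action is $\FF_p$-CU, i.e. that for every $\bar g\in G/\Gamma$, $1-\bar g^*$ is nilpotent on $H^*(Y^\Gamma;\FF_p)$. The natural approach is to compare the $G$-action on $H^*_\Gamma(Y;\FF_p)$ with the $G/\Gamma$-action on $H^*(Y^\Gamma;\FF_p)$ through the localization isomorphism, which is $G$-equivariant since $\Gamma$ is central in $G$ (so $G/\Gamma$ acts compatibly on both sides, and $G$ acts trivially on the coefficient ring $H^*(B\Gamma;\FF_p)$ as it acts trivially on $\Gamma$ by centrality). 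One then argues: the $G$-action on the $E_2$-page $H^*(B\Gamma;\FF_p)\otimes H^*(Y;\FF_p)$ is unipotent (trivial on the first factor, unipotent on the second by hypothesis), unipotence is inherited by subquotients so it passes to $E_\infty$ and hence to the associated graded of $H^*_\Gamma(Y;\FF_p)$, whence the $G$-action on $H^*_\Gamma(Y;\FF_p)$ itself is unipotent; unipotence is preserved by localization at a $G$-fixed element and by taking the induced module map, so the $G/\Gamma$-action on $H^*(Y^\Gamma;\FF_p)\otimes_{\FF_p}\FF_p[t,t^{-1}]$ (obtained from the localized equivariant cohomology of $Y^\Gamma$, which splits as a free module over the localized coefficient ring) is unipotent, and restricting to a single graded piece gives that the $G/\Gamma$-action on $H^*(Y^\Gamma;\FF_p)$ is unipotent. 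The main obstacle is making the identification of the $G$-action under the localization isomorphism precise — in particular checking that the localization isomorphism is genuinely $G$-equivariant and that no twist by the $G$-action on $\Gamma$ (which is trivial here only because of centrality) creeps into the coefficient ring — and handling the extra degree-one generator when $p$ is odd. An alternative, perhaps cleaner, route for (2) is to run the Serre spectral sequence of $Y^\Gamma\hookrightarrow Y\to\cdots$ differently, or to use the fact that $Y^\Gamma$ can be built $G$-equivariantly and argue combinatorially on an equivariant triangulation; but the localization argument is the most robust and is the one I would write up.
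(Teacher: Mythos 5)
Your overall route is the same as the paper's: Borel construction, Leray--Serre spectral sequence for $Y_\Gamma\to B\Gamma$, localization to compare $H^*_\Gamma(Y)$ with $H^*_\Gamma(Y^\Gamma)\simeq H^*(B\Gamma)\otimes H^*(Y^\Gamma)$, $G$-equivariance of everything via centrality of $\Gamma$ for part (2), and Lemma \ref{lemma:Euler-fixed-point-set-cyclic} for part (3). (The paper uses the degreewise form of localization --- the restriction $H^k_\Gamma(Y)\to H^k_\Gamma(Y^\Gamma)$ is an isomorphism for $k>\dim Y$, and one works in the single degree $k=\dim Y+1$ --- rather than inverting $t$, which also sidesteps the issue that your exponent of nilpotence on $H^k_\Gamma(Y)$ grows with $k$, so no single power of $1-g^*$ need kill the whole localized module.)

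There is, however, one concrete error that recurs in both your part (1) and your part (2): you assert that the $E_2$-page is $H^*(B\Gamma;\FF_p)\otimes H^*(Y;\FF_p)$. That identification requires the local coefficient system $\hH^b(Y)=E\Gamma\times_\Gamma H^b(Y)$ to be trivial, i.e.\ it requires the action of $\Gamma$ on $H^*(Y;\FF_p)$ to be trivial ($\FF_p$-CT). The hypothesis of the theorem is only that the action is $\FF_p$-CU, and the paper explicitly warns that in the course of the argument one cannot avoid actions that are $\FF_p$-CU but not $\FF_p$-CT --- indeed part (2) of this very theorem only propagates unipotence, not triviality, so in the recursive use (Corollary \ref{cor:cohom-no-augmenta}) the local system will in general be nontrivial. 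The correct $E_2$-page is $H^a(B\Gamma;\hH^b(Y))$ with twisted coefficients, and the bound $\dim H^a(B\Gamma;\hH^b(Y))\leq\dim H^b(Y)$ is exactly what the first half of the paper's proof establishes, by filtering $H^b(Y)$ by the subspaces $F^j=\Ker(1-\gamma^*)^j$, observing that the graded pieces are trivial local systems, and summing the resulting inequalities. (Alternatively one can quote the general fact that $H^a(\ZZ/p;M)$ is a subquotient of $M$ for any $\FF_p[\ZZ/p]$-module $M$.) The same correction is needed in your part (2): the statement to prove is that $(1-g^*)^r$ annihilates $H^a(B\Gamma;\hH^b(Y))$ because it annihilates the coefficient module, not that the action is ``trivial on the first tensor factor and unipotent on the second.'' With that repair, your argument matches the paper's.
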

\begin{proof}
All cohomology groups in this proof will be with coefficients in $\FF_p$.
The Leray--Serre spectral sequence for the Borel construction
$Y_\Gamma\to B\Gamma$ has second page $E_2^{ab}=H^a(B\Gamma;\hH^b(Y))$, where
$\hH^b(Y)\to B\Gamma$ is the local system $E\Gamma\times_\Gamma H^b(Y)$.
We claim that
$\dim_{\FF_p}E^{ab}_2\leq \dim_{\FF_p}H^b(Y)$.
Let $\gamma\in \Gamma$ be a generator, and let $\gamma^*\colon H^b(Y)\to H^b(Y)$
be the induced morphism. Define $F^j:=\Ker(1-\gamma^*)^j\subseteq H^b(Y)$.
Since $\gamma^*$ commutes with $1-\gamma^*$, $\gamma^*(F^j)\subseteq F^j$. Furthermore,
$(1-\gamma^*)F^j\subseteq F^{j-1}$, so $\gamma^*$ acts as the identity on $F^j/F^{j-1}$.
Let $\fF^j=B\Gamma\times_\Gamma F^j\to B\Gamma$. Let $r\in\NN$ be the smallest number such
that $(1-\gamma^*)^r=0$. Then
$\fF^1\subseteq\fF^2\subseteq\dots\subseteq\fF^r=\hH^b(Y)$ is a filtration
of local systems of $\FF_p$-vector spaces. Taking cohomology on the short
exact sequence of local systems
$0\to\fF^{j-1}\to\fF^j\to \fF^j/\fF^{j-1}\to 0$
we get
$$\dots\to H^a(B\Gamma;\fF^{j-1})\to H^a(B\Gamma;\fF^{j})\to
H^a(B\Gamma;\fF^j/\fF^{j-1})\to\dots,$$
which implies
$\dim H^a(B\Gamma;\fF^{j})\leq \dim H^a(B\Gamma;\fF^{j-1})+\dim H^a(B\Gamma;\fF^j/\fF^{j-1}).$
Since $\gamma^*$ acts as the identity on $F^j/F^{j-1}$, the local system $\fF^j/\fF^{j-1}$
is globally trivial with fiber $F^j/F^{j-1}$, so
$H^a(B\Gamma;\fF^j/\fF^{j-1})\simeq H^a(B\Gamma)\otimes_{\FF_p}F^j/F^{j-1}\simeq F^j/F^{j-1}$
(here we use that $H^j(B\Gamma)\simeq\FF_p$ for every $j\geq 0$). Hence
$$\dim H^a(B\Gamma;\fF^{j})\leq \dim H^a(B\Gamma;\fF^{j-1})+\dim F^j/F^{j-1}.$$
Summing over $j$ we get $\dim H^a(B\Gamma;\hH^b(Y))\leq\dim H^b(Y)$, so the
claim is proved.

Since the
Leray--Serre spectral sequence converges to $H^*_\Gamma(Y)$,
the claim implies that
$$\dim H^k_\Gamma(Y)\leq \sum_{j=0}^k \dim H^j(Y).$$
The inclusion of the fixed point set $Y^\Gamma\to Y$ induces, for any $k>\dim Y$,
an isomorphism $H^k_\Gamma(Y)\to H^k_\Gamma(Y^\Gamma)$
(this is standard: apply Mayer--Vietoris for
the equivariant cohomology of $Y$ to the covering given by a tubular neighborhood of $Y^\Gamma$
and $Y\setminus Y^\Gamma$; then use that the equivariant cohomology of a free action coincides
with the usual cohomology of the quotient, and finally take into account that the cohomology of
a manifold vanishes in degrees bigger than the dimension).
But $H^*_\Gamma(Y^\Gamma)\simeq H^*(B\Gamma)\otimes_{\FF_p}H^*(Y^\Gamma)$,
so $\dim H^k_\Gamma(Y^\Gamma)=\sum_{j=0}^k\dim H^j(Y^\Gamma)$. Combining this with the previous
inequality, and using the fact that the cohomologies of $Y^\Gamma$ and $Y$ vanish in
dimensions $>\dim Y$, we get
$$\sum_j \dim H^j(Y^\Gamma) \leq \sum_j \dim H^j(Y),$$
so (1) is proved.

We now prove (2). Since $\Gamma\subseteq Z(G)$, the action of $G$ on $Y$ extends to an action
on the Borel construction $Y_\Gamma$ lifting the trivial action on $B\Gamma$. It follows that
$G$ acts naturally on the Leray--Serre spectral sequence for $Y_\Gamma\to B\Gamma$.
Let $g\in G$ be any element
and let $g^*\colon H^*(Y)\to H^*(Y)$ be the map induced by its action on $Y$.
By assumption, there exists some $r$ such that $(1-g^*)^r\colon H^*(Y)\to H^*(Y)$
is the zero map.
This implies that $(1-g^*)^r$ is the zero map on $H^a(B\Gamma;\hH^b(Y))$, and by the naturality
of the spectral sequence it is also the zero map on $E^{ab}_{\infty}$. By convergence, for any
$k$ there is a natural filtration $L^k_k\subseteq L^k_1\subseteq\dots\subseteq
 L^k_k=H^k_\Gamma(Y)$
and a natural isomorphism $L^k_i/L^k_{i+1}\simeq E^{i,k-i}_{\infty}$. It follows that
$(1-g^*)^r$ is the zero map on $\bigoplus_i L^k_i/L^k_{i+1}$, which implies that
$(1-g^*)^{rk}$ is the zero map on $H^k_\Gamma(Y)$.

Let $k:=\dim Y+1$.
Since $\Gamma\subseteq Z(G)$, the fixed point set $Y^\Gamma$ is $G$ invariant,
so the inclusion $Y^\Gamma\to Y$ is a $G$-equivariant map. Hence the isomorphism
$H^k_\Gamma(Y)\to H^k_\Gamma(Y^\Gamma)$ is $G$-equivariant. It follows that
$(1-g^*)^{rk}$ is the zero map on $H^k_\Gamma(Y)$. Using the fact that the isomorphism
$H^*_\Gamma(Y^\Gamma)\simeq H^*(B\Gamma)\otimes_{\FF_p}H^*(Y^\Gamma)$
is $G$-equivariant and that $G$ acts
trivially on $H^*(B\Gamma)$ (because $\Gamma\subseteq Z(G)$),
we deduce that $(1-g^*)^{rk}$ is the zero
map on $H^k_\Gamma(Y^\Gamma)\simeq \bigoplus_j H^j(Y^\Gamma)$. Hence (2) is proved.

Finally, (3) follows from Lemma \ref{lemma:Euler-fixed-point-set-cyclic}.
\end{proof}

\begin{corollary}
\label{cor:cohom-no-augmenta}
Let $X$ be a manifold and let $p$ be a prime.
Let $G$ be a $p$-group acting on $X$ in a $\FF_p$-CU way.
We have
$$\sum_j b_j(X^G;\FF_p)\leq \sum_j b_j(X;\FF_p)$$
and $\chi(X^G)\equiv \chi(X)\mod p$.
\end{corollary}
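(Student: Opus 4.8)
The plan is to deduce Corollary~\ref{cor:cohom-no-augmenta} from Theorem~\ref{thm:accions-unipotents} by an induction on the order of the $p$-group $G$. The base case $|G|=1$ is trivial since then $X^G=X$. For the inductive step, suppose $|G|=p^n$ with $n\geq 1$ and that the result holds for all $p$-groups of smaller order acting in an $\FF_p$-CU way on any manifold. Since $G$ is a nontrivial $p$-group, its center $Z(G)$ is nontrivial, so we may choose a subgroup $\Gamma\subseteq Z(G)$ with $\Gamma\simeq\ZZ/p\ZZ$. Theorem~\ref{thm:accions-unipotents} applies to the pair $(\Gamma\subseteq Z(G),\ G\curvearrowright X)$ and yields three facts: $\sum_j b_j(X^\Gamma;\FF_p)\leq\sum_j b_j(X;\FF_p)$, the induced action of $G/\Gamma$ on $X^\Gamma$ is $\FF_p$-CU, and $\chi(X^\Gamma)\equiv\chi(X)\bmod p$.

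Now $X^\Gamma$ is again a compact manifold (it is a neat submanifold of $X$ by Lemma~\ref{lemma:linearization}(1), applied to the smooth action of $\Gamma$), and $G/\Gamma$ is a $p$-group of order $p^{n-1}<|G|$ acting smoothly and $\FF_p$-CU on it. Moreover the fixed point set of this quotient action is exactly $X^G$: an element of $X^\Gamma$ is fixed by $G/\Gamma$ iff it is fixed by every element of $G$, which since $\Gamma\subseteq G$ is the same as being fixed by $G$. So the inductive hypothesis applied to $G/\Gamma\curvearrowright X^\Gamma$ gives
$$\sum_j b_j(X^G;\FF_p)\leq\sum_j b_j(X^\Gamma;\FF_p)\quad\text{and}\quad \chi(X^G)\equiv\chi(X^\Gamma)\bmod p.$$
Chaining these with the inequality and congruence from Theorem~\ref{thm:accions-unipotents} completes the step: $\sum_j b_j(X^G;\FF_p)\leq\sum_j b_j(X^\Gamma;\FF_p)\leq\sum_j b_j(X;\FF_p)$ and $\chi(X^G)\equiv\chi(X^\Gamma)\equiv\chi(X)\bmod p$.

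There is essentially no serious obstacle here; the only point requiring a little care is to verify that the hypotheses of Theorem~\ref{thm:accions-unipotents} and of the inductive hypothesis are genuinely met at each stage --- namely that $X^\Gamma$ is still a compact manifold (not merely a closed subset), that the quotient group $G/\Gamma$ really is a $p$-group of smaller order, and that the $\FF_p$-CU property descends to the quotient action, all of which are supplied directly by Lemma~\ref{lemma:linearization} and by clauses (2) of Theorem~\ref{thm:accions-unipotents}. One should also note that when $G$ is abelian the subgroup $\Gamma\subseteq Z(G)=G$ always exists, but the argument as given works for arbitrary $p$-groups since $Z(G)\neq 1$; if one only wants the abelian case the same proof applies verbatim. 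Thus the corollary follows by a clean descending induction, with Theorem~\ref{thm:accions-unipotents} doing all the real work at each level.
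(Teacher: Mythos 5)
Your proof is correct and is essentially the paper's own argument: the paper takes a maximal central filtration $1=G_0\subsetneq\dots\subsetneq G_k=G$ and applies Theorem \ref{thm:accions-unipotents} recursively to $G_j/G_{j-1}$ acting on $X^{G_{j-1}}$, which is precisely your induction on $|G|$ unrolled. The points you flag for care (smoothness of the induced action on the submanifold $X^\Gamma$, the $\FF_p$-CU property descending, and $(X^\Gamma)^{G/\Gamma}=X^G$) are exactly the ones implicitly used there, so there is nothing to add.
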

\begin{proof}
Take a maximal filtration
$1=G_0\subsetneq G_1\subsetneq G_2\subsetneq \dots \subsetneq G_k=G$
such that $G_j/G_{j-1}$ is central in $G/G_{j-1}$ for each $j$ (such filtrations
exist because $p$-groups are nilpotent), and
apply recursively Lemma \ref{thm:accions-unipotents} to
$G_j/G_{j-1}\subset G/G_{j-1}$ acting on $X^{G_{j-1}}$,
beginning with $j=1$ and increasing $j$ one unit at each step.
\end{proof}

\section{$K$-stable actions of abelian groups}
\label{s:K-stable-actions}

Recall that if $X$ is a non necessarily connected manifold we call the
dimension of $X$ (denoted by $\dim X$) the maximum of the dimensions of the connected components
of $X$.

\subsection{Preliminaries}

\begin{lemma}
\label{lemma:chains-inclusions}
Given $m,k\in\NN$ there exists $C_M(m,k)\in\NN$ with the following property.
If $X$ is a manifold of dimension $m$,
$X_1\subsetneq X_2\subsetneq\dots\subsetneq X_r\subsetneq X$
are strict inclusions of neat\footnote{See \S 1.4 in \cite{H}.}
submanifolds, and each $X_i$ has
at most $k$ connected components, then $r\leq C_M(m,k)$.
\end{lemma}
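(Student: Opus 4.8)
The plan is to bound the length $r$ of a strict chain of neat submanifolds by controlling, at each step of the chain, a quantity that must strictly decrease (or strictly increase) and that takes only finitely many values, the number of possibilities being a function of $m$ and $k$ alone. The natural candidate is the pair consisting of a connected component of $X_i$ together with its dimension: since $X_i\subsetneq X_{i+1}$ and both are neat submanifolds of the $m$-dimensional manifold $X$, each connected component of $X_i$ is contained in a unique connected component of $X_{i+1}$, and this containment is again strict on at least one component, so either the dimension goes up or, at equal dimension, a proper inclusion of manifolds of the same dimension forces the smaller one to be a union of components of (an open subset of) the bigger — here one has to be slightly careful because of the boundary, but neatness is exactly what makes this work cleanly.

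More concretely, first I would fix, for each $i$, a component $Z_i$ of $X_i$ and a component $W_i$ of $X_{i+1}$ with $Z_i\subseteq W_i$, chosen compatibly so that the $Z_i$ nest: $Z_1\subseteq Z_2\subseteq\dots\subseteq Z_r$. The key local observation is that if $Z\subseteq W$ is an inclusion of connected neat submanifolds of $X$ with $\dim Z=\dim W$, then $Z$ is open in $W$; since $Z$ is also closed in $W$ (being a closed submanifold, as $X$ is compact and the submanifolds are neat hence closed), $Z=W$. Therefore along the chain $Z_1\subseteq Z_2\subseteq\dots\subseteq Z_r$, whenever two consecutive terms coincide we may collapse them, and at every genuine strict inclusion the dimension strictly increases. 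Hence the number of distinct dimensions appearing is at most $m+1$, and between two consecutive distinct dimensions the chain can only repeat a bounded number of times — but in fact once we pass to the collapsed chain of distinct terms the dimension strictly increases at each step, giving length at most $m+1$ for that subchain.

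That handles a single thread; the subtlety is that distinct components of $X_i$ can merge into the same component of $X_{i+1}$, so bounding one thread does not immediately bound $r$. To fix this I would argue as follows: at each step $X_i\subsetneq X_{i+1}$ at least one of the following happens — (a) some component of $X_i$ sits inside a strictly larger-dimensional component of $X_{i+1}$, or (b) two distinct components of $X_i$ lie in the same component of $X_{i+1}$, or (c) a component of $X_{i+1}$ contains no component of $X_i$ at all (strictly new component), or (d) some component of $X_i$ is strictly contained, with equal dimension, in a component of $X_{i+1}$ — but case (d) is impossible by the openness-plus-closedness argument above. Assign to $X_i$ the finite invariant $\nu(X_i)=\bigl(\text{number of components of }X_i,\ \text{multiset of their dimensions}\bigr)$, which lives in a set of size bounded in terms of $m$ and $k$ (each $X_i$ has at most $k$ components, each of dimension in $\{0,1,\dots,m\}$). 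Each of the events (a), (b), (c) strictly changes $\nu$ in a monotone way — (a) increases the dimension multiset in the lexicographic order on sorted tuples, (b) decreases the component count, (c) increases it but (b)+(c) together with the component bound $k$ give only finitely many alternations — so $\nu(X_1),\dots,\nu(X_r)$ are forced to move through a bounded poset without repetition; choosing a suitable weight function (e.g. $k\cdot(\text{number of components}) + \sum(\dim\text{-contributions})$) that strictly increases at every step gives $r\le C_M(m,k)$ with an explicit, if unoptimized, bound.

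The main obstacle I expect is precisely the bookkeeping in the last paragraph: ruling out the equal-dimension strict inclusion on a single component (which needs neatness, used via the fact that neat submanifolds of a compact manifold are closed and that an inclusion of equidimensional submanifolds is open), and then packaging the component-merging/splitting behavior into a single strictly monotone integer invariant. None of the individual points is deep, but getting a clean strictly decreasing quantity — rather than a hand-wave that "things change" — is where the care goes; once such an invariant is in place, $C_M(m,k)$ is just the size of the range of that invariant, plus one.
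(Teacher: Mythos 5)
Your overall strategy is the paper's: attach to each $X_i$ a finite invariant built from the number of connected components in each dimension, show it is strictly monotone along the chain, and bound $r$ by the size of the set of possible values. The equidimensional rigidity step (a connected neat submanifold $Z\subseteq W$ with $\dim Z=\dim W$ is open and closed in $W$, hence equal to it) is also exactly the ingredient the paper relies on. So the skeleton is right.

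The gap is the one you yourself flag: you never actually exhibit a strictly monotone invariant, and the one concrete candidate you offer, $k\cdot(\text{number of components})+\sum(\text{dimensions})$, is not monotone. Take $\dim X=5$, let $X_{i-1}$ be two disjoint $3$-spheres contained in a neat $4$-sphere $W$, and let $X_i=W$: the component count drops from $2$ to $1$ and the dimension sum drops from $6$ to $4$, so your weight strictly decreases at this step, while at a later step that adjoins an isolated point it strictly increases; no linear combination of these two quantities can work, since both decrease under merging and both (weakly) increase when a new component appears. The missing idea, and what the paper uses, is the vector $d(X_i)=(d_{m-1}(X_i),\dots,d_0(X_i))$ of component counts per dimension, ordered \emph{lexicographically with the highest dimension most significant}. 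One then checks two cases: if no component of $X_{i-1}$ sits in a strictly higher-dimensional component of $X_i$, rigidity forces $X_i=X_{i-1}\sqcup X_i'$ with $X_i'\neq\emptyset$, and the vector increases coordinatewise; otherwise, letting $l$ be the largest dimension of a component of $X_i$ that receives a strictly lower-dimensional component of $X_{i-1}$ (in particular any merging lands here, since by your own rigidity argument the receiving component must exceed the dimension of \emph{both} merged pieces), one gets $d_\delta(X_i)\geq d_\delta(X_{i-1})$ for all $\delta>l$ and $d_l(X_i)>d_l(X_{i-1})$; any losses occur only in coordinates below $l$, which the lexicographic order ignores. This single order handles your cases (a), (b), (c) uniformly and yields $r\leq\bigl|\{d\in\ZZ_{\geq 0}^m:\textstyle\sum_j d_j\leq k\}\bigr|$.
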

\begin{proof}
Let $X_1\subsetneq X_2\subsetneq\dots\subsetneq X_r\subsetneq X$ be as in the statement of the lemma.
For any $i$ let $d(X_i)=(d_{m-1}(X_i),\dots,d_{0}(X_i))\in\ZZ_{\geq 0}^m$, where
$d_{j}(X_i)$ denotes the number of connected components of $X_i$ of dimension $j$.
Each $X_i$ has at most $k$ connected components, so $d(X_i)$ belongs to
$\dD=\{(d_1,\dots,d_m)\in\ZZ_{\geq 0}^m\mid \sum d_j\leq k\}$.
Consider the lexicographic order on $\ZZ_{\geq 0}^m$, so that
$(a_1,\dots,a_m)>(b_1,\dots,b_m)$ if there is some $1\leq l\leq m$ such that
$a_j=b_j$ for any $j\leq l-1$ and $a_l>b_l$. We claim that for each $i$ we have
$d(X_i)>d(X_{i-1})$.
To prove the claim, let us denote by
$X_{i-1,1},\dots,X_{i-1,r}$ (resp. $X_{i,1},\dots,X_{i,s}$) the connected components of
$X_{i-1}$ (resp. $X_i$), labelled in such a way that
$\dim X_{i-1,j-1}\geq \dim X_{i-1,j}$ and $\dim X_{i,j-1}\geq \dim X_{i,j}$
for each $j$.
Since $X_{i-1}\subset X_i$, there exists a map $f\colon \{1,\dots,r\}\to\{1,\dots,s\}$
such that $X_{i-1,j}\subset X_{i,f(j)}$, which implies that $\dim X_{i-1,j}\leq\dim X_{i,f(j)}$.
Let $J$ be the set of indices $j$ such that $\dim X_{i-1,j}<\dim X_{i,f(j)}$.
We distinguish two cases.
\begin{itemize}
\item {\bf Case 1.}
If $J=\emptyset$, so that
$\dim X_{i-1,j}=\dim X_{i,f(j)}$ for each $j$, then $X_{i-1}\neq X_i$ implies that
$X_i=X_{i-1}\sqcup X_i'$ for some nonempty and possibly disconnected $X_i'\subset X$,
because by assumption $X_{i-1}$ is a neat submanifold of $X$.
This implies that $d_{\delta}(X_i)\geq d_{\delta}(X_{i-1})$ for each $\delta$, and
the inequality is strict for at least one $\delta$. Hence $d(X_i)>d(X_{i-1})$.
\item {\bf Case 2.} Suppose that $J\neq\emptyset$.
Let $l=\dim X_{i,\min f(J)}$. If $l+1\leq\delta\leq m-1$ then any
$\delta$-dimensional connected component of $X_{i-1}$ is also a connected
component of $X_i$, so
$d_{\delta}(X_i)\geq d_{\delta}(X_{i-1})$
(this is not necessarily an equality, since there might be some $\delta$-dimensional
connected component of $X_i$ which does not contain any connected component of $X_{i-1}$),
whereas $d_l(X_i)>d_l(X_{i-1})$. This implies again that $d(X_i)>d(X_{i-1})$, so the proof
of the claim is complete.
\end{itemize}
The claim implies that $r\leq |\dD|$, so letting $C_M(m,k):=|\dD|$ the
lemma is proved.
\end{proof}

\begin{lemma}
\label{lemma:isotropy-subgroups}
Let $X$ be a 
manifold and
let $r\in\NN$. There exists an integer $C_S(X,r)$ with the following property.
Let a $p$-group $\Gamma$ act in a $\FF_p$-CU way on $X$ and let
$$\sS_r(X)=\{\Gamma'\subseteq  \Gamma\mid \exists x\in X\text{ such
that }\Gamma'=\Gamma_x,\text{ and }[\Gamma:\Gamma']\leq p^r\}.$$
Then $|\sS_r(X)|\leq C_S(X,r)$.
\end{lemma}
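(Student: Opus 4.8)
The plan is to bound the number of possible isotropy subgroups $\Gamma'=\Gamma_x$ with $[\Gamma:\Gamma']\le p^r$ by controlling, uniformly over all $\FF_p$-CU actions of $p$-groups on $X$, the number of possible fixed point sets $X^{\Gamma'}$ they can cut out, together with the number of distinct isotropy subgroups that can share a given connected component of such a fixed point set. Note first that if $\Gamma'=\Gamma_x$ then $x\in X^{\Gamma'}$, so it suffices to understand, for each point $x$ lying in ``many'' fixed loci, how the collection $\{\Gamma_x\}$ can vary.

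\textbf{Step 1: reduce to a counting problem on fixed loci.} For a subgroup $\Gamma'\subseteq\Gamma$ with $[\Gamma:\Gamma']\le p^r$, Corollary \ref{cor:cohom-no-augmenta} applies to the $\FF_p$-CU action of $\Gamma'$ on $X$ (the restriction of an $\FF_p$-CU action is $\FF_p$-CU), giving $\sum_j b_j(X^{\Gamma'};\FF_p)\le\sum_j b_j(X;\FF_p)=:B$. In particular $X^{\Gamma'}$ has at most $B$ connected components, a bound depending only on $X$. Moreover each $X^{\Gamma'}$ is a neat submanifold of $X$ by Lemma \ref{lemma:linearization}(1). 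So every $\Gamma'\in\sS_r(X)$ is realized inside a chain of neat submanifolds of $X$ each having at most $B$ components, and by Lemma \ref{lemma:chains-inclusions} any strictly increasing such chain has length at most $C_M(\dim X,B)$.

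\textbf{Step 2: bound the number of distinct fixed loci and of subgroups per locus.} Order $\sS_r(X)=\{\Gamma_1,\dots,\Gamma_N\}$ and consider the fixed loci $X^{\Gamma_i}$. Two different subgroups may give the same fixed locus, so I separate two tasks: (i) bound the number of \emph{distinct} subsets of $X$ arising as $X^{\Gamma'}$ with $\Gamma'\in\sS_r(X)$, and (ii) bound, for fixed $Z=X^{\Gamma'}$, the number of $\Gamma'\in\sS_r(X)$ with that fixed locus. For (ii): if $X^{\Gamma_i}=X^{\Gamma_j}=Z$ pick $x$ in a component of $Z$; then $\Gamma_i,\Gamma_j\subseteq\Gamma_x$ and $[\Gamma:\Gamma_x]\le p^r$, so all such subgroups sit in one group of order $\le p^r$, which has at most $2^{r\cdot(\text{something})}$ — in fact at most $p^{r^2}$ crudely, or simply a bound $f(r)$ depending only on $r$ — subgroups. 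For (i): group the $X^{\Gamma_i}$ by the partially ordered set of inclusions; an antichain of distinct neat submanifolds each with $\le B$ components and with the property that any two are comparable or else their union is again realized as a fixed locus must itself be controlled. The cleanest route is to observe that $\bigcap_{i\in I}X^{\Gamma_i}=X^{\langle\Gamma_i:i\in I\rangle}$, and each such intersection is again a fixed locus of a (not necessarily index-$\le p^r$) subgroup, hence still a neat submanifold with $\le B'$ components for a suitable $B'$ depending only on $X$; combining this with Lemma \ref{lemma:chains-inclusions} bounds the height of the poset, and a standard lattice argument bounds the whole poset in terms of its height and the branching, which is in turn controlled by the number of components.

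\textbf{Step 3: assemble.} Setting $C_S(X,r)$ to be the product of the bound from (i) and the bound $f(r)$ from (ii) gives $|\sS_r(X)|\le C_S(X,r)$, a constant depending only on $X$ and $r$ as claimed.

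\textbf{Main obstacle.} The delicate point is Step 2(i): controlling the number of distinct fixed loci $X^{\Gamma'}$ uniformly over \emph{all} $p$-groups $\Gamma$ and all primes $p$. The Betti-number bound via Corollary \ref{cor:cohom-no-augmenta} and the neatness from Lemma \ref{lemma:linearization} handle the ``vertical'' direction (chain length) via Lemma \ref{lemma:chains-inclusions}, but bounding the ``horizontal'' spread of an antichain of fixed loci requires an extra argument — most likely exploiting that each fixed locus is cut out inside a tubular neighborhood by a linear subspace (Lemma \ref{lemma:linearization}(1),(3)) so that near a common point the possible fixed loci correspond to subrepresentations of a single $T_xX$, whose number is bounded in terms of $\dim X$ alone. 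I expect the proof to interleave this local-linearity input with the global Betti bound, rather than using either one in isolation.
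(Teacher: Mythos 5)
Your Step 1 matches the paper's setup exactly (the Betti-number bound from Corollary \ref{cor:cohom-no-augmenta}, neatness from Lemma \ref{lemma:linearization}, and the chain-length bound $C_M$ from Lemma \ref{lemma:chains-inclusions}), but the heart of the argument --- your Step 2(i) --- is missing, and you essentially concede this in your ``Main obstacle'' paragraph. There is no ``standard lattice argument'' bounding a poset by its height and branching: a poset of height $1$ can be an arbitrarily wide antichain, and the bound on the number of connected components of each individual fixed locus says nothing about how many pairwise incomparable loci there can be. Your proposed rescue (local linearity near a common point, so that the loci correspond to subrepresentations of a single $T_xX$) also fails, because distinct loci in an antichain need not share any point. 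The paper closes exactly this gap with a different device: it intersects the \emph{subgroups} rather than comparing the loci. Starting from any $\Gamma_0\in\sS_r(X)$ it sets $\Gamma(i)=\Gamma_0\cap\dots\cap\Gamma_i$; since $[\Gamma:\Gamma(i)]\leq p^{rC_M}$ throughout, the number of subgroups of $\Gamma$ \emph{containing} $\Gamma(i)$ is at most $2^{p^{rC_M}}$, so if $|\sS_r(X)|>C_M+2^{p^{rC_M}}$ one can always choose a new $\Gamma_{i+1}\in\sS_r(X)$ that does not contain $\Gamma(i)$; a point $x$ with $\Gamma_x=\Gamma_{i+1}$ then lies in $X^{\Gamma(i+1)}\setminus X^{\Gamma(i)}$, so the fixed loci of the running intersections form a strictly increasing chain of neat submanifolds, contradicting Lemma \ref{lemma:chains-inclusions} after $C_M$ steps. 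This converts ``too many subgroups'' directly into ``too long a chain,'' bypassing any need to control antichains.

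A secondary error: in Step 2(ii) you assert that all subgroups with a common fixed locus ``sit in one group of order $\le p^r$.'' The group $\Gamma_x$ has \emph{index} at most $p^r$ in $\Gamma$, not order at most $p^r$, and the number of subgroups of index $\leq p^r$ in a $p$-group is not bounded by any function of $p$ and $r$ (count the index-$p$ subgroups of $(\ZZ/p\ZZ)^n$). The conclusion of (ii) happens to be salvageable, and in fact trivially: if $\Gamma'=\Gamma_x$, $\Gamma''=\Gamma_y$ and $X^{\Gamma'}=X^{\Gamma''}$, then $y\in X^{\Gamma'}$ gives $\Gamma'\subseteq\Gamma_y=\Gamma''$, and symmetrically $\Gamma''\subseteq\Gamma'$, so $\Gamma'\mapsto X^{\Gamma'}$ is injective on $\sS_r(X)$. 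But neither this observation nor a correct version of (i) appears in your write-up, so the proposal as it stands does not constitute a proof.
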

\begin{proof}
Let $X$ and $\Gamma$ be as in the statement of the lemma.
By Corollary \ref{cor:cohom-no-augmenta}, for any subgroup $\Gamma'\subseteq \Gamma$
we have
\begin{equation}
\label{eq:def-k-p-X}
b_0(X^{\Gamma'};\FF_p)\leq \sum_j b_j(X^{\Gamma'};\FF_p)\leq
\sum_j b_j(X;\FF_p)\leq k(X):=\sum_j b_j^{\infty}(X),
\end{equation}
where $b_j^{\infty}(X)=\max_p\{b_j(X;\FF_p)\}$.
Hence $X^{\Gamma'}$ has at most $k(X)$ connected components.
Let $C_M=C_M(\dim X,k(X))$ be the number defined by Lemma \ref{lemma:chains-inclusions}.
We claim that
$$C_S(X,r):=C_M+2^{p^{rC_M}}$$ is an upper bound on $|\sS_r(X)|$.
Indeed, suppose that $|\sS_r(X)|>C_S(X,r)$. We are going to construct
a sequence of elements $\Gamma_0,\Gamma_1,\dots,\Gamma_{C_M}\in\sS_r(X)$ such that, defining
$\Gamma(i):=\Gamma_0\cap\dots\cap \Gamma_i$, we have $X^{\Gamma(i-1)}\subsetneq X^{\Gamma(i)}$
for each $i\geq 1$. This will contradict Lemma \ref{lemma:chains-inclusions}
(indeed, by (1) in Lemma \ref{lemma:linearization} each $X^{\Gamma(i)}$ is a neat
submanifold of $X$), so the proof that $|\sS_r(X)|\leq C_S(X,r)$ will be complete.

We construct the sequence $\Gamma_j$ using induction on $j$.
Choose $\Gamma_0$ to be any element in $\sS_r(X)$.
Let $1\leq j\leq C_M$,
and assume that $\Gamma_0,\dots,\Gamma_{j-1}\in\sS_r(X)$ have been chosen.
Then $\Gamma(j-1)$ has index at most $p^{rj}$ in $\Gamma$, so the set
$\Gamma/\Gamma(j-1)$ has at most $p^{rj}\leq p^{rC_M}$ elements. This implies that
the number of subgroups of $\Gamma$ which contain $\Gamma(j-1)$ is at most
$2^{p^{rC_M}}$, because each subgroup $\Gamma'\subseteq\Gamma$
containing $\Gamma(j-1)$ projects to
a subset of $\Gamma/\Gamma(j-1)$, and $\Gamma'$ can be recovered from this subset.
Since we are assuming that $\sS_r(X)$ has more than $C_M+2^{p^{rC_M}}$ elements,
there must exist some $\Gamma_j\in\sS_r(X)$ which is neither any of
the subgroups $\Gamma_0,\dots,\Gamma_j$ nor a subgroup of $\Gamma$ containing $\Gamma(j-1)$.
To finish the argument, we prove that $X^{\Gamma(j-1)}\subsetneq X^{\Gamma(j)}$.
Since $\Gamma_j\in\sS_r(X)$, there exists some $x\in X$ such that $\Gamma_x=\Gamma_j$.
And since $\Gamma(j-1)$ is not contained in $\Gamma_j$, $x$ does not belong to
$X^{\Gamma(j-1)}$ (indeed, if $x$ were contained in $X^{\Gamma(j-1)}$ then we would
have $\Gamma(j-1)\subseteq\Gamma_x$). But since $\Gamma(j)\subseteq \Gamma_j$, we have
$x\in X^{\Gamma(j)}$. In conclusion, $x\in X^{\Gamma(j)}\setminus X^{\Gamma(j-1)}$,
so $X^{\Gamma(j-1)}\subsetneq X^{\Gamma(j)}$.
\end{proof}

\begin{lemma}
\label{lemma:exists-Gamma-chi-p}
Let $X$ be a 
manifold, and let $p$ be any prime
number. There exists a number $C_{p,\chi}\in\NN$, depending on $X$ and $p$, with the
following property. For any $p$-group $\Gamma$ acting on $X$ in a $\FF_p$-CU way, there
exists a subgroup $\Gamma_{\chi}\subseteq \Gamma$ of index at most $C_{p,\chi}$ such that
for any $\Gamma_0\subseteq \Gamma_{\chi}$ we have
$\chi(X^{\Gamma_0})=\chi(X).$
Furthermore, there exists a constant $P_{\chi}$, depending only on $X$, such that
if $p\geq P_{\chi}$ then $C_{p,\chi}=1$.
\end{lemma}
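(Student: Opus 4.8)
The plan is to play off two quantities against each other: the function $\chi(X^{\Gamma_0})$ as $\Gamma_0$ ranges over subgroups of $\Gamma$, and the combinatorial control on how many distinct fixed-point loci can arise. First I would observe, using Corollary \ref{cor:cohom-no-augmenta}, that for \emph{every} subgroup $\Gamma_0\subseteq\Gamma$ we have $\chi(X^{\Gamma_0})\equiv\chi(X)\bmod p$, so $\chi(X)-\chi(X^{\Gamma_0})$ is always a multiple of $p$; moreover the same corollary bounds $\sum_j b_j(X^{\Gamma_0};\FF_p)$ by $k(X):=\sum_j b_j^\infty(X)$, which in particular bounds $|\chi(X^{\Gamma_0})|$ independently of $\Gamma$ and $\Gamma_0$. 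Combining these, once $p$ exceeds $P_\chi:=2\max_Y\{k(X)\}$ (or any constant majorizing all the possible values of $|\chi(X)-\chi(X^{\Gamma_0})|$), the congruence forces $\chi(X^{\Gamma_0})=\chi(X)$ for \emph{every} subgroup $\Gamma_0$, and we may take $\Gamma_\chi=\Gamma$, i.e.\ $C_{p,\chi}=1$. This already settles the ``furthermore'' clause.

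For a fixed small prime $p$ the argument is more delicate, and here the main tool is Lemma \ref{lemma:isotropy-subgroups} together with the observation that the value of $\chi(X^{\Gamma_0})$ only depends on the fixed-point locus $X^{\Gamma_0}$ as a set, and that this locus is determined by the isotropy data of $\Gamma_0$. The key point is that the function $\Gamma_0\mapsto X^{\Gamma_0}$ is \emph{determined by finitely many ``bad'' subgroups}: if $\Gamma_0$ is large enough (high index bound), its fixed locus stabilizes. Concretely, I would let $\bB$ be the set of subgroups $\Gamma'\subseteq\Gamma$ that occur as $\Gamma_x$ for some $x\in X$ and for which $\chi(X^{\Gamma'})\neq\chi(X)$; by the previous paragraph every such $\Gamma'$ has bounded index (since $\chi(X^{\Gamma'})\ne\chi(X)$ forces, via the mod-$p$ congruence and the Betti bound, a bound on $[\Gamma:\Gamma']$ — more carefully, one uses that a subgroup of large index is an intersection of stabilizers each of small index plus a stabilization argument as in Lemma \ref{lemma:isotropy-subgroups}, so $\chi$ is eventually constant equal to $\chi(X)$). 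The cardinality of $\bB$ is then bounded by $C_S(X,r)$ for an appropriate $r$ depending only on $X$ and $p$, via Lemma \ref{lemma:isotropy-subgroups}.

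Given such a bounded collection $\bB$ of ``exceptional'' subgroups, each of index $\le p^{r_0}$ for some $r_0=r_0(X,p)$, I would set $\Gamma_\chi$ to be the intersection of all members of $\bB$. Then $[\Gamma:\Gamma_\chi]\le p^{r_0|\bB|}\le p^{r_0 C_S(X,r_0)}=:C_{p,\chi}$, a bound depending only on $X$ and $p$. It remains to check that every subgroup $\Gamma_0\subseteq\Gamma_\chi$ satisfies $\chi(X^{\Gamma_0})=\chi(X)$: if not, then $X^{\Gamma_0}$ differs from $X$ in Euler characteristic, and by decomposing $\Gamma_0$-fixed points according to the isotropy of points one finds some point $x\in X$ with $\Gamma_x\supseteq\Gamma_0$ and $\chi(X^{\Gamma_x})\ne\chi(X)$, i.e.\ $\Gamma_x\in\bB$; but $\Gamma_x\supseteq\Gamma_0$ does not contain $\Gamma_\chi$ in general, so this requires a small additional argument — namely that the locus where $\chi$ jumps is built from the strata $X^{\Gamma'}$ with $\Gamma'\in\bB$, so that if $\Gamma_0\subseteq\Gamma'$ for every $\Gamma'\in\bB$ then $X^{\Gamma_0}$ cannot "see" any jump. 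The cleanest way to make this precise is to repeat the stabilization argument of Lemma \ref{lemma:isotropy-subgroups}: the fixed loci $X^{\Gamma_0}$ for $\Gamma_0\subseteq\Gamma_\chi$ form a finite poset under inclusion, of length bounded by $C_M(\dim X,k(X))$, and at each step the index drops, so all sufficiently deep $\Gamma_0$ have a common fixed locus on which $\chi=\chi(X)$ by the choice of $P_\chi$-type argument applied after passing to the relevant sub-$p$-group. I expect this last bookkeeping step — precisely identifying the eventual stable value of $\chi(X^{\Gamma_0})$ as $\chi(X)$ and not merely as \emph{some} constant — to be the main obstacle, and it is exactly where one must invoke Lemma \ref{lemma:Euler-fixed-point-set-cyclic} (through Corollary \ref{cor:cohom-no-augmenta}) to pin the value down via the $\bmod\,p$ congruences along a composition series of $\Gamma/\Gamma_0$.
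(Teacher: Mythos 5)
Your first paragraph (the large--$p$ case) is essentially the paper's argument and is fine: for $p>2\sum_j b_j(X;\FF_p)$ the congruence $\chi(X^{\Gamma_0})\equiv\chi(X)\bmod p$ from Corollary \ref{cor:cohom-no-augmenta}, together with the bound $|\chi(X^{\Gamma_0})|\leq\sum_j b_j(X;\FF_p)$, forces equality for every subgroup, so $C_{p,\chi}=1$. The small--$p$ case, however, has a genuine gap, and it is exactly at the point you flag as ``the main obstacle.'' The only divisibility you ever invoke is the mod--$p$ congruence, and for small $p$ (say $p=2$ with $\chi(X)$ large) this is far too weak: you need $\chi(X)-\chi(X^{\Gamma_0})$ to be divisible by $p^{r+1}$, where $r$ is chosen so that $p^{r+1}$ exceeds roughly twice the Betti bound. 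Running Lemma \ref{lemma:Euler-fixed-point-set-cyclic} along a composition series of $\Gamma/\Gamma_0$, as you suggest, still only produces congruences modulo $p$ at each stage and cannot be bootstrapped into divisibility by a higher power of $p$. The mechanism that actually produces the higher power in the paper is an orbit count on a $\Gamma$-good triangulation: $\chi(X)-\chi(X^{\Gamma_0})$ is a signed count of the simplices not fixed by $\Gamma_0$, and each such simplex has $\Gamma$-orbit of size divisible by $p^{r+1}$ \emph{provided} $\Gamma_0$ is contained in every point-stabilizer of index at most $p^r$. This dictates the correct choice of $\Gamma_\chi$: the intersection of \emph{all} stabilizers in $\sS_r(X)$ (finitely many, and each of index at most $p^r$, so the index of the intersection is bounded by Lemma \ref{lemma:isotropy-subgroups}), not the intersection of a set of ``bad'' stabilizers.

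Your set $\bB$ is also problematic in itself. The claim that $\chi(X^{\Gamma'})\neq\chi(X)$ forces a bound on $[\Gamma:\Gamma']$ is unjustified --- the mod--$p$ congruence and the Betti bound give no control on the index, and the parenthetical appeal to ``a stabilization argument as in Lemma \ref{lemma:isotropy-subgroups}'' is circular, since that is essentially the statement being proved. Even granting the bound, your own verification step breaks down where you note that a subgroup $\Gamma_0\subseteq\bigcap\bB$ with $\chi(X^{\Gamma_0})\neq\chi(X)$ need not be contained in any member of $\bB$; the counting argument that would close this requires knowing that every simplex \emph{not} fixed by $\Gamma_0$ has stabilizer of large index, and membership or non-membership of its stabilizer in $\bB$ (defined by an Euler-characteristic condition) says nothing about that index. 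Replacing $\bB$ by $\sS_r(X)$ with $r$ minimal such that $|\chi(X)+ap^{r+1}|>\sum_j b_j(X;\FF_p)$ for all nonzero integers $a$ repairs both defects at once.
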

\begin{proof}
Let $r\in\ZZ_{\geq 0}$ be the smallest number such that $|\chi(X)+ap^{r+1}|>\sum_jb_j(X;\FF_p)$
for any nonzero integer $a$. We are going
to prove that $C_{p,\chi}:=p^{rC_S(X,r)}$, where $C_S(X,r)$ is defined in Lemma
\ref{lemma:isotropy-subgroups}, does the job.
Let $\Gamma$ be a $p$-group acting on $X$, and define
$$\Gamma_{\chi}:=\bigcap_{\Gamma'\in\sS_r(\Gamma)}\Gamma'.$$
By Lemma \ref{lemma:isotropy-subgroups}, $[\Gamma:\Gamma_{\chi}]\leq C_{p,\chi}$.
We now prove that if $\Gamma_0\subseteq \Gamma_{\chi}$ then $\chi(X^{\Gamma_0})=\chi(X)$.
Consider a $\Gamma$-good triangulation $(\cC,\phi)$ of $X$.
We have $|\cC|^{\Gamma_0}=|\cC^{\Gamma_0}|$, so
\begin{equation}
\label{eq:chi-comptar}
\chi(X)-\chi(X^{\Gamma_0})=\chi(\cC)-\chi(\cC^{\Gamma_0})=
\sum_{j\geq 0}(-1)^j\sharp\{\sigma\in \cC\setminus \cC^{\Gamma_0}\mid \dim\sigma=j\}.
\end{equation}
If $\sigma\in \cC\setminus \cC^{\Gamma_0}$ then the stabilizer $\Gamma_{\sigma}=\{\gamma\in\Gamma\mid\gamma\cdot\sigma=\sigma\}$ does not contain $\Gamma_0$, and this implies that $\Gamma_{\sigma}\notin\sS_r(X)$.
We thus have $[\Gamma:\Gamma_{\sigma}]\geq p^{r+1}$, because otherwise $\Gamma_{\sigma}$ would belong to $\sS_r(X)$ (indeed, $\Gamma_{\sigma}$ is the stabilizer of the baricenter
of $\sigma$). This implies that the cardinal of the orbit $\Gamma\cdot\sigma$
is divisible by $p^{r+1}$. Repeating this argument for all
$\sigma\in \cC\setminus \cC^{\Gamma_0}$ and using (\ref{eq:chi-comptar}), we conclude that $\chi(X)-\chi(X^{\Gamma_0})$ is divisible by $p^{r+1}$.
Now, we have
$|\chi(X^{\Gamma_0})|\leq \sum_j b_j(X^{\Gamma_0};\FF_p)
\leq \sum_j b_j(X;\FF_p)$
(the first inequality is obvious, and the second one follows from Corollary \ref{cor:cohom-no-augmenta} using the hypothesis that the action of $\Gamma$
on $X$ is $\FF_p$-CU).
By our choice of $r$, the congruence $\chi(X^{\Gamma_0})\equiv\chi(X)\mod p^{r+1}$
and the inequality
$|\chi(X^{\Gamma_0})|\leq \sum_j b_j(X;\FF_p)$
imply that $\chi(X^{\Gamma_0})=\chi(X)$.

We now prove the last statement. Since $X$ is implicitly assumed to be
compact, its cohomology is finitely generated,
so in particular the torsion of its integral cohomology is bounded. Hence there exists some $p_0$
such that if $p\geq p_0$ then $b_j(X;\FF_p)=b_j(X)$ for every $j$. Define $P_{\chi}:=\max\{p_0,2\sum_jb_j(X)\}$. If $p\geq P_{\chi}$ then the number $r$ defined
at the beginning of the proof is equal to $0$, and this implies that $C_{p,\chi}=1$.
\end{proof}

\subsection{$K$-stable actions: abelian $p$-groups}

Let $\Gamma$ be an abelian $p$-group acting on a manifold $X$.
Recall that for any $x\in X^{\Gamma}$ the space
$T_xX/T_x^{\Gamma}X$ carries a linear action of $\Gamma$,
induced by the derivative at $x$ of the action on $X$,
and depending up to isomorphism only on the connected component of $X^{\Gamma}$ to which
$x$ belongs (observe that $T_xX/T_x^{\Gamma}X$ is the fiber over $x$ of the normal
bundle of the inclusion of $X^{\Gamma}$ in $X$) .

Let $K\geq 1$ be a real number.
We say that the action
of $\Gamma$ on $X$ is {\bf $K$-stable} if:
\begin{enumerate}
\item the action of $\Gamma$ on $X$ is $\FF_p$-CU, and $\chi(X^{\Gamma_0})=\chi(X)$ for any subgroup $\Gamma_0\subseteq \Gamma$;
\item for any $x\in X^{\Gamma}$ and any
character $\rho\colon \Gamma\to\CC^*$ occurring in the $\Gamma$-module
$T_xX/T_xX^{\Gamma}$ we have
$$[\Gamma:\Ker\rho]\geq K.$$
\end{enumerate}

(This definition is equivalent to the one given in the introduction thanks to (3) in Lemma \ref{lemma:linearization}.)
Note that if $\Gamma$ acts trivially on $X$ then the action is $K$-stable for any $K$.
If $\Gamma\neq\{1\}$ acts effectively and the action is $K$-stable,
then we can  estimate $|\Gamma|\geq K$.

When the manifold $X$ and the action of $\Gamma$ on $X$ are clear from the context,
we will sometimes abusively say that $\Gamma$ is $K$-stable.
For example, if a group $G$ acts
on $X$ we will say that an abelian $p$-subgroup $\Gamma\subseteq G$ is $K$-stable if
the restriction of the action of $G$ to $\Gamma$ is $K$-stable.

\begin{lemma}
\label{lemma:exists-Gamma-stable-p}
There exists a number $C_{p,K}\geq 1$, depending on $X$, $p$ and $K$, with
the following property.
Let $\Gamma$ be an abelian $p$-group acting on $X$ in a $\FF_p$-CU way and
so that for any subgroup $\Gamma'\subseteq \Gamma$ we have
$\chi(X^{\Gamma'})=\chi(X)$. Then $\Gamma$
has a $K$-stable subgroup of index at most $C_{p,K}$.
Furthermore, if $p\geq K$, then $C_{p,K}=1$.
\end{lemma}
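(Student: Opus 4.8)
The plan is to find, inside $\Gamma$, a subgroup of controlled index on which condition (2) in the definition of $K$-stability is automatic, while condition (1) is inherited for free from the hypotheses. First I would observe that condition (1) is preserved under passing to subgroups: if the action of $\Gamma$ is $\FF_p$-CU then so is its restriction to any subgroup $\Gamma'\subseteq\Gamma$, and by hypothesis $\chi(X^{\Gamma''})=\chi(X)$ for \emph{every} $\Gamma''\subseteq\Gamma$, hence in particular for every subgroup of $\Gamma'$. So any subgroup of $\Gamma$ that we produce will automatically satisfy (1), and the only work is to arrange (2).

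For (2), the obstruction is the finite collection of characters $\rho\colon\Gamma\to\CC^*$ arising in the normal representations $T_xX/T_xX^{\Gamma}$ at points $x\in X^{\Gamma}$. Here I would use the key finiteness input from Lemma \ref{lemma:linearization}(3) together with Lemma \ref{lemma:isotropy-subgroups}: the isomorphism type of the normal $\Gamma$-module is locally constant on $X^{\Gamma}$, and $X^{\Gamma}$ has at most $k(X)=\sum_j b_j^\infty(X)$ connected components (by Corollary \ref{cor:cohom-no-augmenta}), so only boundedly many characters $\rho$ occur, and each such $\rho$ with $[\Gamma:\Ker\rho]<K$ has $\Ker\rho$ of index $<K$ in $\Gamma$. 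Let $N$ be the intersection of all these "bad" kernels $\Ker\rho$ together with, if one wants to be safe, the kernels of all characters of $\Gamma$ of index $<K$; since each has index $<K$ in $\Gamma$ and the number of subgroups of $\Gamma$ of index $<K$ is bounded purely in terms of $K$ (a group of order dividing $K!$ or so has boundedly many subgroups — more precisely, the number of subgroups of index $<K$ in any group is at most the number of subgroups of a fixed group of order $<K^{\,K}$, hence bounded by a function of $K$ alone), the index $[\Gamma:N]$ is bounded by a constant $C_{p,K}$ depending only on $X$, $p$, $K$. I would set this $N$ to be the desired subgroup.

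It remains to check that $N$ is $K$-stable, i.e. that condition (2) holds for $N$. Take $x\in X^{N}$ and a character $\sigma\colon N\to\CC^*$ occurring in $T_xX/T_xX^{N}$; I must show $[N:\Ker\sigma]\geq K$. If $[N:\Ker\sigma]<K$, then $\Ker\sigma$ is a subgroup of $N$ of index $<K$ and $\sigma$ is nontrivial on $N$, so the $N$-submodule $V\subseteq T_xX/T_xX^N$ on which $N$ acts through $\sigma$ is nontrivial with $\Ker\sigma\supseteq$ a subgroup of index $<K$ acting trivially. One would then descend to a character of $\Gamma$: since the connected component of $x$ in $X^N$ maps to a connected component of $X^\Gamma$ containing a point $x'$ fixed by all of $\Gamma$, and the relevant representations are compatible via restriction, the character $\sigma$ extends to (is the restriction of) a character $\rho$ of $\Gamma$ occurring in $T_{x'}X/T_{x'}X^\Gamma$, with $[\Gamma:\Ker\rho]<K\cdot[\Gamma:N]$ bounded; but $N$ was constructed to lie inside $\Ker\rho$ for all such $\rho$, contradicting the nontriviality of $\sigma$ on $N$. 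The main obstacle is making this descent precise: one must be careful that the character of $N$ appearing on the normal bundle of $X^N$ at $x$ genuinely comes from a character of $\Gamma$ appearing on the normal bundle of $X^\Gamma$ at a nearby $\Gamma$-fixed point — this uses $\FF_p$-CU to ensure $X^\Gamma\neq\emptyset$ in the right component (or alternatively one argues directly with the $\Gamma$-action on $T_xX$ using that $x$ is close to $X^\Gamma$) — and that the bookkeeping of indices stays within the claimed bound. Finally, for the last assertion: if $p\geq K$ then any nontrivial character $\rho$ of a $p$-group $\Gamma$ has $[\Gamma:\Ker\rho]\geq p\geq K$ automatically, so condition (2) holds with no passage to a subgroup, giving $C_{p,K}=1$.
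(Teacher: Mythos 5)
Your proposal has a genuine gap: the one-step construction (intersect all ``bad'' kernels of characters of $\Gamma$ occurring on the normal bundle of $X^{\Gamma}$) does not produce a $K$-stable subgroup, and you correctly identify but do not resolve the problematic step. Two things go wrong. First, once you pass to $N\subsetneq\Gamma$, the fixed-point set grows: $X^{N}\supsetneq X^{\Gamma}$, and $X^{N}$ can have connected components disjoint from $X^{\Gamma}$. At points of such components the characters of $N$ occurring in $T_xX/T_xX^{N}$ are not restrictions of characters of $\Gamma$ seen along $X^{\Gamma}$, so there is nothing to ``descend'' to and no control on their kernels. Second, even on components of $X^{N}$ that do meet $X^{\Gamma}$, a character $\rho$ of $\Gamma$ with $[\Gamma:\Ker\rho]\geq K$ restricts to a character $\sigma=\rho|_N$ with only $[N:\Ker\sigma]\geq K/[\Gamma:N]$, which need not be $\geq K$; so condition (2) can fail for $N$ even where the descent works (this weaker conclusion is exactly Lemma \ref{lemma:good-p-subgroup-of-stable}, which presupposes stability of the ambient group rather than proving it). A further minor error: your fallback of intersecting \emph{all} subgroups of index $<K$ is not of bounded index --- the number of index-$p$ subgroups of $(\ZZ/p\ZZ)^n$ is $(p^n-1)/(p-1)$, unbounded in $n$, and their intersection is trivial.

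The paper's proof repairs precisely this by iterating: if the current subgroup $\Gamma_i$ is not $K$-stable, pick \emph{one} character $\rho$ of $\Gamma_i$ (not of $\Gamma$) occurring in some $T_xX/T_xX^{\Gamma_i}$ with $[\Gamma_i:\Ker\rho]\leq K-1$ and set $\Gamma_{i+1}:=\Ker\rho$. Each step strictly enlarges the fixed-point set, $X^{\Gamma_i}\subsetneq X^{\Gamma_{i+1}}$, and since these are neat submanifolds with at most $\sum_j b_j(X;\FF_p)$ connected components (Corollary \ref{cor:cohom-no-augmenta}), Lemma \ref{lemma:chains-inclusions} bounds the number of iterations by $C_M-1$; each iteration loses index at most $K_-$, giving $C_{p,K}=K_-^{C_M-1}$. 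Your treatment of condition (1) and of the case $p\geq K$ is correct and agrees with the paper, but the core of the lemma is the termination argument for this iteration, which is missing from your proposal.
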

\begin{proof}
Let $C_M=C_M(\dim X,\sum_j b_j(X;\FF_p))$
be the number defined by Lemma \ref{lemma:chains-inclusions}.
Let $K_-$ be equal to $\max\{K-1,1\}$ if $K$ is an integer, or $[K]$ if $K$ is not an integer.
We claim that if
$p\leq \chi(X)\dim X$ then $C_{p,K}:=K_-^{C_M-1}$ has the
desired property.

Let $\Gamma_0\subseteq\Gamma$ be a subgroup.
If $\Gamma_0$ is $K$-stable, we simply define $\Gamma_{K,\stable}:=\Gamma_0$
and we are done. If $\Gamma_0$ is not $K$-stable, then
there exists some $x\in X^{\Gamma_0}$ and a
character $\rho\colon \Gamma_0\to\CC^*$ occurring in the $\Gamma_0$-module
$T_xX/T_xX^{\Gamma_0}$ such that
$[\Gamma_0:\Ker\rho]\leq K-1$. Choose one such $x$ and $\rho$
and define $\Gamma_1:=\Ker\rho\subsetneq\Gamma_0$. If $\Gamma_1$ is $K$-stable, then we define
$\Gamma_{K,\stable}:=\Gamma_1$ and we stop, otherwise
we repeat the same procedure with $\Gamma_0$ replaced by $\Gamma_1$ and define
a subgroup $\Gamma_2\subsetneq\Gamma_1$. And so on.
Each time we repeat this procedure, we
go from one group $\Gamma_i$ to a subgroup $\Gamma_{i+1}$ satisfying $X^{\Gamma_i}\subsetneq X^{\Gamma_{i+1}}$. We claim that this process must terminate after
at most $C_M-1$ steps, i.e.,
if the groups $\Gamma_0,\Gamma_1,\dots,\Gamma_{C_M-2}$ are not
$K$-stable, then $\Gamma_{C_M-1}$ is $K$-stable. For otherwise, applying yet once more
the procedure, we would define $\Gamma_{C_M}\subsetneq \Gamma_{C_M-1}$, and we would
obtain a chain of strict inclusions of submanifolds of $X$
$$X^{\Gamma_0}\subsetneq X^{\Gamma_1}\subsetneq\dots\subsetneq X^{\Gamma_{C_M}}.$$
Now, for each $j$ the manifold $X^{\Gamma_j}$ is a neat submanifold
of $X$ (by (1) in Lemma \ref{lemma:linearization}), and
the number of connected components of $X^{\Gamma_j}$ satisfies
(by Corollary \ref{cor:cohom-no-augmenta})
$$|\pi_0(X^{\Gamma_j})|=b_0(X^{\Gamma_j};\FF_p)\leq
\sum_j b_j(X^{\Gamma_j};\FF_p)\leq \sum_j b_j(X;\FF_p),$$
so our assumption leads to a contradiction with Lemma \ref{lemma:chains-inclusions}.
Since for each $i$ such that $\Gamma_{i+1}\subseteq\Gamma_i$ is defined we have
$[\Gamma_i:\Gamma_{i+1}]\leq K_-$, it follows that $\Gamma$ must have
a $K$-stable subgroup $\Gamma_{\stable}$
satisfying $[\Gamma:\Gamma_{\stable}]\leq K_-^{C_M-1}$.

Finally, if $p\geq K$ then the we can take $C_{p,K}=1$,
i.e., any subgroup $\Gamma'\subseteq \Gamma$ is $K$-stable. Indeed, property (1)
in the definition of $K$-stability
is satisfied by hypothesis, and property (2) holds because any
strict subgroup of a $p$-group has index at least $p$.
\end{proof}

If $\Gamma_0,\Gamma_1$
are subgroups of an abelian $p$-group $\Gamma$,
we denote by $\Gamma_0\Gamma_1\subseteq\Gamma$ the group consisting
of the elements $\gamma_0\gamma_1$ for each $\gamma_j\in\Gamma_j$, $j=1,2$.
We have
$$X^{\Gamma_0\Gamma_1}=X^{\Gamma_0}\cap X^{\Gamma_1}.$$

\begin{lemma}
\label{lemma:mixing-stable}
Let $\Gamma$ be an abelian $p$-group acting on $X$ in a $\FF_p$-CU way and
with the property
that for any subgroup $\Gamma'\subseteq \Gamma$ we have
$\chi(X^{\Gamma'})=\chi(X)$. Assume that
$\Gamma_0,\Gamma_1\subseteq \Gamma$ are $K$-stable subgroups. Then
$\Gamma_0\Gamma_1$ is a $K$-stable subgroup.
\end{lemma}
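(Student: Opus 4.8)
The plan is to verify the two clauses of $K$-stability for $\Gamma_0\Gamma_1$ in turn. Clause (1) will be immediate: the action of $\Gamma_0\Gamma_1\subseteq\Gamma$ is $\FF_p$-CU because the action of $\Gamma$ is, and every subgroup of $\Gamma_0\Gamma_1$ is a subgroup of $\Gamma$, so $\chi(X^{\Gamma'})=\chi(X)$ holds by the standing hypothesis on $\Gamma$. Hence the whole argument reduces to clause (2): given $x\in X^{\Gamma_0\Gamma_1}$ and a character $\rho\colon\Gamma_0\Gamma_1\to\CC^*$ occurring in the $\Gamma_0\Gamma_1$-module $T_xX/T_xX^{\Gamma_0\Gamma_1}$, I must show $[\Gamma_0\Gamma_1:\Ker\rho]\geq K$.

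First I would record some elementary structure. Since $X^{\Gamma_0\Gamma_1}=X^{\Gamma_0}\cap X^{\Gamma_1}$, the point $x$ lies in $X^{\Gamma_0}$ and in $X^{\Gamma_1}$. Because $\Gamma_0\Gamma_1$ is abelian, $(T_xX)^{\Gamma_0\Gamma_1}$ is exactly the trivial-isotypic summand of $T_xX\otimes\CC$, so any character occurring in the quotient $T_xX/T_xX^{\Gamma_0\Gamma_1}$ is nontrivial, and, being (over $\CC$) a summand of $T_xX\otimes\CC$, occurs already in $T_xX\otimes\CC$. As $\Gamma_0\Gamma_1$ is generated by $\Gamma_0$ and $\Gamma_1$, at least one of the restrictions $\rho_0:=\rho|_{\Gamma_0}$, $\rho_1:=\rho|_{\Gamma_1}$ is nontrivial; by symmetry I may assume $\rho_0\neq 1$.

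The key step will be to transfer $\rho_0$ into the normal module of $X^{\Gamma_0}$ at $x$, so as to invoke the $K$-stability of $\Gamma_0$. Pick a nonzero $v\in T_xX\otimes\CC$ on which $\Gamma_0\Gamma_1$ acts through $\rho$. By (1) in Lemma \ref{lemma:linearization} one has $T_xX^{\Gamma_0}=(T_xX)^{\Gamma_0}$, and this subspace is $\Gamma_0\Gamma_1$-invariant (since $\Gamma_1$ commutes with $\Gamma_0$), so the quotient projection $T_xX\otimes\CC\to(T_xX/T_xX^{\Gamma_0})\otimes\CC$ is $\Gamma_0$-equivariant. As $\Gamma_0$ acts on $v$ through $\rho_0\neq 1$, we have $v\notin(T_xX)^{\Gamma_0}\otimes\CC$, so the image of $v$ is nonzero and carries the $\Gamma_0$-action $\rho_0$; thus $\rho_0$ occurs in the $\Gamma_0$-module $T_xX/T_xX^{\Gamma_0}$. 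Now $K$-stability of $\Gamma_0$ together with $x\in X^{\Gamma_0}$ gives $[\Gamma_0:\Ker\rho_0]\geq K$. To finish I would note $\Ker\rho_0=\Gamma_0\cap\Ker\rho$ and that the inclusion $\Gamma_0\hookrightarrow\Gamma_0\Gamma_1$ induces an injection $\Gamma_0/(\Gamma_0\cap\Ker\rho)\hookrightarrow(\Gamma_0\Gamma_1)/\Ker\rho$, whence $[\Gamma_0\Gamma_1:\Ker\rho]\geq[\Gamma_0:\Ker\rho_0]\geq K$, as required.

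I do not expect a serious obstacle here: the argument only uses that fixed subspaces of abelian groups are trivial-isotypic summands, the identification $T_xX^{\Gamma_0}=(T_xX)^{\Gamma_0}$ from Lemma \ref{lemma:linearization}, and the trivial index inequality under restricting a character to a subgroup; in particular it needs no invariant metric, only the $\Gamma_0\Gamma_1$-invariance of $(T_xX)^{\Gamma_0}$. The one point to keep straight is the bookkeeping between ``a character occurring in a quotient module'' and ``a character occurring in the full tangent representation,'' which is why the complete-reducibility remarks are built into the plan.
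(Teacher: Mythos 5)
Your proposal is correct and follows essentially the same route as the paper's proof: clause (1) is inherited from the hypothesis on $\Gamma$, and for clause (2) one restricts the character to a $\Gamma_j$ on which it is nontrivial, observes that this restriction occurs in the normal module of $X^{\Gamma_j}$ at $x$, and concludes via $[\Gamma_0\Gamma_1:\Ker\rho]\geq[\Gamma_j:\Gamma_j\cap\Ker\rho]\geq K$. The only difference is that you spell out the complete-reducibility bookkeeping that the paper leaves implicit.
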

\begin{proof}
By hypothesis, $\chi(X^{\Gamma_0\Gamma_1})=\chi(X)$, so we only need
to check that the action of $\Gamma_0\Gamma_1$ satisfies condition (2)
in the definition of $K$-stability. Let $x\in X^{\Gamma_0\Gamma_1}$.
The characters of $\Gamma_0\Gamma_1$ occurring in the
$\Gamma_0\Gamma_1$-module $T_xX/T_xX^{\Gamma_0\Gamma_1}$ are precisely
the nontrivial characters of $\Gamma_0\Gamma_1$ occurring on $T_xX$. If
$\theta\colon \Gamma_0\Gamma_1\to\CC^*$ is one such character, then
its restriction to $\Gamma_j$, for at least one $j$, has to be
nontrivial. Hence $\theta|_{\Gamma_j}$ is one of the characters
occurring in the $\Gamma_j$-module $T_xX/T_xX^{\Gamma_j}$.
But we have
$[\Gamma_0\Gamma_1:\Ker\theta]\geq [\Gamma_j:\Ker\theta\cap\Gamma_j]
\geq K,$
the last inequality following from the assumption that $\Gamma_j$
is $K$-stable. So $\Gamma_0\Gamma_1$ is $K$-stable.
\end{proof}

\subsection{Fixed point sets and inclusions of groups}
Let $X$ be a connected manifold.
If $A,B$ are subspaces of $X$, we will write
$$A\prec B$$
whenever $A\subseteq B$ and each connected component of $A$ is a connected component of $B$.

\begin{lemma}
\label{lemma:subgroup-stable-general}
Let $\Gamma$ be an abelian $p$-group acting on $X$ in a $K$-stable way.
If a subgroup $\Gamma_0\subseteq \Gamma$ has index smaller than $K$, then $X^{\Gamma}\prec X^{\Gamma_0}$.
\end{lemma}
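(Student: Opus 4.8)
The plan is to show that each connected component of $X^\Gamma$ is open and closed in $X^{\Gamma_0}$, which (together with the obvious inclusion $X^\Gamma\subseteq X^{\Gamma_0}$) gives $X^\Gamma\prec X^{\Gamma_0}$. Since $\Gamma_0\subseteq\Gamma$, we certainly have $X^\Gamma\subseteq X^{\Gamma_0}$; and $X^\Gamma$ is closed in $X$ (hence in $X^{\Gamma_0}$) by the continuity of the action, so a component of $X^\Gamma$ is automatically closed in $X^{\Gamma_0}$. The only real content is that $X^\Gamma$ is also \emph{open} in $X^{\Gamma_0}$, i.e. $\dim_x X^\Gamma=\dim_x X^{\Gamma_0}$ for every $x\in X^\Gamma$.

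First I would fix $x\in X^\Gamma$ and apply Lemma \ref{lemma:linearization}: in a $\Gamma$-invariant neighborhood of $x$ the action of $\Gamma$ is modeled on the linear action on $T_xX$, and $\dim_xX^{\Gamma'}=\dim (T_xX)^{\Gamma'}$ for every subgroup $\Gamma'\subseteq\Gamma$. So it suffices to show $(T_xX)^{\Gamma_0}=(T_xX)^\Gamma$ as subspaces of $T_xX$. Equivalently, no nontrivial character of $\Gamma$ occurring in the $\Gamma$-module $T_xX/(T_xX)^\Gamma$ restricts to the trivial character of $\Gamma_0$. Suppose, for contradiction, that $\rho\colon\Gamma\to\CC^*$ is such a character with $\Gamma_0\subseteq\Ker\rho$. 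Then $[\Gamma:\Ker\rho]\le[\Gamma:\Gamma_0]<K$, contradicting property (2) in the definition of $K$-stability, which requires $[\Gamma:\Ker\rho]\ge K$ for every character $\rho$ occurring in $T_xX/(T_xX)^\Gamma$ at a point $x\in X^\Gamma$. This contradiction gives $(T_xX)^{\Gamma_0}=(T_xX)^\Gamma$, hence $\dim_xX^{\Gamma_0}=\dim_xX^\Gamma$, so the component of $X^\Gamma$ through $x$ is open in $X^{\Gamma_0}$.

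Combining the two facts — each component of $X^\Gamma$ is closed in $X^{\Gamma_0}$ and open in $X^{\Gamma_0}$ — we conclude that each connected component of $X^\Gamma$ is a connected component of $X^{\Gamma_0}$, i.e. $X^\Gamma\prec X^{\Gamma_0}$. There is no serious obstacle here; the one point to handle with a little care is the passage from the local linear model to the statement about $\dim_x$, but this is exactly what part (1) of Lemma \ref{lemma:linearization} provides (and the normal bundle description of $T_xX/(T_xX)^\Gamma$ recalled before the definition of $K$-stability makes the identification of the relevant characters transparent). One should also note that $\Gamma_0$ acts $\FF_p$-CU and with $\chi(X^{\Gamma'})=\chi(X)$ for all $\Gamma'\subseteq\Gamma_0$, so the hypotheses needed to even speak of the fixed loci behaving well are inherited; but for the statement as phrased only the inclusion and dimension count are required.
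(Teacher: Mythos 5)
Your proposal is correct and follows essentially the same route as the paper: reduce to showing $\dim_xX^{\Gamma}=\dim_xX^{\Gamma_0}$ at each $x\in X^{\Gamma}$, and rule out a dimension jump because it would produce a character $\rho$ of $\Gamma$ occurring in $T_xX/T_xX^{\Gamma}$ with $\Gamma_0\subseteq\Ker\rho$, whence $[\Gamma:\Ker\rho]\leq[\Gamma:\Gamma_0]<K$, contradicting condition (2) of $K$-stability. The only cosmetic difference is that the paper invokes part (3) of Lemma \ref{lemma:linearization} directly where you rederive the same fact from the local linear model of part (1).
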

\begin{proof}
We clearly have $X^{\Gamma}\subseteq X^{\Gamma_0}$, so it suffices to prove that
for each $x\in X^{\Gamma}$ we have $\dim_xX^{\Gamma}=\dim_xX^{\Gamma_0}$.
If this is not the case for some $x\in X^{\Gamma}$ then, by
(\ref{item:inclusio-propia-subvarietats-fixes}) in
Lemma \ref{lemma:linearization}, there exist an irreducible $\Gamma$-submodule
of $T_xX/T_xX^{\Gamma}$ on which the action of $\Gamma_0$ is trivial.
Let $\rho\colon \Gamma\to\CC^*$
be the character associated to this submodule. Then $\Gamma_0\subseteq \Ker\rho$, which implies
that $[\Gamma:\Ker\rho]<K$, contradicting the hypothesis that $\Gamma$ is $K$-stable.
\end{proof}

\begin{lemma}
\label{lemma:existeix-element-generic-p}
Suppose that $K>(\dim X)(\sum_jb_j(X;\FF_p))$, and
let $\Gamma$ be an abelian $p$-group acting on $X$ in a $K$-stable way.
There exists an element $\gamma\in\Gamma$ such that
$X^{\Gamma}\prec X^{\gamma}$.
\end{lemma}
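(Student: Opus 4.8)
The plan is to find a single element $\gamma$ whose fixed locus $X^\gamma$ agrees with $X^\Gamma$ on the connected components touching $X^\Gamma$, by showing that the "bad" elements—those for which some connected component of $X^\Gamma$ fails to be a connected component of $X^\gamma$—cannot exhaust $\Gamma$. The obstruction for a given $\gamma$ is local: by Lemma~\ref{lemma:linearization}(3), $X^\Gamma \not\prec X^\gamma$ precisely when there is a point $x \in X^\Gamma$ and an irreducible $\Gamma$-submodule $V \subseteq T_xX/T_xX^\Gamma$ on which $\gamma$ acts trivially, i.e. $\gamma \in \Ker\rho$ for the character $\rho$ attached to $V$. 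So the set of bad elements is contained in $\bigcup_{\rho} \Ker\rho$, where $\rho$ ranges over all characters of $\Gamma$ occurring in the normal representations $T_xX/T_xX^\Gamma$ as $x$ runs over $X^\Gamma$.

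First I would bound the number of such characters $\rho$. The key input is that the isomorphism class of the $\Gamma$-representation $T_xX/T_xX^\Gamma$ depends only on the connected component of $X^\Gamma$ containing $x$ (as recalled in the preamble to the definition of $K$-stability, using that it is the fiber of the normal bundle). By Corollary~\ref{cor:cohom-no-augmenta}, $X^\Gamma$ has at most $\sum_j b_j(X;\FF_p)$ connected components; and on each component the normal module has rank at most $\dim X$, hence contributes at most $\dim X$ characters (counted without multiplicity). So the total number $N$ of distinct characters $\rho$ arising this way satisfies $N \le (\dim X)\bigl(\sum_j b_j(X;\FF_p)\bigr) < K$.

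Next, for each such character $\rho$, the $K$-stability hypothesis (condition (2)) gives $[\Gamma : \Ker\rho] \ge K$, hence $|\Ker\rho| \le |\Gamma|/K$. Therefore
\begin{equation*}
\Bigl|\bigcup_{\rho} \Ker\rho\Bigr| \le N \cdot \frac{|\Gamma|}{K} < K \cdot \frac{|\Gamma|}{K} = |\Gamma|,
\end{equation*}
using $N < K$. Thus there exists $\gamma \in \Gamma$ lying in none of the subgroups $\Ker\rho$; for this $\gamma$, no component of $X^\Gamma$ can fail the equidimensionality condition at any point, so $X^\Gamma \prec X^\gamma$.

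**The main point to be careful about** is the counting of characters: one must make sure that "the normal representation depends only on the connected component" is used correctly so that the union over all $x \in X^\Gamma$ of the character sets is genuinely a union over the (boundedly many) components, not over all points. Once that is pinned down, the rest is an elementary pigeonhole estimate, and the strict inequality in the hypothesis $K > (\dim X)(\sum_j b_j(X;\FF_p))$ is exactly what makes $N < K$ and hence the union proper. (One should also note $\Gamma$ may be trivial, in which case $X^\Gamma = X$ and $\gamma = 1$ works trivially, so we may assume $\Gamma \neq \{1\}$, which makes the estimate $|\Gamma| \ge K > N$ meaningful.)
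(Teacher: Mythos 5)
Your proof is correct and follows essentially the same route as the paper's: bound the number of characters occurring in the normal bundle of $X^{\Gamma}$ by $(\dim X)\,|\pi_0(X^{\Gamma})| \le (\dim X)\sum_j b_j(X;\FF_p)$ via rigidity of representations and Corollary~\ref{cor:cohom-no-augmenta}, use $K$-stability to bound each $|\Ker\rho|$ by $|\Gamma|/K$, and conclude by a union bound that some $\gamma$ avoids all the kernels. Your explicit reduction of the failure of $X^{\Gamma}\prec X^{\gamma}$ to membership in some $\Ker\rho$ via Lemma~\ref{lemma:linearization}(3) is exactly what the paper leaves implicit in its final sentence.
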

\begin{proof}
Let $\nu(\Gamma)$ be the collection of subgroups of $\Gamma$ of the form
$\Ker\rho$, where $\rho\colon \Gamma\to\CC^*$ runs over the set of characters
appearing in the action of $\Gamma$ on the fibers of the normal bundle
of the inclusion of $X^{\Gamma}$ in $X$.
Since $\Gamma$ is finite, its representations are rigid, so they are locally
constant on $X^{\Gamma}$. On the other hand, for each $x\in X^{\Gamma}$ the
representation of $\Gamma$ on $T_xX/T_x^{\Gamma}$ splits as the sum of at most
$\dim X$ different irreducible representations. Consequently, $\nu(\Gamma)$
has at most $\dim X |\pi_0(X^{\Gamma})|$ elements.
By Corollary \ref{cor:cohom-no-augmenta}, $|\pi_0(X^{\Gamma})|\leq\sum_jb_j(X;\FF_p)$.
On the other hand, since $\Gamma$ is $K$-stable, we have $|\Gamma'|\leq K^{-1}|\Gamma|$
for each $\Gamma'\in\nu(\Gamma)$, so
$$\left|\bigcup_{\Gamma'\in\nu(\Gamma)}\Gamma'\right|\leq K^{-1}|\Gamma|
|\nu(\Gamma)|\leq K^{-1}|\Gamma|\dim X\left(\sum_jb_j(X;\FF_p)\right)<|\Gamma|.$$
Consequently, there exists at least one element $\gamma\in\Gamma$ not contained in $\bigcup_{\Gamma'\in\nu(\Gamma)}\Gamma'$.
By Lemma \ref{lemma:linearization} we have $X^{\Gamma}\prec X^{\gamma}$.
\end{proof}

\subsection{$K$-stable actions: arbitrary abelian groups}
Let $\Gamma$ be an abelian group acting on a 
manifold $X$.
Recall that for any prime $p$ we denote by $\Gamma_{p}\subseteq\Gamma$ the $p$-part of Gamma.
We say that the action of $\Gamma$ on $X$ is $K$-stable if and only if
for any prime $p$ the restriction of the action to
$\Gamma_{p}$ is $K$-stable (recall that any action of the trivial group is $K$-stable).
As for $p$-groups, when the manifold $X$ and the action are clear from the context,
we will sometimes say that $\Gamma$ is $K$-stable (this will be often the case
when talking about subgroups of a group acting on $X$).

\begin{theorem}
\label{thm:existence-stable-subgroups}
Let $K\geq 1$ be a real number.
There exists a constant $\Lambda(K)$, depending only on $X$,
such that any abelian group $\Gamma$ acting on $X$ in a CT-way
has a $K$-stable subgroup of index at most $\Lambda(K)$.
\end{theorem}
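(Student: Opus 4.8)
The plan is to reduce the statement to the prime-by-prime case already handled by Lemma \ref{lemma:exists-Gamma-stable-p}, dealing with the interaction between different primes via the finiteness results proved so far. Write $\Gamma=\prod_p\Gamma_p$, the decomposition into $p$-parts. Since $\Gamma$ acts on $X$ in a CT way, for every prime $p$ the action is $\FF_p$-CU by \eqref{eq:CT-implies-CT-p}, so each $\Gamma_p$ acts $\FF_p$-CU. First I would apply Lemma \ref{lemma:exists-Gamma-chi-p} to $\Gamma_p$: it produces a subgroup $(\Gamma_p)_\chi\subseteq\Gamma_p$ of index at most $C_{p,\chi}$ such that $\chi(X^{\Gamma_0})=\chi(X)$ for every $\Gamma_0\subseteq(\Gamma_p)_\chi$. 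Crucially, the last sentence of that lemma says $C_{p,\chi}=1$ for all $p\geq P_\chi$, where $P_\chi$ depends only on $X$; so only finitely many primes $p<P_\chi$ contribute a nontrivial index here, and the product $\prod_p C_{p,\chi}$ is bounded by a constant depending only on $X$.

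Next I would apply Lemma \ref{lemma:exists-Gamma-stable-p} to each $(\Gamma_p)_\chi$ (whose action is $\FF_p$-CU and satisfies $\chi(X^{\Gamma'})=\chi(X)$ for all subgroups $\Gamma'$, by the previous step), obtaining a $K$-stable subgroup $A_p\subseteq(\Gamma_p)_\chi$ of index at most $C_{p,K}$. Again, the lemma guarantees $C_{p,K}=1$ whenever $p\geq K$, so only the finitely many primes $p<K$ contribute, and $\prod_p C_{p,K}$ is bounded by a constant depending only on $X$ and $K$. Now set $A:=\prod_p A_p\subseteq\Gamma$. Since $A$ is a product over $p$ of subgroups $A_p\subseteq\Gamma_p$, its $p$-part is exactly $A_p$, which is $K$-stable by construction; hence $A$ is $K$-stable in the sense of the definition for arbitrary abelian groups. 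Finally, $[\Gamma:A]=\prod_p[\Gamma_p:A_p]\leq\prod_p\big(C_{p,\chi}\cdot C_{p,K}\big)$, which is a finite product of factors, all but finitely many equal to $1$, each bounded in terms of $X$ (and $K$). Defining $\Lambda(K):=\prod_{p<\max(P_\chi,\lceil K\rceil)}C_{p,\chi}C_{p,K}$ (a finite product depending only on $X$ and $K$) completes the proof.

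The only subtlety — and the step I would be most careful about — is making sure the two "furthermore" clauses genuinely kick in so that the product over all primes is finite: one needs $C_{p,\chi}=1$ for $p\ge P_\chi$ and $C_{p,K}=1$ for $p\ge K$, and one must check these two reductions compose correctly (the hypothesis of Lemma \ref{lemma:exists-Gamma-stable-p}, namely that $\chi(X^{\Gamma'})=\chi(X)$ for all subgroups $\Gamma'$, is exactly the conclusion of Lemma \ref{lemma:exists-Gamma-chi-p} applied first). There is no real geometric obstacle here; the content is entirely in the per-prime lemmas, and this theorem is just the bookkeeping that patches them together, using that $\Gamma$ is the internal direct product of its $p$-parts and that $K$-stability is defined prime-by-prime. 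One should also note, as remarked in the excerpt, that this statement does not use the no-odd-cohomology hypothesis — only compactness of $X$, which bounds its Betti numbers and hence makes $P_\chi$ finite.
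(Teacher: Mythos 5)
Your argument is correct and follows essentially the same route as the paper: decompose $\Gamma$ into its $p$-parts, apply Lemma \ref{lemma:exists-Gamma-chi-p} and then Lemma \ref{lemma:exists-Gamma-stable-p} to each factor, and use the two ``furthermore'' clauses to make the product of indices a finite constant. The paper's proof is exactly this (it also cites Lemma \ref{lemma:mixing-stable}, but since your construction produces a single $K$-stable subgroup per prime, that lemma is not actually needed for your version of the bookkeeping).
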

\begin{proof}
By (\ref{eq:CT-implies-CT-p}), the hypothesis that $\Gamma$ acts in a CT way implies that
it acts on a $\FF_p$-CU way for every prime $p$.
Let $P_{\chi}$ be the number defined in Lemma \ref{lemma:exists-Gamma-chi-p}.
Define
$$\Lambda(K):=\left(\prod_{p\leq P_{\chi}}C_{p,\chi}\right)\left(\prod_{p\leq K-1}C_{p,K}\right),$$
where in both products $p$ runs over the set of primes satisfying the inequality.
The theorem follows from combining Lemma \ref{lemma:exists-Gamma-chi-p},
Lemma \ref{lemma:exists-Gamma-stable-p} and Lemma \ref{lemma:mixing-stable}
applied to each of the factors
of $\Gamma\simeq\prod_{p|d}\Gamma_p$, where $d=|\Gamma|$.
\end{proof}

\section{$K$-stable actions on manifolds without odd cohomology}
\label{s:K-stable-actions-no-odd-cohomology}

In this section $X$ denotes a 
manifold without odd cohomology.
Let $p$ be any prime number.
Applying cohomology to the long exact sequence (\ref{eq:ex-sequence-z-p})
and using the fact that $X$ has no odd cohomology we obtain
\begin{equation}
\label{eq:betti-numbers-the-same}
b_j(X;\FF_p)=b_j(X)\quad\text{for any $j$}\qquad\Longrightarrow\qquad
\chi(X)=\sum_jb_j(X)=\sum_jb_j(X;\FF_p).
\end{equation}

\subsection{Results on stable actions of abelian $p$-groups}

\begin{lemma}
\label{lemma:no-odd-cohomology}
Let $p$ be any prime
number. Suppose that a $p$-group $\Gamma$ acts on $X$ in a $\FF_p$-CU way,
and that there is a subgroup $\Gamma'\subseteq \Gamma$ such that
$X^{\Gamma}\prec X^{\Gamma'}$ and $\chi(X^{\Gamma})=\chi(X^{\Gamma'})=\chi(X)$.
Then $X^{\Gamma}=X^{\Gamma'}$.
\end{lemma}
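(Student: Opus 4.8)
The plan is to reduce everything to an Euler characteristic count on the components of $X^{\Gamma'}$ that are not already components of $X^{\Gamma}$. Since $X^{\Gamma}\prec X^{\Gamma'}$, each connected component of $X^{\Gamma}$ is a connected component of $X^{\Gamma'}$, so I would write $X^{\Gamma'}=X^{\Gamma}\sqcup Z$, where $Z$ is the union of those connected components of $X^{\Gamma'}$ disjoint from $X^{\Gamma}$; both pieces are closed and open in $X^{\Gamma'}$, hence $\chi(X^{\Gamma'})=\chi(X^{\Gamma})+\chi(Z)$, and the hypothesis $\chi(X^{\Gamma})=\chi(X^{\Gamma'})$ gives $\chi(Z)=0$. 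It then suffices to prove that every connected component $W$ of $X^{\Gamma'}$ satisfies $\chi(W)\geq 1$: this forces $Z$ to have no components, i.e.\ $Z=\emptyset$ and $X^{\Gamma}=X^{\Gamma'}$.

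To obtain the bound $\chi(W)\geq 1$ I would show that $X^{\Gamma'}$ has no odd $\FF_p$-cohomology, the key input being Corollary \ref{cor:cohom-no-augmenta}. First note that $\Gamma'$ acts on $X$ in an $\FF_p$-CU way, since being $\FF_p$-CU is a condition imposed on each group element separately and is therefore inherited by subgroups; thus Corollary \ref{cor:cohom-no-augmenta} applies and gives $\sum_j b_j(X^{\Gamma'};\FF_p)\leq\sum_j b_j(X;\FF_p)$. As $X$ has no odd cohomology, \eqref{eq:betti-numbers-the-same} yields $\sum_j b_j(X;\FF_p)=\chi(X)$, which is at least $b_0(X)\geq 1$. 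Combining this with the hypothesis $\chi(X^{\Gamma'})=\chi(X)$ and the elementary inequality $|\chi(X^{\Gamma'})|\leq\sum_j b_j(X^{\Gamma'};\FF_p)$ I get
$$\chi(X^{\Gamma'})=\chi(X)=\sum_j b_j(X;\FF_p)\geq\sum_j b_j(X^{\Gamma'};\FF_p)\geq|\chi(X^{\Gamma'})|=\chi(X^{\Gamma'}),$$
so all of these inequalities are in fact equalities. In particular $\sum_j b_j(X^{\Gamma'};\FF_p)=\sum_j(-1)^j b_j(X^{\Gamma'};\FF_p)$, and since the Betti numbers are nonnegative this forces $b_j(X^{\Gamma'};\FF_p)=0$ for every odd $j$.

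To finish, for each connected component $W$ of $X^{\Gamma'}$ the cohomology $H^*(W;\FF_p)$ is a direct summand of $H^*(X^{\Gamma'};\FF_p)$ (a compact manifold has finitely many components and is their disjoint union), hence is also concentrated in even degrees; therefore $\chi(W)=\sum_j(-1)^j b_j(W;\FF_p)=\sum_j b_j(W;\FF_p)\geq b_0(W;\FF_p)=1$. Feeding this into $\chi(Z)=0$ from the first paragraph gives $Z=\emptyset$, that is, $X^{\Gamma}=X^{\Gamma'}$. I do not expect a genuine obstacle here; the two places that deserve a line of justification are the passage of the $\FF_p$-CU hypothesis from $\Gamma$ to $\Gamma'$ (needed so that Corollary \ref{cor:cohom-no-augmenta} can be applied to $X^{\Gamma'}$) and the additivity of $\chi$ along $X^{\Gamma'}=X^{\Gamma}\sqcup Z$, which is immediate because each of the two parts is a union of connected components of the compact manifold $X^{\Gamma'}$.
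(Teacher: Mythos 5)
Your proof is correct and follows essentially the same route as the paper's: both hinge on applying Corollary \ref{cor:cohom-no-augmenta} together with \eqref{eq:betti-numbers-the-same} to force the chain $|\chi(X^{\Gamma'})|\leq\sum_j b_j(X^{\Gamma'};\FF_p)\leq\sum_j b_j(X;\FF_p)=\chi(X)$ into equalities. The only real difference is in the bookkeeping at the end: the paper concludes directly from $\sum_j b_j(X^{\Gamma};\FF_p)=\sum_j b_j(X^{\Gamma'};\FF_p)$ and $X^{\Gamma}\prec X^{\Gamma'}$ (any extra component would add at least $1$ to $b_0$), whereas you take the slightly longer but equally valid path of first showing the odd $\FF_p$-Betti numbers of $X^{\Gamma'}$ vanish and then arguing via positivity of the Euler characteristic of each component.
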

\begin{proof}
If $\Gamma$ is a $p$-group acting on $X$ in a $\FF_p$-CU way then
by Corollary \ref{cor:cohom-no-augmenta} we have $\sum_jb_j(X^{\Gamma};\FF_p)\leq\sum_jb_j(X;\FF_p)$
so, using (\ref{eq:betti-numbers-the-same}),
$$|\chi(X^{\Gamma})|\leq \sum_jb_j(X^{\Gamma};\FF_p)\leq\sum_jb_j(X;\FF_p)=\chi(X).$$
If in addition $\chi(X^{\Gamma})=\chi(X)$, then all inequalities are equalities, so in particular we have
$\sum_jb_j(X^{\Gamma};\FF_p)=\sum_jb_j(X;\FF_p)$. Now, given an inclusion of groups
$\Gamma'\subseteq \Gamma$ satisfying the hypothesis of the lemma, the previous arguments give
$\sum_jb_j(X^{\Gamma};\FF_p)=\sum_jb_j(X^{\Gamma'};\FF_p)$. If this is combined
with $X^{\Gamma}\prec X^{\Gamma'}$ then we deduce $X^{\Gamma}=X^{\Gamma'}$.
\end{proof}

\begin{lemma}
\label{lemma:good-p-subgroup-of-stable}
Let $\Gamma$ be an abelian $p$-group, acting on $X$ in a $K$-stable way. Suppose
that $\Gamma_0\subseteq \Gamma$ is a subgroup satisfying $d:=[\Gamma:\Gamma_0]<K$.
Then the action of $\Gamma_0$ on $X$ is $K/d$-stable and $X^{\Gamma_0}=X^{\Gamma}$.
\end{lemma}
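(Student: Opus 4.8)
The statement has two conclusions: that $\Gamma_0$ acts in a $K/d$-stable way, and that $X^{\Gamma_0}=X^\Gamma$. I would prove the second conclusion first, since it is the one that exploits the ``no odd cohomology'' hypothesis and it feeds into the first. Because $\Gamma_0\subseteq\Gamma$ has index $d<K$, Lemma \ref{lemma:subgroup-stable-general} gives $X^\Gamma\prec X^{\Gamma_0}$. Now I want to upgrade this to an equality using Lemma \ref{lemma:no-odd-cohomology}, so I need $\chi(X^\Gamma)=\chi(X^{\Gamma_0})=\chi(X)$. But this is exactly property (1) in the definition of $K$-stability for $\Gamma$: since $\Gamma$ is $K$-stable, $\chi(X^{\Gamma'})=\chi(X)$ for \emph{every} subgroup $\Gamma'\subseteq\Gamma$, in particular for $\Gamma'=\Gamma$ and $\Gamma'=\Gamma_0$. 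Also $\Gamma$ acts $\FF_p$-CU on $X$ (again part of (1)), so the hypotheses of Lemma \ref{lemma:no-odd-cohomology} are met, and I conclude $X^{\Gamma_0}=X^\Gamma$.

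With the equality of fixed loci in hand, I turn to $K/d$-stability of $\Gamma_0$. Property (1): $\Gamma_0$ acts $\FF_p$-CU because the restriction of an $\FF_p$-CU action to a subgroup is $\FF_p$-CU, and $\chi(X^{\Gamma_0'})=\chi(X)$ for any $\Gamma_0'\subseteq\Gamma_0$ since $\Gamma_0'$ is also a subgroup of $\Gamma$ and $\Gamma$ is $K$-stable. Property (2): fix $x\in X^{\Gamma_0}$; since $X^{\Gamma_0}=X^\Gamma$, we have $x\in X^\Gamma$, and moreover the normal bundle identification gives $T_xX/T_xX^{\Gamma_0}=T_xX/T_xX^\Gamma$ as vector spaces. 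Let $\rho\colon\Gamma_0\to\CC^*$ be a character occurring in $T_xX/T_xX^{\Gamma_0}$. The idea is that $\rho$ must be the restriction to $\Gamma_0$ of some character $\tilde\rho\colon\Gamma\to\CC^*$ occurring in $T_xX/T_xX^\Gamma$: indeed, $T_xX/T_xX^\Gamma$ decomposes under the $\Gamma$-action into isotypic pieces, each of which restricts to a sum of copies of a single $\Gamma_0$-character, and no trivial $\Gamma_0$-character can occur (otherwise there would be a vector in $T_xX/T_xX^\Gamma$ fixed by $\Gamma_0$, forcing $\dim_x X^{\Gamma_0}>\dim_x X^\Gamma$, contradicting the equality). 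So $\rho=\tilde\rho|_{\Gamma_0}$ for such a $\tilde\rho$. Then $\Ker\rho=\Ker\tilde\rho\cap\Gamma_0$, hence
$$[\Gamma_0:\Ker\rho]=[\Gamma_0:\Ker\tilde\rho\cap\Gamma_0]\geq\frac{[\Gamma:\Ker\tilde\rho]}{[\Gamma:\Gamma_0]}\geq\frac{K}{d},$$
where the last inequality uses $K$-stability of $\Gamma$. This is property (2) for $\Gamma_0$ with constant $K/d$, so $\Gamma_0$ is $K/d$-stable.

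The main obstacle, I expect, is the bookkeeping in property (2): one must be careful that a character of $\Gamma_0$ appearing in the normal module really does come from a character of $\Gamma$ appearing in the \emph{same} normal module, and this is where the equality $X^{\Gamma_0}=X^\Gamma$ (equivalently, the absence of $\Gamma_0$-fixed normal directions) is crucial --- without it, $\rho$ could a priori be trivial or factor through a quotient in a way that breaks the index estimate. Everything else is routine: the $\FF_p$-CU and $\chi$ conditions pass to subgroups automatically, and the index inequality $[\Gamma_0:H\cap\Gamma_0]\geq[\Gamma:H]/[\Gamma:\Gamma_0]$ for $H\subseteq\Gamma$ is elementary. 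The only genuinely nontrivial input is Lemma \ref{lemma:no-odd-cohomology}, which is where the even-cohomology hypothesis on $X$ enters.
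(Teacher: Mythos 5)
Your proof is correct and follows essentially the same route as the paper's: Lemma \ref{lemma:subgroup-stable-general} gives $X^{\Gamma}\prec X^{\Gamma_0}$, the $\chi$-condition in $K$-stability plus Lemma \ref{lemma:no-odd-cohomology} upgrades this to $X^{\Gamma_0}=X^{\Gamma}$, and then every character of $\Gamma_0$ in the normal module is the restriction of a character of $\Gamma$ there, yielding the index bound $[\Gamma_0:\Ker\rho]\geq K/d$. Your extra remark justifying why no trivial $\Gamma_0$-character can occur is a detail the paper leaves implicit, but the argument is the same.
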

\begin{proof}
Property (1) in the definition of $K$-stability is independent of $K$ and is obviously
inherited by subgroups, so it suffices to prove property (2).
By Lemma \ref{lemma:subgroup-stable-general} we have $X^{\Gamma}\prec X^{\Gamma_0}$.
Since $\Gamma$ is $K$-stable, we have $\chi(X^{\Gamma})=\chi(X^{\Gamma_0})=\chi(X)$.
Hence, Lemma \ref{lemma:no-odd-cohomology} gives $X^{\Gamma_0}=X^{\Gamma}$.
It follows that any character $\rho_0\colon \Gamma_0\to\CC^*$ appearing as a summand in the
action of $\Gamma_0$ in one of the fibers of the normal bundle of the inclusion
$X^{\Gamma_0}\hookrightarrow X$ is the restriction of a character $\rho\colon \Gamma\to\CC^*$
appearing in the action of $\Gamma$ on the normal bundle of $X^{\Gamma}\hookrightarrow X$;
by the hypothesis that $\Gamma$ is $K$-stable, we have $[\Gamma:\Ker\rho]\geq K$.
Consequently:
$$[\Gamma_0:\Gamma_0\cap\Ker\rho]\geq \frac{[\Gamma:\Gamma_0\cap \Ker\rho]}{[\Gamma:\Gamma_0]}
\geq \frac{[\Gamma:\Ker\rho]}{[\Gamma:\Gamma_0]}=K/d,$$
so the lemma is proved.
\end{proof}

\begin{lemma}
\label{lemma:good-p-Gamma-has-gamma}
Let $K_\chi:=\chi(X)\dim X+1$.
If $\Gamma$ is an abelian $p$-group acting on $X$ in a $K_\chi$-stable way,
then there exists some $\gamma\in X$ such that $X^{\Gamma}=X^{\gamma}$.
\end{lemma}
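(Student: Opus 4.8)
The plan is to show that the fixed-point locus of a $K_\chi$-stable abelian $p$-group action stabilizes after passing to a single well-chosen element, where $K_\chi=\chi(X)\dim X+1$ is exactly the threshold that makes Lemma \ref{lemma:existeix-element-generic-p} available. First I would observe that the hypothesis $K_\chi=\chi(X)\dim X+1$ satisfies $K_\chi>(\dim X)(\sum_j b_j(X))$, since by (\ref{eq:betti-numbers-the-same}) we have $\sum_j b_j(X)=\sum_j b_j(X;\FF_p)=\chi(X)$ for a manifold without odd cohomology. Hence Lemma \ref{lemma:existeix-element-generic-p} applies directly and yields an element $\gamma\in\Gamma$ with $X^{\Gamma}\prec X^{\gamma}$.

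The second step is to upgrade this $\prec$-relation to an equality using Lemma \ref{lemma:no-odd-cohomology}. For that I need $\chi(X^{\Gamma})=\chi(X^{\gamma})=\chi(X)$. The equality $\chi(X^{\Gamma})=\chi(X)$ is part of property (1) in the definition of $K$-stability (it holds for every subgroup $\Gamma_0\subseteq\Gamma$, in particular for $\Gamma$ itself). For $\chi(X^{\gamma})=\chi(X)$, note that $\langle\gamma\rangle\subseteq\Gamma$, so again by property (1) of $K$-stability applied to the subgroup $\langle\gamma\rangle$ we get $\chi(X^{\gamma})=\chi(X^{\langle\gamma\rangle})=\chi(X)$. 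Now Lemma \ref{lemma:no-odd-cohomology}, applied with $\Gamma'=\langle\gamma\rangle$ (using that the action of $\Gamma$ is $\FF_p$-CU, which is part of $K$-stability), gives $X^{\Gamma}=X^{\gamma}$.

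I do not expect a serious obstacle here: the statement is essentially a packaging of Lemma \ref{lemma:existeix-element-generic-p} (which produces the generic element) together with Lemma \ref{lemma:no-odd-cohomology} (which converts $\prec$ into $=$, this being precisely where ``no odd cohomology'' enters). The only point requiring a moment's care is checking that the numerical threshold $K_\chi=\chi(X)\dim X+1$ is large enough to invoke Lemma \ref{lemma:existeix-element-generic-p}, and this is immediate from the identity $\sum_j b_j(X;\FF_p)=\chi(X)$ valid for all $p$ on manifolds without odd cohomology. One should also remark that if $\Gamma$ is trivial the statement is vacuous (take $\gamma=1$), and that $\gamma$ may of course be taken in $\Gamma$ rather than in $X$ — the statement ``$\gamma\in X$'' in the lemma is a typo for ``$\gamma\in\Gamma$''.
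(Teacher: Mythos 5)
Your proof is correct and follows exactly the same route as the paper, which simply cites Lemma \ref{lemma:existeix-element-generic-p}, equality (\ref{eq:betti-numbers-the-same}), and Lemma \ref{lemma:no-odd-cohomology}; your write-up just makes the numerical check $K_\chi>(\dim X)\sum_j b_j(X;\FF_p)$ and the application of Lemma \ref{lemma:no-odd-cohomology} with $\Gamma'=\langle\gamma\rangle$ explicit. You are also right that ``$\gamma\in X$'' in the statement is a typo for ``$\gamma\in\Gamma$''.
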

\begin{proof}
This follows from combining Lemma \ref{lemma:existeix-element-generic-p},
 equality (\ref{eq:betti-numbers-the-same}), and Lemma \ref{lemma:no-odd-cohomology}.
\end{proof}

\subsection{Results on stable actions of arbitrary abelian groups}

\begin{theorem}
\label{thm:good-Gamma-has-gamma}
Define $K_{\chi}$ as in Lemma \ref{lemma:good-p-Gamma-has-gamma}.
If an action of an abelian group $\Gamma$ on $X$ is $K_\chi$-stable, then
for any connected component $Y\subset X$ we have $\chi(Y^{\Gamma})=\chi(Y)$.
\end{theorem}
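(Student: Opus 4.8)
The plan is to reduce the statement about an arbitrary abelian group $\Gamma$ to the $p$-group case handled in Lemma \ref{lemma:good-p-Gamma-has-gamma}, and then to convert the conclusion ``$X^{\Gamma_p}=X^{\gamma_p}$ for a single element'' into a statement about Euler characteristics. First I would observe that, since $\Gamma$ is finite abelian, $\Gamma=\prod_{p\mid|\Gamma|}\Gamma_p$, and by definition of $K_\chi$-stability each $\Gamma_p$ acts on $X$ in a $K_\chi$-stable way. In particular, by (1) in the definition of $K$-stability, the action of $\Gamma$ on $X$ is $\FF_p$-CU for every $p$ and $\chi(X^{\Gamma_0})=\chi(X)$ for every subgroup $\Gamma_0$; combined with (\ref{eq:betti-numbers-the-same}) this pins down all the relevant Euler characteristics on the nose.

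Next I would work one connected component $Y\subset X$ at a time, but here a small subtlety arises: $Y^{\Gamma_p}$ need not be connected, so I cannot literally invoke Lemma \ref{lemma:good-p-Gamma-has-gamma} componentwise. Instead I would fix an enumeration $p_1,\dots,p_k$ of the primes dividing $|\Gamma|$ and build $X^{\Gamma}$ as an iterated fixed-point set $X^{\Gamma_{p_1}\cdots\Gamma_{p_k}} = \big(\cdots(X^{\Gamma_{p_1}})^{\Gamma_{p_2}}\cdots\big)^{\Gamma_{p_k}}$, using $X^{\Gamma_{p_i}\Gamma_{p_j}}=X^{\Gamma_{p_i}}\cap X^{\Gamma_{p_j}}$. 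At the $i$-th stage I would apply Lemma \ref{lemma:good-p-Gamma-has-gamma} to the $p_i$-group $\Gamma_{p_i}$ acting on the manifold $Z_{i-1}:=X^{\Gamma_{p_1}\cdots\Gamma_{p_{i-1}}}$ — which is itself a neat submanifold of $X$ without odd cohomology by Corollary \ref{cor:cohom-no-augmenta} and the fact that all Betti-number inequalities have become equalities — to find $\gamma_i\in\Gamma_{p_i}$ with $Z_i = Z_{i-1}^{\Gamma_{p_i}}=Z_{i-1}^{\gamma_i}$. One has to check that $Z_{i-1}$ really inherits the ``no odd cohomology'' property and the $\FF_{p_i}$-CU hypothesis, which follows from (2) in Theorem \ref{thm:accions-unipotents} applied recursively, plus the fact that $\FF_p$-Betti numbers cannot grow and already equal the integral ones. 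The upshot is that $X^{\Gamma}=X^{\gamma_1}\cap\dots\cap X^{\gamma_k}$, i.e. $X^\Gamma=X^{\gamma}$ where $\gamma$ is the (commuting) product $\gamma_1\cdots\gamma_k$ — more precisely $X^\Gamma = X^{\langle\gamma_1,\dots,\gamma_k\rangle}$ and since these generate a cyclic-by-prime-power picture one gets a single generator of the cyclic group $\langle\gamma_1\cdots\gamma_k\rangle$ whose fixed locus is the same, because the $\gamma_i$ have coprime orders.

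Finally, with $X^{\Gamma}=X^{\gamma}$ for a single $\gamma\in\Gamma$ and $Y\subset X$ a connected component (so $Y^{\Gamma}=Y\cap X^{\gamma}=Y^{\gamma}$), I would invoke Lemma \ref{lemma:Euler-fixed-point-set-cyclic} with the cyclic group $\langle\gamma\rangle$ acting on $Y$. To make that lemma applicable I need the action of $\langle\gamma\rangle$ on $Y$ to be $k$-CU for a field $k$ of characteristic $0$; taking $k=\QQ$, the action is CT on each component (since $\Gamma$ acts in a CT way on $X$, which is what $K_\chi$-stability together with no odd cohomology forces, via the equality of $\FF_p$- and integral Betti numbers — alternatively one argues $\QQ$-CU directly from $\FF_p$-CU for all $p$), hence $\QQ$-CU, so Lemma \ref{lemma:Euler-fixed-point-set-cyclic} gives $\chi(Y^{\gamma})\equiv\chi(Y)\bmod 0$, i.e. $\chi(Y^{\Gamma})=\chi(Y^{\gamma})=\chi(Y)$, as desired. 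The main obstacle I anticipate is the bookkeeping in the inductive reduction over primes: one must carefully verify that each intermediate fixed-point manifold $Z_{i-1}$ still satisfies both the ``no odd cohomology'' hypothesis (needed so that Lemma \ref{lemma:no-odd-cohomology}, hidden inside Lemma \ref{lemma:good-p-Gamma-has-gamma}, applies) and the $\FF_{p_i}$-CU hypothesis for the next prime, and that passing to $Z_i=Z_{i-1}^{\gamma_i}$ does not destroy $K_\chi$-stability of the remaining $p$-parts — here one uses Lemma \ref{lemma:good-p-subgroup-of-stable} or simply that the remaining $\Gamma_{p_j}$ still act with the index-of-kernel bound on the normal bundle of their fixed loci inside $Z_i$, which is inherited from the ambient bound since $Z_i$ is a union of components of $X^{\gamma_i}$.
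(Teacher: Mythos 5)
Your overall strategy --- reduce to the $p$-parts $\Gamma_{p_i}$, produce a single element $\gamma_i\in\Gamma_{p_i}$ with the same fixed-point set, combine the $\gamma_i$ into one element using coprimality of their orders, and finish by applying Lemma \ref{lemma:Euler-fixed-point-set-cyclic} over a characteristic-zero field to each connected component --- is exactly the paper's, and your first and last steps are fine. The problem is the middle. The ``subtlety'' you raise is not actually there: Lemma \ref{lemma:good-p-Gamma-has-gamma} is stated for the (possibly disconnected) manifold $X$ fixed at the start of Section \ref{s:K-stable-actions-no-odd-cohomology}, so you may apply it directly to each $\Gamma_{p_i}$ acting on all of $X$, obtaining $\gamma_i$ with $X^{\gamma_i}=X^{\Gamma_{p_i}}$; then $X^{\Gamma}=\bigcap_iX^{\Gamma_{p_i}}=\bigcap_iX^{\gamma_i}=X^{\gamma_1\cdots\gamma_k}$ by the Chinese remainder theorem, with no geometry of intermediate fixed-point sets involved. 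That is all the paper does.

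The iterated construction you substitute for this is not only unnecessary, it has genuine gaps. (a) You claim $Z_{i-1}=X^{\Gamma_{p_1}\cdots\Gamma_{p_{i-1}}}$ has no odd cohomology ``by Corollary \ref{cor:cohom-no-augmenta} and the fact that all Betti-number inequalities have become equalities''. That equality argument only shows $H^*(Z_1;\FF_{p_1})$ is concentrated in even degrees; it gives no control on $q$-torsion in $H^*(Z_1;\ZZ)$ for primes $q\neq p_1$, so the ``no odd cohomology'' hypothesis needed for Lemma \ref{lemma:no-odd-cohomology} (hence for Lemma \ref{lemma:good-p-Gamma-has-gamma}) on $Z_{i-1}$ is unverified. (b) Theorem \ref{thm:accions-unipotents}(2) concerns a $p$-group and $\FF_p$-coefficients for the \emph{same} prime $p$; it does not yield that $\Gamma_{p_2}$ acts unipotently on $H^*(Z_1;\FF_{p_2})$, which is what the next stage requires. (c) Condition (1) of $K_\chi$-stability for $\Gamma_{p_i}$ acting on $Z_{i-1}$ demands $\chi(Z_{i-1}^{Q})=\chi(Z_{i-1})$ for all $Q\subseteq\Gamma_{p_i}$, i.e.\ $\chi(X^{\Gamma'})=\chi(X)$ for subgroups $\Gamma'$ meeting several $p$-parts of $\Gamma$ --- which is essentially an instance of the statement you are proving, whereas the definition of $K$-stability only guarantees this for subgroups of a single $p$-part. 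All three difficulties disappear if you drop the iteration and combine the $\gamma_i$ group-theoretically as above.
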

\begin{proof}
There is an isomorphism $\Gamma\simeq \Gamma_{p_1}\times\dots\times \Gamma_{p_k}$,
where $p_1,\dots,p_k$ are the prime divisors of $|\Gamma|$.
Since the action of $\Gamma$ is $K_\chi$-stable so is, by definition,
its restriction to each $\Gamma_{p_i}$, so by
Lemma \ref{lemma:good-p-Gamma-has-gamma} there exists, for each $i$, an element
$\gamma_i\in\Gamma_{p_i}$ such that $X^{\gamma_i}=X^{\Gamma_{p_i}}$.
Let $\gamma=\gamma_1\dots\gamma_k$. Then
$X^{\Gamma}=\bigcap_i X^{\Gamma_i}\subseteq X^{\gamma}.$
By the Chinese remainder theorem and the fact that the elements $\gamma_1,\dots,\gamma_k$
commute,
for each $i$ there exists some $e$ such that $\gamma^e=\gamma_i$.
Hence
$X^{\gamma}\subseteq X^{\gamma^e}=X^{\gamma_i}=X^{\Gamma_{p_i}}.$
Taking the intersection over all $i$ we get
$X^{\gamma}\subset \bigcap_i X^{\Gamma_i}=X^{\Gamma}.$
Combining the two inclusions we have $X^{\gamma}=X^{\Gamma}$.
By definition, the assumption that the action of $\Gamma$ on $X$
is $K_{\chi}$-stable implies that it is CT, so in particular $\Gamma$
preserves the connected components of $X$, and its restriction to each
connected component is CT. So the lemma
follows from applying Lemma \ref{lemma:Euler-fixed-point-set-cyclic}
to each connected component of $X$.
\end{proof}

\begin{lemma}
\label{lemma:good-subgroup-of-stable}
Let $\Gamma$ be an abelian group, acting on $X$ in a $K$-stable way. Suppose
that $\Gamma_0\subseteq \Gamma$ is a subgroup satisfying $d:=[\Gamma:\Gamma_0]<K$.
Then the action of $\Gamma_0$ on $X$ is $K/d$-stable and $X^{\Gamma_0}=X^{\Gamma}$.
\end{lemma}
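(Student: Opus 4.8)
The plan is to reduce the statement for an arbitrary abelian group $\Gamma$ to the $p$-group case already handled in Lemma~\ref{lemma:good-p-subgroup-of-stable}. First I would write $\Gamma\simeq\Gamma_{p_1}\times\dots\times\Gamma_{p_k}$, where $p_1,\dots,p_k$ are the prime divisors of $|\Gamma|$, and correspondingly $\Gamma_0\simeq (\Gamma_0)_{p_1}\times\dots\times(\Gamma_0)_{p_k}$ with $(\Gamma_0)_{p_i}=\Gamma_0\cap\Gamma_{p_i}\subseteq\Gamma_{p_i}$. Set $d_i:=[\Gamma_{p_i}:(\Gamma_0)_{p_i}]$; then $d=[\Gamma:\Gamma_0]=\prod_i d_i$, so in particular each $d_i\le d<K$. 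Since the action of $\Gamma$ is $K$-stable, by definition the restriction to each $\Gamma_{p_i}$ is $K$-stable, so Lemma~\ref{lemma:good-p-subgroup-of-stable} applies to the pair $(\Gamma_0)_{p_i}\subseteq\Gamma_{p_i}$ and yields that the action of $(\Gamma_0)_{p_i}$ is $K/d_i$-stable and that $X^{(\Gamma_0)_{p_i}}=X^{\Gamma_{p_i}}$.

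Next I would assemble these two conclusions. For the fixed-point equality, using that $X^H=\bigcap_i X^{H_{p_i}}$ for any subgroup $H$ of an abelian group with $p_i$-parts $H_{p_i}$ (this is the identity $X^{\Gamma_0\Gamma_1}=X^{\Gamma_0}\cap X^{\Gamma_1}$ noted before Lemma~\ref{lemma:mixing-stable}, applied iteratively), we get
$$X^{\Gamma_0}=\bigcap_i X^{(\Gamma_0)_{p_i}}=\bigcap_i X^{\Gamma_{p_i}}=X^{\Gamma},$$
which is the second assertion. For the $K/d$-stability of the $\Gamma_0$-action, by definition I must check that for each prime $p_i$ the restriction of the action to $(\Gamma_0)_{p_i}$ is $K/d$-stable. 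Since $(\Gamma_0)_{p_i}$ is $K/d_i$-stable and $d_i\le d$, hence $K/d_i\ge K/d$, and since $K'$-stability for a fixed action trivially implies $K''$-stability whenever $K''\le K'$ (property (1) in the definition is independent of the constant, and property (2) only gets weaker as the constant decreases), it follows that $(\Gamma_0)_{p_i}$ is $K/d$-stable. As this holds for every $i$, the action of $\Gamma_0$ on $X$ is $K/d$-stable, completing the proof.

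There is no serious obstacle here; the statement is essentially the prime-by-prime packaging of Lemma~\ref{lemma:good-p-subgroup-of-stable}, and the only two points that deserve a sentence of care are the bookkeeping $d=\prod_i d_i$ with each $d_i<K$ (which is needed so that Lemma~\ref{lemma:good-p-subgroup-of-stable} is applicable to every factor), and the monotonicity remark that $K'$-stability implies $K''$-stability for $K''\le K'$ (which lets us replace the sharper constants $K/d_i$ coming from the individual primes by the uniform constant $K/d$ demanded in the statement). Everything else is a direct appeal to the already-established $p$-group case and to the product decomposition of fixed-point sets for abelian groups.
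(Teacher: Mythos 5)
Your proof is correct and follows the same route as the paper, which simply says to apply Lemma \ref{lemma:good-p-subgroup-of-stable} to each factor of $\Gamma\simeq\prod_p\Gamma_p$; you have merely filled in the bookkeeping ($d=\prod_i d_i$ with each $d_i<K$, the identity $X^{\Gamma_0}=\bigcap_i X^{(\Gamma_0)_{p_i}}$, and the monotonicity of $K$-stability in $K$) that the paper leaves implicit. No issues.
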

\begin{proof}
Apply Lemma \ref{lemma:good-p-subgroup-of-stable} to each
factor in $\Gamma\simeq\prod_{p|d}\Gamma_p$, with $d=|\Gamma|$.
\end{proof}

\subsection{$p$-depth of abelian subgroups}
\label{ss:p-depth}
Let $G$ be a group acting on a manifold $X$ in a CT way.
Let $\Gamma\subseteq G$ be an abelian subgroup and let $p$ be a prime.
Define the {\bf $p$-depth} of $\Gamma$ (with respect to $X$ and the action of $G$)
as the biggest $k$ for which there exists a collection of abelian $p$-subgroups $\Gamma(0),\dots,\Gamma(k)$ of $G$, such that
$\Gamma(0)$ is equal to the $p$-part of $\Gamma$ and there are inclusions
$$X^{\Gamma(k)}\subsetneq X^{\Gamma(k-1)}\subsetneq\dots\subsetneq X^{\Gamma(1)}\subsetneq X^{\Gamma(0)}.$$
(Remark that we are not assuming inclusions among the subgroups $\Gamma(j)$, but only
among their fixed point loci.)
We denote the $p$-depth of $\Gamma$ as $\depth_p^{G,X}(\Gamma)$.

\begin{lemma}
\label{lemma:depth-bounded}
There exists a constant $C_{\depth}(X)$, depending only on $X$,
such that $$\depth_p^{G,X}(\Gamma)\leq C_{\depth}(X)$$ for any prime $p$,
any group $G$ acting in a CT way on $X$, and any abelian subgroup $\Gamma\subseteq G$.
\end{lemma}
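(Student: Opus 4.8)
A configuration witnessing $\depth_p^{G,X}(\Gamma)\ge k$ is, by definition, a chain of strict inclusions of fixed-point loci
$$X^{\Gamma(k)}\subsetneq X^{\Gamma(k-1)}\subsetneq\dots\subsetneq X^{\Gamma(0)}$$
for suitable abelian $p$-subgroups $\Gamma(0),\dots,\Gamma(k)$ of $G$. The whole content of the lemma is to bound the length of any such chain uniformly, and the plan is to deduce this directly from Lemma \ref{lemma:chains-inclusions}. To apply that lemma I need two facts about each $X^{\Gamma(j)}$: that it is a neat submanifold of $X$, and that it has at most $k(X)$ connected components for a constant $k(X)$ depending only on $X$.

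Neatness is immediate from part (1) of Lemma \ref{lemma:linearization} applied to the finite group $\Gamma(j)$ acting smoothly on $X$. For the bound on the number of connected components, recall that $G$ acts on $X$ in a CT way, so by the implication (\ref{eq:CT-implies-CT-p}) the $G$-action is $\FF_p$-CU; restricting to the $p$-subgroup $\Gamma(j)$ keeps the action $\FF_p$-CU, so Corollary \ref{cor:cohom-no-augmenta} gives $\sum_i b_i(X^{\Gamma(j)};\FF_p)\le\sum_i b_i(X;\FF_p)$. In particular
$$|\pi_0(X^{\Gamma(j)})|=b_0(X^{\Gamma(j)};\FF_p)\le\sum_i b_i(X;\FF_p)\le k(X):=\sum_i b_i^{\infty}(X),$$
with $b_i^{\infty}(X)=\max_p b_i(X;\FF_p)$ as in (\ref{eq:def-k-p-X}); this is finite since $X$ is compact and depends only on $X$.

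I would then set $C_{\depth}(X):=C_M(\dim X,k(X))$, with $C_M$ the function from Lemma \ref{lemma:chains-inclusions}. Given a chain as above: if $X^{\Gamma(0)}\subsetneq X$, relabelling produces a chain of $k+1$ neat submanifolds strictly contained in $X$, each with at most $k(X)$ components, so $k+1\le C_M(\dim X,k(X))$; if instead $X^{\Gamma(0)}=X$, one still has a chain $X^{\Gamma(k)}\subsetneq\dots\subsetneq X^{\Gamma(1)}\subsetneq X$ of $k$ such submanifolds, so $k\le C_M(\dim X,k(X))$. Either way $\depth_p^{G,X}(\Gamma)\le C_{\depth}(X)$, and this bound is independent of $p$, of $G$, and of the abelian subgroup $\Gamma$.

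There is no serious obstacle here. The only point that deserves a word of care is that in the definition of $p$-depth the subgroups $\Gamma(j)$ are not assumed to be nested, only their fixed loci are; but this is harmless, since Lemma \ref{lemma:chains-inclusions} is a statement purely about chains of submanifolds and never refers to the groups. It is also worth noting that, in contrast with several results later in this section, this argument does not use the hypothesis that $X$ has no odd cohomology.
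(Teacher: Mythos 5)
Your proof is correct and follows essentially the same route as the paper: bound $|\pi_0(X^{\Gamma(j)})|$ via Corollary \ref{cor:cohom-no-augmenta} and then invoke Lemma \ref{lemma:chains-inclusions}. The only (harmless) difference is the choice of constant --- the paper uses $\chi(X)$ as the bound on the number of components, via the no-odd-cohomology identity $\sum_j b_j(X;\FF_p)=\chi(X)$, whereas your $k(X)=\sum_j b_j^\infty(X)$ avoids that hypothesis, as you correctly observe; your extra care about neatness and the edge case $X^{\Gamma(0)}=X$ is also sound.
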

\begin{proof}
By Corollary \ref{cor:cohom-no-augmenta}, for any abelian $p$-group $\Gamma_p$ acting
in a $\FF_p$-CU way on $X$ the number of connected components of $X^{\Gamma}$ is at most $\chi(X)$.
So by Lemma \ref{lemma:chains-inclusions} the number $C_{\depth}(X):=C_M(\dim X,\chi(X))$, where
$C_M$ is defined in Lemma \ref{lemma:chains-inclusions}, is an upper bound for the $p$-depth
of any abelian group $\Gamma$ satisfying the hypothesis of the lemma.
\end{proof}

\subsection{Intersections of fixed points sets of $K$-stable abelian subgroups}
\label{ss:intersections-fixed-point-sets}
In this subsection we assume that $X$ is connected.
Let $C_{\Jord}$ be the constant in Jordan's Theorem \ref{thm:Jordan-classic}
for finite subgroups of $\GL(\dim X,\RR)$. Since $X$ has no odd cohomology, the constant
$P_{\chi}$ given by
Lemma \ref{lemma:exists-Gamma-chi-p} can be taken to be $P_{\chi}=2\chi(X)$.
Define
$$C_{\chi}=\prod_ {p\leq 2\chi(X)}C_{p,\chi},$$
where $p$ runs over the set of primes satisfying the inequality.

\begin{theorem}
\label{thm:intersection-stabilizes-any-group}
Let $G$ be a group acting on $X$ in a ECT-way.
Let $\Gamma_1,\dots,\Gamma_s$ be $K$-stable abelian
subgroups of $G$, where
$K$ is any number strictly bigger than $C_{\Jord}C_{\chi}$
and $s$ is arbitrary.
If $X^{\Gamma_1}\cap\dots\cap X^{\Gamma_s}\neq\emptyset$
then there exists a $K/(C_{\Jord}C_{\chi})$-stable abelian group
$\Gamma\subseteq G$ such that
$$X^{\Gamma_1}\cap\dots\cap X^{\Gamma_s}=X^{\Gamma}$$
and such that for any subgroup $H\subseteq G$ we have
\begin{equation}
\label{eq:interseccio-segueix-sent-gran}
[\Gamma:H\cap\Gamma]\geq \frac{\max_j[\Gamma_j:H\cap\Gamma_j]}{(C_{\Jord}C_{\chi})^{\max\{2\chi(X),C_{\Jord}\}}}.
\end{equation}
Furthermore, if there is some prime $p$ such that
each $\Gamma_j$ is a $p$-group then the group $\Gamma$
can be chosen to be a $p$-group.
\end{theorem}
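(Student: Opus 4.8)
The plan is to build $\Gamma$ greedily, one fixed-point-locus at a time, while keeping track of how much ``stability'' is lost at each step. First I would set $Y = X^{\Gamma_1}\cap\dots\cap X^{\Gamma_s}$, which is nonempty by hypothesis, and observe that $Y = X^{\Gamma_1\cdots\Gamma_s}$ if all the $\Gamma_j$ were subgroups of a common abelian group — but they need not be, so instead I work with a point $x\in Y$ and the subgroup of $G$ they generate locally. The right object is: consider the group $\tilde\Gamma = \langle \Gamma_1,\dots,\Gamma_s\rangle_x$ fixing $x$; more precisely, since each $\Gamma_j$ fixes $x$, by Lemma \ref{lemma:linearization} each $\Gamma_j$ acts linearly on $T_xX$, and the product $\Gamma_1\cdots\Gamma_s$ makes sense as a subgroup of the stabilizer $G_x$ acting on $T_xX$. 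I would not assume this product is abelian; instead I pass to a large abelian subgroup of it using Jordan (Theorem \ref{thm:Jordan-classic}), applied to $G_x\hookrightarrow \GL(T_xX)=\GL(\dim X,\RR)$: this produces an abelian subgroup $A\subseteq \Gamma_1\cdots\Gamma_s$ of index at most $C_{\Jord}$. The key observation is that $X^A$ still sits between $Y$ and $X$, and since $A$ contains a subgroup of index $\leq C_{\Jord}$ in each $\Gamma_j$, Lemma \ref{lemma:good-subgroup-of-stable} shows the relevant restrictions remain $K/C_{\Jord}$-stable.

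Next I would deal with the Euler characteristic condition (1) in the definition of $K$-stability. The abelian group $A$ produced above acts in an $\FF_p$-CU way (inherited from CT, via (\ref{eq:CT-implies-CT-p})), but it may fail $\chi(X^{A_0})=\chi(X)$ for all subgroups $A_0$. Here I invoke Lemma \ref{lemma:exists-Gamma-chi-p} prime by prime: replacing $A$ by a subgroup of index at most $C_\chi=\prod_{p\leq 2\chi(X)} C_{p,\chi}$ (using that $X$ has no odd cohomology so $P_\chi$ can be taken $2\chi(X)$), I get $A'\subseteq A$ with $\chi(X^{A'_0})=\chi(X)$ for every subgroup $A'_0$. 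Combining the two index losses, $A'$ has index at most $C_{\Jord}C_\chi$ in $\Gamma_1\cdots\Gamma_s$ and by Lemma \ref{lemma:good-subgroup-of-stable} its action is $K/(C_{\Jord}C_\chi)$-stable. Then I set $\Gamma:=A'$ (or, to get $X^{\Gamma}=Y$ exactly, I argue that $X^{A'}\supseteq X^{A}\supseteq Y$, and conversely using Lemma \ref{lemma:no-odd-cohomology}: since $\chi(X^{A'})=\chi(X^{\Gamma_j})=\chi(X)$ and the relevant fixed loci satisfy $\prec$ relations coming from $K$-stability and Lemma \ref{lemma:subgroup-stable-general}, the chain of inclusions must collapse, forcing $X^{\Gamma}=Y$).

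The inequality (\ref{eq:interseccio-segueix-sent-gran}) is then a bookkeeping statement: for any $H\subseteq G$, the index $[\Gamma:H\cap\Gamma]$ is controlled below by $[\Gamma_j:H\cap\Gamma_j]$ divided by the accumulated index losses, where the exponent $\max\{2\chi(X),C_{\Jord}\}$ arises because passing through the Jordan step and the $C_\chi$ step can each be iterated at most that many times in the worst case (the number of distinct primes below $2\chi(X)$, resp. the Jordan constant, bounds how many nested reductions occur). The final sentence about the $p$-group case is immediate: if all $\Gamma_j$ are $p$-groups then $\Gamma_1\cdots\Gamma_s$ is a $p$-group, hence already abelian-by-nothing in the relevant sense — one uses that $p$-groups are nilpotent and applies Corollary \ref{cor:cohom-no-augmenta} directly instead of Jordan, and Lemma \ref{lemma:exists-Gamma-chi-p} for the single prime $p$, so $\Gamma$ stays a $p$-group throughout.

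The main obstacle I anticipate is \textbf{making the greedy/inductive construction actually terminate with uniform bounds independent of $s$}. The subtlety is that one cannot simply take $\Gamma_1\cdots\Gamma_s$ and apply Jordan once, because the $\Gamma_j$ are subgroups of the big group $G$, not of a fixed $\GL(n,\RR)$ — the linear picture only exists after choosing a common fixed point $x\in Y$, and the identification depends on $x$. The correct approach, matching the ``Jordan's original proof'' remark in the introduction, is to use the $p$-depth bound (Lemma \ref{lemma:depth-bounded}, giving $\depth_p \leq C_{\depth}(X)$) to control how many times one can strictly enlarge a fixed-point locus: one processes the $\Gamma_j$ in turn, at each stage either the fixed locus strictly shrinks (which can happen at most $C_{\depth}(X)$ times per prime) or it does not (in which case that $\Gamma_j$ contributes nothing new and can be absorbed), and the total index loss is a product of at most $C_{\depth}(X)\cdot|\pi(\cdot)|$ many factors of $C_{\Jord}C_\chi$ — this is exactly where the exponent $\max\{2\chi(X),C_{\Jord}\}$ in (\ref{eq:interseccio-segueix-sent-gran}) comes from. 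Getting this induction set up cleanly, so that ``$s$ is arbitrary'' genuinely causes no blow-up, is the heart of the argument.
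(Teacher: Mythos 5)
Your opening moves match the paper's: take a common fixed point $x$ of all the $\Gamma_j$, embed the group $R$ they generate into $\GL(T_xX)$ via Lemma \ref{lemma:linearization}, apply Jordan's Theorem \ref{thm:Jordan-classic} to get an abelian $R^a\subseteq R$ of index at most $C_{\Jord}$, and then correct prime by prime with Lemma \ref{lemma:exists-Gamma-chi-p} to get $R^{a,\chi}$ of index at most $C_{\Jord}C_{\chi}$. (Your worry in the last paragraph is unfounded: no greedy induction over the $\Gamma_j$ and no $p$-depth argument is needed, precisely because Jordan is applied once to the whole generated group $R$ at a single common fixed point, so arbitrary $s$ costs nothing.) But there is a genuine gap at the central step. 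You conclude that your candidate $A'$ is $K/(C_{\Jord}C_{\chi})$-stable ``by Lemma \ref{lemma:good-subgroup-of-stable}'', which is a misapplication: that lemma requires the ambient group to be a $K$-stable \emph{abelian} group, and $\Gamma_1\cdots\Gamma_s$ (or $R$) is neither known to be abelian nor $K$-stable. The missing ingredient is Lemma \ref{lemma:mixing-stable}. The paper's route is: for each prime $p$ and each $j$, the group $(R^{a,\chi}\cap\Gamma_j)_p$ is a subgroup of the $K$-stable $p$-group $\Gamma_{j,p}$ of index at most $C_{\Jord}C_{\chi}$, so Lemma \ref{lemma:good-p-subgroup-of-stable} makes it $K/(C_{\Jord}C_{\chi})$-stable with $X^{(R^{a,\chi}\cap\Gamma_j)_p}=X^{\Gamma_{j,p}}$; then, because every subgroup $Q\subseteq R^{a,\chi}_p$ satisfies $\chi(X^Q)=\chi(X)$, Lemma \ref{lemma:mixing-stable} applied recursively shows that the subgroup $\Gamma(p)$ generated by these pieces is still $K/(C_{\Jord}C_{\chi})$-stable, and $\Gamma:=\prod_p\Gamma(p)$ then satisfies $X^{\Gamma}=\bigcap_j X^{\Gamma_j}$ on the nose. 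Without this mixing step you have no argument that the character condition (2) in the definition of $K$-stability holds for your $\Gamma$.

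The second problem is the exponent in (\ref{eq:interseccio-segueix-sent-gran}). Your explanation (iterated Jordan/$\chi$ reductions, or a product of $C_{\depth}(X)\cdot|\pi(\cdot)|$ factors) is not the mechanism and does not yield the stated bound. The actual argument is a prime count: $[\Gamma_j:\Gamma\cap\Gamma_j]=\prod_p[\Gamma_{j,p}:(\Gamma\cap\Gamma_j)_p]$, each factor is at most $C_{\Jord}C_{\chi}$, and the factor equals $1$ whenever $p>\max\{2\chi(X),C_{\Jord}\}$ --- for $p>C_{\Jord}$ the index $[\Gamma_{j,p}:R^a_p\cap\Gamma_{j,p}]\leq C_{\Jord}$ must be a power of $p$ and hence $1$, and for $p>2\chi(X)$ one has $C_{p,\chi}=1$ so $R^a_p=R^{a,\chi}_p$. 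Since the number of remaining primes is at most $\max\{2\chi(X),C_{\Jord}\}$, the product is at most $(C_{\Jord}C_{\chi})^{\max\{2\chi(X),C_{\Jord}\}}$, and (\ref{eq:interseccio-segueix-sent-gran}) follows from $[\Gamma:H\cap\Gamma]\geq[\Gamma\cap\Gamma_j:H\cap\Gamma\cap\Gamma_j]$. You would need to supply this computation; the heuristic you give in its place would not survive being written out.
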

\begin{proof}
Let $R\subseteq G$ be the subgroup generated by $\{\Gamma_j\}$.
Since $X$ is connected, statement (2) in Lemma \ref{lemma:linearization} implies
that for any $x\in\bigcap_jX^{\Gamma_j}$ there is an inclusion $R\hookrightarrow\GL(T_xX)$.
Applying Jordan's Theorem \ref{thm:Jordan-classic} we deduce that there
exists an abelian subgroup $R^a\subseteq R$ of index at most $C_{\Jord}$.
Let $R_p^a$ be the $p$-part of $R^a$.
By Lemma \ref{lemma:exists-Gamma-chi-p} there exists a subgroup
$R_p^{a,\chi}\subseteq R_p^a$ of index at most $C_{p,\chi}$
such that
\begin{equation}
\label{eq:prop-def-R-chi}
\text{for any subgroup $Q\subseteq R_p^{a,\chi}$ we have $\chi(X^Q)=\chi(X)$.}
\end{equation}
Let $R^{a,\chi}=\prod_p R^{a,\chi}_p$. Since $C_{p,\chi}=1$ for all
primes bigger than $2\chi(X)$, $R^{a,\chi}$ has index at most $C_{\Jord}C_{\chi}$
in $R$. Hence, $[\Gamma_j:R^{a,\chi}\cap \Gamma_j]\leq C_{\Jord}C_{\chi}$
for any $j$, and similarly for any prime $p$ we have
\begin{equation}
\label{eq:R-prop-de-Gamma}
[\Gamma_{j,p}:(R^{a,\chi}\cap \Gamma_j)_p]\leq C_{\Jord}C_{\chi}.
\end{equation}
By Lemma \ref{lemma:good-p-subgroup-of-stable}, this implies that
$(R^{a,\chi}\cap \Gamma_j)_p$ is $K/(C_{\Jord}C_{\chi})$-stable and that
$$X^{\Gamma_{j,p}}=X^{(R^{a,\chi}\cap \Gamma_j)_p}$$
(here we use the hypothesis $K/(C_{\Jord}C_{\chi})\geq 1$).
Now define $\Gamma(p)$ to be the subgroup of $R^{a,\chi}$ generated by
the groups $\{(R^{a,\chi}\cap \Gamma_j)_p\}_{1\leq j\leq s}$. By (\ref{eq:prop-def-R-chi}),
we can apply Lemma \ref{lemma:mixing-stable} recursively to the subgroup
of $R^{a,\chi}$ generated by $$(R^{a,\chi}\cap \Gamma_1)_p,\dots,(R^{a,\chi}\cap \Gamma_l)_p,$$
with $l$ moving from $2$ to $s$, and deduce that
$\Gamma(p)$ is $K/(C_{\Jord}C_{\chi})$-stable. Since each $\Gamma(p)$ is contained
in the abelian group $R^a$, the product $\Gamma:=\prod_p\Gamma(p)$ is an abelian group
whose $p$-part is $\Gamma(p)$. Hence $\Gamma$ is $K/(C_{\Jord}C_{\chi})$-stable
and we have
$$X^{\Gamma}=\bigcap_p X^{\Gamma(p)}=\bigcap_{p,j}X^{(R^{a,\chi}\cap \Gamma_j)_p}
=\bigcap_{p,j}X^{\Gamma_{j,p}}=\bigcap_j X^{\Gamma_j}.$$

In order to prove the second statement of the theorem, we compute
\begin{align*}
[\Gamma_{j,p}:(\Gamma\cap \Gamma_j)_p] &= [\Gamma_{j,p}:\Gamma(p)\cap\Gamma_{j,p}] \\
&\leq [\Gamma_{j,p}:(R^{a,\chi}\cap\Gamma_j)_p]\qquad\qquad\text{because
$(R^{a,\chi}\cap\Gamma_j)_p\subseteq\Gamma(p)$} \\
&\leq C_{\Jord}C_{\chi} \,\,\,\,\,\qquad\qquad\qquad\qquad\text{by (\ref{eq:R-prop-de-Gamma})}.
\end{align*}
If $p>\max\{2\chi(X),C_{\Jord}\}$ then the above estimate can be substantially improved
as follows. First of all, since $[R:R^a]\leq C_{\Jord}$, we have
$[\Gamma_j:R^a\cap \Gamma_j]\leq C_{\Jord}$, and taking the $p$-part (i.e.,
intersecting with $\Gamma_{j,p}$) we have
$[\Gamma_{j,p}:R^a_p\cap \Gamma_{j,p}]\leq C_{\Jord}$. Since $\Gamma_{j,p}$ is a $p$-group
and $p>C_{\Jord}$, this implies that
$[\Gamma_{j,p}:R^a_p\cap \Gamma_{j,p}]=1$, so $\Gamma_{j,p}\subseteq R^a_p$.
On the other hand, since $p>2\chi(X)$ we have $C_{p,\chi}=1$, so $R^a_p=R^{a,\chi}_p$.
It follows that in this case $[\Gamma_{j,p}:(R^{a,\chi}\cap\Gamma_j)_p]=1$, so
$[\Gamma_{j,p}:(\Gamma\cap \Gamma_j)_p]=1$.

Hence, we may estimate
\begin{align*}
[\Gamma_j:\Gamma\cap\Gamma_j] &= \prod_p [\Gamma_{j,p}:(\Gamma\cap \Gamma_j)_p]
=\prod_{p\leq\max\{2\chi(X),C_{\Jord}\}} [\Gamma_{j,p}:(\Gamma\cap \Gamma_j)_p] \\
&\leq (C_{\Jord}C_{\chi})^{\max\{2\chi(X),C_{\Jord}\}}.
\end{align*}
Now let $H\subseteq G$ be any subgroup. We have, for any $j$:
\begin{align*}
[\Gamma:H\cap\Gamma] &\geq [\Gamma\cap\Gamma_j:H\cap\Gamma\cap\Gamma_j]=
\frac{|\Gamma\cap\Gamma_j|}{|H\cap\Gamma\cap\Gamma_j|}\geq
\frac{|\Gamma\cap\Gamma_j|}{|H\cap\Gamma_j|} \\
&=\frac{1}{[\Gamma_j:\Gamma\cap\Gamma_j]}\frac{|\Gamma_j|}{|H\cap\Gamma_j|}
\geq
\frac{[\Gamma_j:H\cap\Gamma_j]}{(C_{\Jord}C_{\chi})^{\max\{2\chi(X),C_{\Jord}\}}}.
\end{align*}
This proves (\ref{eq:interseccio-segueix-sent-gran}).

Finally, if there is some prime $p$ such that each $\Gamma_j$ is a $p$-group
then $\Gamma=\Gamma(p)$, since for any primer $q\neq p$ the group $\Gamma(q)$
is trivial. Hence $\Gamma$ is a $p$-group.
\end{proof}

\section{Proof of Theorem \ref{thm:main-abelian}}
\label{s:proof-thm:main-abelian}

By Lemma \ref{lemma:Minkowski} it suffices to prove the following.

\begin{theorem}
\label{thm:main-abelian-odd-CT}
Let $X$ be a manifold without odd cohomology,
with connected components $X_1,\dots,X_r$. There exists a number $C_a(X)\in\NN$
such that any abelian group $A'$ acting in an ECT way on $X$ has a subgroup $A$ of
index at most $C_a(X)$ such that $\chi(X_i^{A})=\chi(X_i)$ for every $i$.
Furthermore $A$ can be generated by at most $\sum_i[\dim X_i/2]$ elements.
\end{theorem}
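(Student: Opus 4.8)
The strategy is to reduce to the connected case and assemble the answer component by component, using the $K$-stability machinery of Sections \ref{s:K-stable-actions} and \ref{s:K-stable-actions-no-odd-cohomology} together with Jordan's theorem and Theorem \ref{thm:abelian-generat} to control the number of generators. Let $A'$ act in an ECT way on $X$. Since $A'$ preserves each connected component $X_i$ (the action is CT), it suffices to work one component at a time and then take an intersection: if for each $i$ we produce a subgroup $A_i\subseteq A'$ of index at most some constant $c_i$ with $\chi(X_i^{A_i})=\chi(X_i)$ and $A_i$ generated by at most $[\dim X_i/2]$ elements, then $A=\bigcap_i A_i$ has index at most $\prod_i c_i$ and still satisfies $\chi(X_i^A)=\chi(X_i)$ for every $i$; but the generator bound does not survive intersection, so this naive reduction is not quite enough and needs a small extra argument (see the last paragraph).

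\textbf{Getting $\chi(X_i^A)=\chi(X_i)$.} First I would apply Theorem \ref{thm:existence-stable-subgroups} with $K=K_\chi$ (the constant from Lemma \ref{lemma:good-p-Gamma-has-gamma}, where now $\chi(X)$ and $\dim X$ should be interpreted componentwise, or one simply takes the maximum over components): there is a constant $\Lambda(K_\chi)$, depending only on $X$, such that $A'$ has a $K_\chi$-stable subgroup $A''\subseteq A'$ of index at most $\Lambda(K_\chi)$. Then Theorem \ref{thm:good-Gamma-has-gamma} applies to the action of $A''$ on $X$ and gives $\chi(X_i^{A''})=\chi(X_i)$ for every connected component $X_i\subseteq X$. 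This already achieves the Euler characteristic part with a constant depending only on $X$; the remaining work is purely about cutting down $A''$ further to control the number of generators while not destroying the equality of Euler characteristics.

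\textbf{Bounding the number of generators.} Fix a component $X_i$ and pick a point $x_i\in X_i^{A''}$ (which exists since $\chi(X_i^{A''})=\chi(X_i)\neq 0$). By Lemma \ref{lemma:linearization}(1)(2) the action of $A''$ near $x_i$ is linearized, giving an inclusion $A''\hookrightarrow\GL(T_{x_i}X_i)=\GL(\dim X_i,\RR)$; here I use that $A''$ acts effectively on $X_i$ — which need not hold a priori, so I would instead pass to the quotient of $A''$ by the (normal) subgroup acting trivially on $X_i$, or better, argue that the kernel of $A''\to\Diff(X_i)$ acts trivially on a neighborhood of $x_i$ hence, $X_i$ being connected, trivially on $X_i$. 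Applying Theorem \ref{thm:abelian-generat} to the image of $A''$ in $\GL(\dim X_i,\RR)$ produces a subgroup of index at most $C_{\dim X_i}$ that can be generated by at most $[\dim X_i/2]$ elements; pulling back gives $B_i\subseteq A''$ of index at most $C_{\dim X_i}$ with $B_i$ generated by at most $[\dim X_i/2]$ elements on $X_i$. To keep the Euler-characteristic equality one then re-applies the stabilization: take $B_i$, and by Theorem \ref{thm:existence-stable-subgroups} (or directly Lemma \ref{lemma:good-subgroup-of-stable}, since $[A'':B_i]$ is bounded and $A''$ is $K_\chi$-stable for $K_\chi$ large, so $B_i$ is $K_\chi/[A'':B_i]$-stable; increasing the initial $K$ suitably so that $K_\chi/[A'':B_i]\geq K_\chi$) we still get $\chi(X_i^{B_i})=\chi(X_i)$ via Theorem \ref{thm:good-Gamma-has-gamma}.

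\textbf{Assembling and the main obstacle.} The genuine difficulty is that intersecting the $B_i$ over $i$ restores control of $\chi$ on all components simultaneously but loses the generator bound $\sum_i[\dim X_i/2]$, since a priori the intersection of groups each generated by few elements can require many generators. To fix this I would instead fix, once and for all, a single $K_\chi$-stable subgroup $A''\subseteq A'$ of bounded index (as in the second paragraph), realize $A''$ simultaneously inside $\prod_i\GL(\dim X_i,\RR)$ via base points $x_i\in X_i^{A''}$, and apply an $r$-fold version of Theorem \ref{thm:abelian-generat}: a finite abelian subgroup of $\prod_i\GL(n_i,\RR)$ has a subgroup of bounded index isomorphic to a subgroup of $\prod_i (S^1)^{[n_i/2]}$, hence generated by at most $\sum_i[n_i/2]$ elements (this follows from the $n=\max n_i$ case applied diagonally, or by a direct induction mimicking the proof of Theorem \ref{thm:abelian-generat}). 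This yields a single $A\subseteq A''$ of index bounded by a constant depending only on $X$, generated by at most $\sum_i[\dim X_i/2]$ elements; a final application of Lemma \ref{lemma:good-subgroup-of-stable} and Theorem \ref{thm:good-Gamma-has-gamma} (after choosing the original $K$ large enough to absorb this last bounded index drop) ensures $\chi(X_i^A)=\chi(X_i)$ for every $i$, completing the proof with $C_a(X)$ the product of all the (finitely many, $X$-dependent) indices encountered.
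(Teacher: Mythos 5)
Your proposal is correct and follows essentially the same route as the paper's proof: stabilize $A'$ to a $K$-stable subgroup $A''$ with $K\geq\max\{K_{\chi},\,1+\prod_iC_{\dim X_i}\}$, use Theorem \ref{thm:good-Gamma-has-gamma} to obtain fixed points $x_i\in X_i^{A''}$, linearize there, apply Theorem \ref{thm:abelian-generat} to the image of $A''$ in each $\GL(\dim X_i,\RR)$ and intersect the preimages to get $A$, and invoke Lemma \ref{lemma:good-subgroup-of-stable} (with $K$ chosen large enough beforehand) to preserve $\chi(X_i^{A})=\chi(X_i)$. The only divergence is the final generator count, where the paper runs an explicit filtration argument on the subgroups $A\cap\Ker\pi_1\cap\dots\cap\Ker\pi_i$ while you embed $A$ into $\prod_i(S^1)^{[\dim X_i/2]}$ via the torus clause of Theorem \ref{thm:abelian-generat} and the triviality of $\bigcap_i\Ker\pi_i$; both yield the bound $\sum_i[\dim X_i/2]$.
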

\begin{proof}
Let $C_{\dim X_i}$ (resp. $K_{\chi}$) be the constant defined in Theorem \ref{thm:abelian-generat}
(resp. Theorem \ref{thm:good-Gamma-has-gamma}.)
Let $K:=\max\{1+\prod_i C_{\dim X_i},K_{\chi}\}$.
By Theorem \ref{thm:existence-stable-subgroups} there exists a $K$-stable subgroup
$A''\subseteq A'$
of index at most $\Lambda(K)$.
Since $K\geq K_{\chi}$,
Theorem \ref{thm:good-Gamma-has-gamma} implies that
$\chi(X_i^{A''})=\chi(X_i)\neq 0$ for every $i$, so there exists some point
$x_i\in X_i$ fixed by all elements in $A''$.

Since the action of $A$ on $X$ is CT, it preserves the connected components of $X$.
Let $A_i''\subset\Diff(X_i)$ be the image of $A''$ under the morphism
$A''\to\Diff(X_i)$ induced by the action (the assumption that $A'$ acts
effectively on $X$ does not necessarily imply that the restriction to
each connected components is effective).
By statement (2) in Lemma \ref{lemma:linearization},
we can identify $A_i''$ with a subgroup of
$\Aut(T_{x_i}X_i)\simeq \GL(\dim X_i,\RR)$. Since furthermore
$A_i''$ is abelian, by Theorem \ref{thm:abelian-generat} there exists a subgroup
$A_{i}\subseteq A_i''$ of index at most $C_{\dim X_i}$ which can be generated by
at most $[\dim X_i/2]$ elements.
For each $i$ let $\pi_i:A''\to A_i''$ be the natural projection.
Let $A:=\bigcap_i\pi_i^{-1}(A_{i})$. Since
$A''$ is $K$-stable and $[A'':A]\leq \prod_iC_{\dim X_i}<K$,
Lemma \ref{lemma:good-subgroup-of-stable} implies that $X^{A}=X^{A''}$, 
which gives $\chi(X_i^{A})=\chi(X_i)$ for every $i$.
On the other hand, we have $[A'':A]\leq C_a(X):=\Lambda(K) \prod_iC_{\dim X_i}$.

We now construct a generating subset $\{e_{ij}\}\subseteq A$ with no more
than $\sum[\dim X_i/2]$ elements.
We use additive notation on $A'$.
Let $B_0:=A$ and, for every $1\leq i\leq r$, $B_i:=A\cap \Ker\pi_1\cap\dots\cap\Ker\pi_{i}$.
Since the action of $A'$ on $X$ is effective, we have $\bigcap_{1\leq i\leq r}\Ker\pi_i=\{0\}$,
so $B_{r}=\{0\}$.
Taking into account that if an abelian subgroup can
be generated by at most $d$ elements then the same holds for all of its subgroups,
we may take, for every $i$, elements $e_{i1},\dots,e_{id_i}\in B_{i-1}$,
with $d_i\leq[\dim X_i/2]$,
such that $\pi_i(e_{i1}),\dots,\pi_i(e_{id_i})$ generate
$\pi_i(B_{i-1})\subset A_i$.
We now prove that $\{e_{ij}\}$ generates $A$. Let $a\in A$ be any element.
Define recursively elements $a_0,\dots,a_{r}\in A$, satisfying
$a_i\in B_{i}$ for every $i$,
as follows. Set $a_0:=a$. If $i\geq 1$
and $a_{i-1}\in B_{i-1}$ has been defined, take integers $\lambda_{ij}$ such that
$\pi_i(a_{i-1})=\sum_j\lambda_{ij}\pi_i(e_{ij})$.
Then define $a_i:=a_{i-1}-\sum_j\lambda_{ij}e_{ij}$. It follows that
$a_i\in B_i$. Repeating the procedure until $i=r$ we deduce that $a_{r}\in B_r$, so $a_r=0$.
Consequently, $a=\sum_{i,j}\lambda_{ij}e_{ij}$ and the proof is complete.
\end{proof}

\section{Proof of Theorem \ref{thm:bounded-primes}}
\label{s:proof-thm:bounded-primes}

By Lemma \ref{lemma:Minkowski}, Theorem \ref{thm:bounded-primes} can be reduced
to the following statement.

\begin{theorem}
\label{thm:bounded-primes-CT}
Let $X$ be a connected
manifold without odd cohomology and let $r\in\NN$. There exists an integer
$C(X,r)\geq 1$ such that any finite group $G$ acting in an ECT way on $X$ and satisfying
$|\pi(G)|\leq r$ contains an abelian subgroup $A\subseteq G$ of index at most $C(X,r)$
which can be generated by at most $[\dim X/2]$ elements.
\end{theorem}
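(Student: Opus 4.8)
The plan is to induct on $r=|\pi(G)|$, using Theorem~\ref{thm:main-abelian-odd-CT} (or rather its connected-component version) as the base case $r=1$ and building the inductive step around the three pillars available for $K$-stable actions: existence of $K$-stable subgroups of bounded index (Theorem~\ref{thm:existence-stable-subgroups}), the Euler-characteristic control (Theorem~\ref{thm:good-Gamma-has-gamma}), and the stability of intersections of fixed point loci (Theorem~\ref{thm:intersection-stabilizes-any-group}). Actually, the base case $r=1$ is where $p$-groups live, and there Lemma~\ref{lemma:linearization}(2) together with Jordan's Theorem~\ref{thm:Jordan-classic}, Lemma~\ref{lemma:one-big-stabiliser}, and Theorem~\ref{thm:abelian-generat} already give an abelian subgroup of bounded index generated by $[\dim X/2]$ elements; so the content is entirely in the passage from $r-1$ to $r$.

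For the inductive step, fix $G$ with $|\pi(G)|=r$ acting in an ECT way on the connected manifold $X$, and fix a large threshold $K$ (to be taken well above $C_{\Jord}C_{\chi}$ and the other constants appearing in Sections~\ref{s:K-stable-actions}--\ref{s:K-stable-actions-no-odd-cohomology}, all of which depend only on $X$, iterated a bounded number of times governed by $C_{\depth}(X)$). First I would pass to the collection of all $K$-stable abelian subgroups of $G$; by Theorem~\ref{thm:existence-stable-subgroups} every abelian subgroup of $G$ contains a $K$-stable subgroup of index at most $\Lambda(K)$, so these are plentiful. The key geometric idea, mirroring the structure of Jordan's original argument and of \cite{M2}, is to look at a $K$-stable abelian subgroup $\Gamma\subseteq G$ whose fixed point locus $X^\Gamma$ is \emph{minimal} among all such (minimality in the partial order of fixed point loci, which terminates by Lemma~\ref{lemma:depth-bounded}). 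One then examines the normalizer-type interaction between $\Gamma$ and its $G$-conjugates: for $g\in G$, the subgroup $\Gamma^g$ is again $K$-stable abelian, and if $X^\Gamma\cap X^{\Gamma^g}\neq\emptyset$, Theorem~\ref{thm:intersection-stabilizes-any-group} produces a $K/(C_{\Jord}C_{\chi})$-stable abelian subgroup $\Gamma'$ with $X^{\Gamma'}=X^\Gamma\cap X^{\Gamma^g}$; minimality of $X^\Gamma$ then forces $X^{\Gamma^g}=X^\Gamma$, and moreover (by the non-augmentation of Betti numbers and the no-odd-cohomology hypothesis, via Lemma~\ref{lemma:no-odd-cohomology}) strong rigidity of the situation. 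The subgroup $H\le G$ of elements $g$ with $X^{\Gamma^g}=X^\Gamma$ — or a suitable subgroup defined by this kind of incidence condition — contains $\Gamma$ and has the property that each element of $H$ either normalizes something close to $\Gamma$ or moves $X^\Gamma$ off itself; counting the $G$-translates of $X^\Gamma$ and using the bound on the number of connected components of fixed point loci of $p$-subgroups (Corollary~\ref{cor:cohom-no-augmenta}, bounded by $\chi(X)$), one obtains that $[G:H]$ is bounded by a constant depending only on $X$.

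The Diophantine completion of the induction is Lemma~\ref{lemma:equacio-diofantina} (the analogue of Lemma~4.1 in \cite{Bu}, as the introduction announces). Concretely: having reduced to a subgroup $H$ of bounded index whose structure is controlled by $\Gamma$, one considers the quotient $H/N$ for an appropriate normal subgroup $N$ built from $\Gamma$ (something like the pointwise stabilizer of $X^\Gamma$, or $\Gamma$ itself after a bounded-index adjustment). This quotient acts on a manifold of smaller "complexity" — here the crucial point is that either $\pi(H/N)$ has fewer than $r$ primes, so the inductive hypothesis applies directly, or the action has dropped dimension (one works on $X^\Gamma$, a neat submanifold of $X$ of strictly smaller dimension, still without odd cohomology after the relevant reductions, again using Lemma~\ref{lemma:no-odd-cohomology} and Corollary~\ref{cor:cohom-no-augmenta}) — and a bookkeeping argument balancing the bounded index of $N$ in $H$, the abelian subgroup produced by induction, and the generator count $[\dim X/2]$ (which decomposes as contributions from $\Gamma$ acting near $x\in X^\Gamma$, controlled by Theorem~\ref{thm:abelian-generat}, plus the generators coming from $X^\Gamma$) closes the loop. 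The generator bound is the delicate part of this bookkeeping: one must see that the at most $[\dim X/2]$ generators can be distributed between "normal directions to $X^\Gamma$" and "tangent directions along $X^\Gamma$" without overcounting, exactly as in the proof of Theorem~\ref{thm:main-abelian-odd-CT} above where the splitting $\sum_i[\dim X_i/2]$ was achieved by a filtration-and-lifting argument.

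The main obstacle I expect is the inductive step's "incidence/counting" core — proving that the subgroup $H$ of $G$ respecting the minimal fixed locus $X^\Gamma$ has index bounded by a constant depending only on $X$, uniformly over all $G$ with $|\pi(G)|\le r$. This is where one genuinely needs all three stability theorems working in concert, and where the arbitrariness of $s$ in Theorem~\ref{thm:intersection-stabilizes-any-group} is essential (the number of relevant conjugates $\Gamma^g$ one must intersect is not a priori bounded). A secondary difficulty is ensuring that every reduction — passing to $X^\Gamma$, passing to $H/N$, iterating — stays within the class of (connected components of) manifolds without odd cohomology and keeps the acting group's cohomology action trivial, so that the inductive hypothesis and the lemmas of Section~\ref{s:K-stable-actions-no-odd-cohomology} remain applicable; this is precisely the place where the no-odd-cohomology hypothesis is used "crucially," as the introduction warns, and it is what prevents the argument from extending to arbitrary manifolds with $\chi\ne 0$.
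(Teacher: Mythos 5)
Your toolbox is the right one (the three stability theorems, Jordan's theorem, an orbit count feeding into the arithmetic Lemma~\ref{lemma:equacio-diofantina}), but the induction scheme you propose is not the paper's and, as sketched, has a genuine gap. You induct on $r=|\pi(G)|$ and close the inductive loop by passing either to a quotient $H/N$ with fewer prime divisors or to the fixed-point set $X^\Gamma$ viewed as a smaller manifold without odd cohomology. Neither branch is available. There is no mechanism in your sketch forcing $\pi(H/N)$ to lose a prime. More seriously, $X^\Gamma$ need not be a manifold without odd cohomology: Corollary~\ref{cor:cohom-no-augmenta} and Lemma~\ref{lemma:no-odd-cohomology} only control the total $\FF_p$-Betti number of $X^\Gamma$, not the vanishing of its odd integral cohomology nor the absence of torsion, and the paper explicitly warns that one cannot avoid actions that are $\FF_p$-CU without being $\FF_p$-CT. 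This is exactly why the paper never replaces the ambient manifold: all fixed-point sets are kept inside the original $X$, and the hypotheses of Section~\ref{s:K-stable-actions-no-odd-cohomology} are always verified on $X$ itself.

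What the paper actually does is fix $r$ once and for all and induct on a different parameter: the total $p$-depth $\sum_{p\in\pi(G)}\depth_p^{G,X}(\Gamma)$ of a $K$-acceptable abelian subgroup $\Gamma$, which is uniformly bounded by Lemma~\ref{lemma:depth-bounded}. The inductive statement T($\delta$) concerns the centralizer $C_G(\Gamma)$ acting on $Y=X^\Gamma$ inside $X$; the base case $\delta=0$ already involves groups with all $r$ primes and is handled by bounding each Sylow subgroup of $Z/Z_Y$ via Lemma~\ref{lemma:B}, and the induction step builds the family $\fF$ of fixed sets of large stable subgroups, shows that the incidence relation $\approx$ is an equivalence relation using the inductive hypothesis (Lemma~\ref{lemma:intersection-transitive}), bounds the number of $R$-orbits on $\hH=\fF/\approx$ by $r$ (the only place where $r$ enters), and feeds the resulting Euler-characteristic identity into Lemma~\ref{lemma:equacio-diofantina}. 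Your ``minimal fixed locus plus conjugates'' picture gestures at the right phenomena, but without the depth induction, and with the unjustified reduction to $X^\Gamma$ as a new manifold without odd cohomology, the argument does not close.
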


The proof of this theorem will occupy this entire section.

Fix once and for all a (compact, connected, possibly with boundary)
manifold $X$ without odd cohomology and a natural number $r$.

\subsection{Review of previous results}
\label{ss:review-previous-results}
Before proving the theorem, let us summarize the notation and
results from the preceding sections which will be used in the course of the proof.
As in Subsection \ref{ss:intersections-fixed-point-sets},
we denote by $C_{\Jord}$ the constant in the statement
of Jordan's Theorem \ref{thm:Jordan-classic} for finite subgroups of $\GL(\dim X,\RR)$.
Given any $\kappa\geq 1$, Theorem \ref{thm:existence-stable-subgroups} states that there exists a constant $\Lambda(\kappa)$
depending only on $X$, such that for any abelian group $\Gamma$ acting
in a ECT way on $X$ there is a $\kappa$-stable abelian subgroup of $\Gamma$ of index at most $\Lambda(\kappa)$.
On the other hand, Theorem \ref{thm:intersection-stabilizes-any-group}
states that for any $\kappa\geq 1$ there exists a constant
$$I(\kappa):=C_{\Jord}C_{\chi}\kappa \geq 1$$
such that for any finite group $G$ acting on $X$ in an ECT way,
and any arbitrary collection $\Gamma_1,\dots,\Gamma_s$
of $I(\kappa)$-stable abelian subgroups of $G$, either $\bigcap X^{\Gamma_i}=\emptyset$
or there exists a $\kappa$-stable abelian
subgroup $\Gamma\subseteq G$ such that $\bigcap X^{\Gamma_i}=X^{\Gamma}$.
Furthermore, $\Gamma$ can be chosen in such a way that
for any subgroup $H\subseteq G$ we have
$[\Gamma:H\cap\Gamma]\geq \alpha^{-1}\max_j[\Gamma_j:H\cap\Gamma_j]$,
where
$$\alpha=(C_{\Jord}C_{\chi})^{\max\{2\chi(X),C_{\Jord}\}}.$$

For any $\kappa'\geq 1$
we say that an abelian subgroup $\Gamma\subseteq G$ is {\bf $\kappa'$-acceptable} if there is
a collection of $\kappa'$-stable subgroups of $\Gamma$, $\Gamma_1,\dots,\Gamma_s$ (where $s$ is
arbitrary), which generate $\Gamma$ and which satisfy $\bigcap_i X^{\Gamma_i}\neq\emptyset$.
(If $\Gamma$ satisfies, additionally, that $\chi(X^{\Gamma_0})=\chi(X)$ for any subgroup
$\Gamma_0\subseteq\Gamma$, then Lemma \ref{lemma:mixing-stable} implies that $\Gamma$ is
$\kappa'$-stable; but note that in general we don't include this condition in the definition
of acceptable subgroup.)
Theorem \ref{thm:intersection-stabilizes-any-group} implies that for any $I(\kappa)$-acceptable
subgroup $\Gamma\subseteq G$ there exists a $\kappa$-stable subgroup $\Gamma'\subseteq G$
such that
$X^{\Gamma}=X^{\Gamma'}$.

Let $K_{\chi}$ be the number defined in Theorem \ref{thm:good-Gamma-has-gamma}.
Recall that if an abelian subgroup $H\subseteq G$ is $K_{\chi}$-stable then $\chi(X^H)=\chi(X)$.
Define
$$K:=I(K_{\chi}).$$
By the previous observations, if $\Gamma\subseteq G$ is $K$-acceptable then $\chi(X^{\Gamma})=\chi(X)$.

\subsection{Induction scheme}
\label{ss:induction-scheme}

Let $\delta$ be a nonnegative integer. Consider the following statement
(recall that the notation $\depth_p^{G,X}$ has been
defined in Subsection \ref{ss:p-depth}):

\begin{quote}
\noindent{\bf T($\delta$).}
There exists a constant $C_T({\delta})$
with the following significance. Let $G$ be a  group
acting on $X$ in an ECT way. Suppose that $|\pi(G)|\leq r$.
Let $\Gamma\subseteq G$ be an abelian $K$-acceptable subgroup satisfying
$$\sum_{p\in\pi(G)}\depth_p^{G,X}(\Gamma)\leq\delta.$$
Let $C_G(\Gamma)$ denote the centraliser of $\Gamma$ in $G$.
Then $C_G(\Gamma)$ has an abelian subgroup $A$ of index at most $C_T({\delta})$
satisfying $X^{A}\cap X^{\Gamma}\neq\emptyset$; furthermore, $A$
can be chosen to be isomorphic to a subgroup of $\GL(\dim X,\RR)$.
\end{quote}

By Lemma \ref{lemma:depth-bounded},
statement T($rC_{\depth}(X)$), taking $\Gamma=\{1\}$, implies
Theorem \ref{thm:bounded-primes}, except for the
bound on the number of generators.
So our next aim is to prove statements $\{\text{T}(\delta)\}$ using ascending induction on
$\delta$.

\subsection{Overview of the proof}
Before entering into the details, it might be useful to give an overview of the
main arguments.
The following notation will be used once a choice of an abelian $K$-acceptable
subgroup $\Gamma\subseteq G$ has been made:
\begin{equation}
\label{eq:def-B}
Y=X^{\Gamma},\qquad
Z=C_G(\Gamma), \qquad
Z_Y=\bigcap_{y\in Y}Z_y,\qquad
\Pi\colon Z\to R:=Z/Z_Y,
\end{equation}
where $\Pi$ is the projection.
Note that, since $\Gamma$ is $K$-acceptable, we have $\chi(Y)=\chi(X)$. Furthermore, $Y$ is $Z$-invariant.

The idea to prove $\text{T}(0)$ is to bound separately the size of every $p$-Sylow subgroup
of $R$, assuming that $\depth^{G,X}_p(\Gamma)=0$. A fixed point argument
(Lemma \ref{lemma:B} below) proves that if a $p$-Sylow subgroup $R_p\subset R$ is
big enough (independently of $G$) then we can
find some $p$-group $\Gamma'\subseteq Z$ such that $X^{\Gamma'}\subsetneq X^{\Gamma_p}$,
contradicting $\depth^{G,X}_p(\Gamma)=0$. Since $|R|$ has at most $r$ different
prime divisors, we get a uniform (i.e., independent of $G$)
upper bound on $|R|=[Z:Z_Y]$. {\it A fortiori}, we have
an upper bound on $[Z:Z_y]$ for every $y\in Y$. By
Lemma \ref{lemma:linearization} there is a monomorphism
$Z_y\hookrightarrow\GL(T_yX)$, and by Jordan's Theorem \ref{thm:Jordan-classic}
$Z_y$ has an abelian subgroup of bounded index, and this completes the proof of
$\text{T}(0)$.

The induction step is more involved.
Assume that $\delta>0$ and that $\text{T}(\delta-1)$ is true.
To prove $\text{T}(\delta)$ we introduce some big constants $K_3$ and $K_5$
(depending on $X$ and $C_T(\delta-1)$) and consider the collection
$\fF$ of nonemtpy subsets of $Y$ of the form $Y^{\Theta}$, where
$\Theta$ is a $K_3$-stable abelian subgroup of $Z$ satisfying $|\Pi(\Theta)|\geq K_5$.
The choice of $K_3$ and $K_5$ guarantees that the relation $\approx$ in $\fF$,
which identifies $F,F'\in\fF$ whenever $F\cap F'\neq\emptyset$, is an equivalence relation
(here the induction hypothesis is used, see Lemma \ref{lemma:intersection-transitive})
and that, denoting by brackets the $\approx$-equivalence classes,
\begin{equation}
\label{eq:ingredient-1}
[F_1]=\dots=[F_s]\quad\Longrightarrow\quad
\chi(F_1\cap\dots\cap F_s)=\chi(X)\qquad\text{for any $F_1,\dots,F_s\in\fF$}
\end{equation}
(this is Lemma \ref{lemma:big-intersection}).
Define $\hH:=\fF/\approx$. The group $R$ acts naturally on $\hH$, and
using Lemma \ref{lemma:B} and Sylow's theorem we prove (Lemma \ref{lemma:l-at-most-r}) that
\begin{equation}
\label{eq:ingredient-2}
|\hH/R|\leq r.
\end{equation}
Next we prove (Lemma \ref{lemma:diferencia-divisible}) that
\begin{equation}
\label{eq:ingredient-3}
\chi(X)-\chi\left(\bigcup_{F\in\fF}F\right)\quad
\text{is divisible by $|R|/e$, where $e$ is uniformly bounded above.}
\end{equation}
Combining (\ref{eq:ingredient-1}), (\ref{eq:ingredient-2}) and (\ref{eq:ingredient-3})
together with an elementary arithmetic argument (Lemma \ref{lemma:equacio-diofantina})
we deduce that there exists some $h_1\in\hH$ whose stabilizer $R':=R_{h_1}\subseteq R$ satisfies
$[R:R']\leq C$ for some uniform constant $C$. Let
$$Y_1=\bigcap_{F\in\fF,\,[F]=h_1}F.$$
Then $Y_1$ is preserved by the action of $R$ on $Y$.
Applying an argument similar to the proof of $\text{T}(0)$ we deduce that the
subgroup $R''\subseteq R'$ consisting of elements which act trivially on $Y_1$
satisfies $[R':R'']\leq C'$ for some uniform constant $C'$ (Lemma \ref{lemma:bound-R''}).
The proof of $\text{T}(\delta)$ is completed following the same
ideas as in the final step of the proof of $\text{T}(0)$.

After this rough description, we proceed to give the details.
Next subsection contains two lemmas which will be used repeatedly,
Subsection \ref{ss:T(0)} contains the proof of $\text{T}(0)$,
Subsection \ref{ss:induction-step} proves the induction step,
and Subsection \ref{ss:end-of-proof}
completes the proof of Theorem \ref{thm:bounded-primes-CT}.

\subsection{Auxiliary lemmas}

\begin{lemma}
\label{lemma:A}
Let a group $G$ act on $X$. For any real number $\kappa\geq 1$
and any $x\in X$ the stabilizer $G_x\subseteq G$ has an abelian
$\kappa$-stable subgroup of index at most $C_{\Jord}\Lambda(\kappa)$.
\end{lemma}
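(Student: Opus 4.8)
The statement asks, for a group $G$ acting on $X$ (with $X$ connected, without odd cohomology, as fixed throughout Section \ref{s:proof-thm:bounded-primes}), that each point stabiliser $G_x$ contain an abelian $\kappa$-stable subgroup of index at most $C_{\Jord}\Lambda(\kappa)$. The natural route is a two-step reduction: first pass from $G_x$ to a large abelian subgroup using Jordan's theorem, then pass from that abelian subgroup to a large $\kappa$-stable subgroup using Theorem \ref{thm:existence-stable-subgroups}. The point $x$ is used only to linearise the action at a fixed point.

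First I would apply statement (2) of Lemma \ref{lemma:linearization}: since $x\in X^{G_x}$ and $X$ is connected, the derivative of the action gives an embedding $G_x\hookrightarrow\GL(T_xX)\cong\GL(\dim X,\RR)$. By Jordan's Theorem \ref{thm:Jordan-classic} (in the form fixed in Subsection \ref{ss:review-previous-results}, with constant $C_{\Jord}$ for finite subgroups of $\GL(\dim X,\RR)$), the image of $G_x$ has an abelian subgroup of index at most $C_{\Jord}$; pulling back, we obtain an abelian subgroup $\Gamma_0\subseteq G_x$ with $[G_x:\Gamma_0]\le C_{\Jord}$. Note $\Gamma_0$, as a subgroup of $G$ acting on $X$ in an ECT way, acts in a CT way on $X$, so Theorem \ref{thm:existence-stable-subgroups} applies to it. That theorem produces a $\kappa$-stable subgroup $A\subseteq\Gamma_0$ of index at most $\Lambda(\kappa)$. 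Then $A$ is an abelian $\kappa$-stable subgroup of $G_x$ with
$$[G_x:A]=[G_x:\Gamma_0]\,[\Gamma_0:A]\le C_{\Jord}\Lambda(\kappa),$$
as required.

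The only subtlety to check is that the hypotheses of Theorem \ref{thm:existence-stable-subgroups} are genuinely met, i.e.\ that the restricted action of the abelian group $\Gamma_0$ on $X$ is CT: this is immediate because $G$ acts in an ECT way (the standing assumption in this section, inherited from Theorem \ref{thm:bounded-primes-CT}), and the restriction of a CT action to a subgroup is again CT. There is no real obstacle here; the lemma is a bookkeeping combination of Lemma \ref{lemma:linearization}(2), Theorem \ref{thm:Jordan-classic}, and Theorem \ref{thm:existence-stable-subgroups}, and the index bound multiplies the two individual bounds. If one wanted to be careful about whether $G$ is assumed to act effectively or CT in the ambient context, the cleanest phrasing is to invoke only the hypothesis actually used — that the action of $G$, hence of $\Gamma_0$, is CT — which is what makes Theorem \ref{thm:existence-stable-subgroups} available.
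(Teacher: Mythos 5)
Your proof is correct and is exactly the paper's argument: the paper's own proof is the one-line citation ``This follows from Lemma \ref{lemma:linearization}, Jordan's Theorem \ref{thm:Jordan-classic}, and Theorem \ref{thm:existence-stable-subgroups},'' and your write-up simply fills in the same chain (linearise at the fixed point, apply Jordan to get an abelian subgroup of index at most $C_{\Jord}$, then extract a $\kappa$-stable subgroup of index at most $\Lambda(\kappa)$, multiplying the indices). Your remark that the CT hypothesis needed for Theorem \ref{thm:existence-stable-subgroups} is inherited from the ambient ECT assumption is the right point to check and is handled correctly.
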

\begin{proof}
This follows from Lemma \ref{lemma:linearization}, Jordan's Theorem \ref{thm:Jordan-classic},
and Theorem \ref{thm:existence-stable-subgroups}.
\end{proof}

\begin{lemma}
\label{lemma:B}
Let $p$ be any prime, and assume that a $p$-group $Z_p$ acts on $X$
in an ECT way. Let $Y\subset X$ be
a $Z_p$-invariant submanifold satisfying $\chi(Y)=\chi(X)$.
For any real $\kappa\geq 1$
there exists an abelian $\kappa$-stable subgroup $Q$ of $Z_p$
satisfying $[Z_p:Q]\leq \chi(X)C_{\Jord}\Lambda(\kappa)$ and $Y^Q\neq\emptyset$.
\end{lemma}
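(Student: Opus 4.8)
The plan is to run a fixed-point argument on $Y$ for the $p$-group $Z_p$, exploiting the hypothesis $\chi(Y)=\chi(X)\neq 0$ (nonzero because $X$ has no odd cohomology, hence every connected component contributes a positive summand to $\chi$). First I would observe that, since $Z_p$ is a $p$-group acting on $Y$ and $\chi(Y)\neq 0$, a standard counting argument shows that the action on $Y$ has a point with ``large'' stabiliser. Concretely, $Z_p$ acts on $Y$ in a CT way (being the restriction of the ECT action on $X$, and $Z_p$ preserves $Y$), so in particular the action is $\FF_p$-CU; by Lemma \ref{lemma:one-big-stabiliser} applied to $Y$ (or directly by the triangulation/orbit-counting argument underlying it), there is a point $y\in Y$ whose $Z_p$-stabiliser $(Z_p)_y$ has index at most $p^{r_0}$, where $p^{r_0}\|\chi(Y)=\chi(X)$. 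In particular $[Z_p:(Z_p)_y]\leq\chi(X)$.

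The next step is purely local. Having found $y\in Y$ with $[Z_p:(Z_p)_y]\leq\chi(X)$, I would apply Lemma \ref{lemma:A} (equivalently, Lemma \ref{lemma:linearization} to embed $(Z_p)_y\hookrightarrow\GL(T_yX)$, then Jordan's Theorem \ref{thm:Jordan-classic} and Theorem \ref{thm:existence-stable-subgroups}) to the stabiliser $(Z_p)_y$ and the given $\kappa$: this produces an abelian $\kappa$-stable subgroup $Q\subseteq(Z_p)_y$ with $[(Z_p)_y:Q]\leq C_{\Jord}\Lambda(\kappa)$. Combining the two index bounds multiplicatively gives
\[
[Z_p:Q]=[Z_p:(Z_p)_y]\,[(Z_p)_y:Q]\leq \chi(X)\,C_{\Jord}\Lambda(\kappa),
\]
which is exactly the bound claimed. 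Finally $y\in Y$ is fixed by $(Z_p)_y\supseteq Q$, so $y\in Y^{Q}$ and in particular $Y^{Q}\neq\emptyset$, completing the proof.

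I do not expect any serious obstacle here: the only mild point to be careful about is that the Euler-characteristic/orbit-counting argument of Lemma \ref{lemma:one-big-stabiliser} is stated for an ambient manifold $Y$ with $\chi(Y)\neq 0$ and a $p$-group acting \emph{smoothly}, and here $Y$ is only assumed to be a $Z_p$-invariant submanifold of $X$; one should note that a $Z_p$-invariant submanifold of a smooth $Z_p$-manifold inherits a smooth $Z_p$-action (the fixed loci appearing in applications are neat submanifolds by Lemma \ref{lemma:linearization}), so a $Z_p$-good triangulation of $Y$ exists and the counting argument applies verbatim. Everything else is a direct concatenation of results already established, so the lemma follows immediately.
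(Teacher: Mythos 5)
Your proof is correct and follows exactly the paper's own argument: apply Lemma \ref{lemma:one-big-stabiliser} to the $Z_p$-action on $Y$ to find $y\in Y$ with $[Z_p:(Z_p)_y]\leq\chi(X)$, then apply Lemma \ref{lemma:A} to $(Z_p)_y$ to extract the abelian $\kappa$-stable subgroup $Q$, and multiply the two index bounds. The extra remarks on $\chi(X)>0$ and on equivariant triangulations of the invariant submanifold $Y$ are sound and only make explicit what the paper leaves implicit.
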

\begin{proof}
Applying Lemma \ref{lemma:one-big-stabiliser} to the action of $Z_p$ on $Y$,
we deduce the existence of some $y\in Y$ such that $[Z_p:(Z_p)_y]\leq \chi(X)$.
Applying Lemma \ref{lemma:A} to the action of $(Z_p)_y$ on $X$ we deduce the
existence of an abelian $\kappa$-stable subgroup $Q$ of $(Z_p)_y$ of index
$[(Z_p)_y:Q]\leq C_{\Jord}\Lambda(\kappa)$. It follows that
$[Z_p:Q]\leq \chi(X)C_{\Jord}\Lambda(\kappa)$. Since $Q\subseteq (Z_p)_y$,
we have $y\in Y^Q$, so $Y^Q\neq\emptyset$.
\end{proof}

\subsection{Proof of T($0$)}
\label{ss:T(0)}
Let $G$ be a finite group acting on $X$ in an ECT way such that $|\pi(G)|\leq r$.
Let $\Gamma$ be a $K$-acceptable abelian subgroup of $G$ satisfying
$$\sum_{p\in\pi(G)}\depth_p^{G,X}(\Gamma)=0,$$
which is equivalent to $\depth_p^{G,X}(\Gamma)=0$ for each
$p\in\pi(G)$.
Let $Y$, $Z$, $Z_Y$, and $\Pi\colon Z\to R$ be defined as in (\ref{eq:def-B}).
We claim that the $p$-Sylow subgroups
of $R$ have at most $S:=\chi(X)C_{\Jord}\Lambda(K)$ elements.
Suppose the contrary, and let
$R_p\subseteq R$ be a $p$-Sylow subgroup satisfying $|R_p|>S$.
Let $Z_p\subseteq Z$ be a $p$-subgroup satisfying $\Pi(Z_p)=R_p$.
We have $\chi(Y)=\chi(X)$, so by Lemma \ref{lemma:B} there exists an
abelian $K$-stable subgroup $Q$ of $Z_p$ satisfying
$[Z_p:Q]\leq S$.
Since by assumption $|R_p|>S$,
$\Pi(Q)$ is a nontrivial subgroup of $R_p$, which means that $Q$
is not contained in $Z_Y$.
Hence,
$X^{\Gamma}\not\subseteq X^Q$, so
$X^{\Gamma_p}\not\subseteq X^Q$
(because $X^{\Gamma}\subseteq X^{\Gamma_p}$).
Consequently $X^{Q}\cap X^{\Gamma_p}$ is a proper subset of $X^{\Gamma_p}$. By our choice of $K$
and last statement of Theorem \ref{thm:intersection-stabilizes-any-group},
there exists some abelian $p$-group $\Delta\subseteq G$ such that
$X^{Q}\cap X^{\Gamma_p}=X^{\Delta}$. This contradicts our assumption that $\depth_p^{G,X}(\Gamma)=0$,
so the claim is proved.

Repeating this argument for all Sylow subgroups of $R$, and using the fact that
$\pi(R)\subseteq\pi(G)$, we deduce that
$|R|=[Z:Z_Y]\leq S^r$.
Let $y\in Y$.
Since $y$ is stabilised by $Z_Y$,
by (2) in Lemma \ref{lemma:linearization}
we have an inclusion
$Z_Y\hookrightarrow\GL(T_yX)$
so Jordan's Theorem gives an abelian
subgroup $A_0\subseteq Z_Y$
of index at most $C_{\Jord}$.
In particular, $A_0$ is isomorphic to a subgroup of $\GL(\dim X,\RR)$,
and $[C_G(\Gamma):A_0]=[Z:A_0]\leq S^rC_{\Jord}$. Finally, $A_0$ stabilises $y\in X^{\Gamma}$,
so $X^{A_0}\cap X^{\Gamma}\neq\emptyset$, and the proof of statement T($0$) is complete.

\subsection{Induction step}
\label{ss:induction-step}
Let $\delta\geq 1$, and assume that T($\delta-1$) is true.
Let $G$ be a finite group acting on $X$ in an ECT way and satisfying $|\pi(G)|\leq r$.
Let $\Gamma\subseteq G$ be a $K$-acceptable abelian subgroup satisfying
$\sum_{p\in\pi(G)}\depth_p^{G,X}(\Gamma)\leq\delta$.

Let $Y$, $Z$, $Z_Y$, and $\Pi\colon Z\to R$ be defined as in (\ref{eq:def-B}).

Given two subgroups $G_1,G_2\subseteq G$, we denote by $\la G_1,G_2\ra$
the subgroup of $G$ generated by the elements of $G_1$ and $G_2$.

We are going to use the induction hypothesis in the following lemma.

\begin{lemma}
\label{lemma:induccio-0}
Suppose that two abelian subgroups $A_1,A_2$ of $Z$ satisfy
$|\Pi(A_1\cap A_2)|>\Lambda(K)$ and that $Y^{A_1}\neq\emptyset$.
Then there is an abelian subgroup $A\subseteq \la A_1,A_2\ra$
of index at most $C_T(\delta-1)$ such that $Y^A\neq\emptyset$.
\end{lemma}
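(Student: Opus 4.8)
The plan is to apply the induction hypothesis $\text{T}(\delta-1)$ not to $\Gamma$ itself but to a carefully chosen abelian enlargement $\Gamma_1\supseteq\Gamma$ inside $C_G(\Gamma)$ whose total $p$-depth is strictly smaller than that of $\Gamma$, and then to cut down the abelian subgroup of $C_G(\Gamma_1)$ it produces to one inside $\la A_1,A_2\ra$.

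First I would set $A_0:=A_1\cap A_2$; it is an abelian subgroup of $G$, hence acts on $X$ in a CT way, so by Theorem~\ref{thm:existence-stable-subgroups} it has a $K$-stable subgroup $A_0'$ with $[A_0:A_0']\le\Lambda(K)$. From $[\Pi(A_0):\Pi(A_0')]\le[A_0:A_0']\le\Lambda(K)$ and the hypothesis $|\Pi(A_0)|>\Lambda(K)$ it follows that $\Pi(A_0')$ is nontrivial, i.e. $A_0'\not\subseteq Z_Y$; since then some element of $A_0'$ fails to fix some point of $Y$, splitting it into prime-power parts produces a prime $p_0\in\pi(G)$ with $(A_0')_{p_0}\not\subseteq Z_Y$. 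Now put $\Gamma_1:=\la\Gamma,A_0'\ra$, which is abelian because $A_0'\subseteq C_G(\Gamma)$. It is $K$-acceptable: to a family of $K$-stable subgroups of $\Gamma$ witnessing the $K$-acceptability of $\Gamma$ one adjoins $A_0'$; the enlarged family still generates (now $\Gamma_1$), and its common fixed locus equals $X^{\Gamma}\cap X^{A_0'}=Y^{A_0'}$, which contains $Y^{A_1}\ne\emptyset$ since $A_0'\subseteq A_1$ --- this is the one place where the hypothesis $Y^{A_1}\ne\emptyset$ enters.

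Next I would check that $\sum_{p\in\pi(G)}\depth_p^{G,X}(\Gamma_1)\le\delta-1$. For every prime $p$ we have $\Gamma_p\subseteq(\Gamma_1)_p$, hence $X^{(\Gamma_1)_p}\subseteq X^{\Gamma_p}$; inserting the group $\Gamma_p$ (whose fixed locus is $X^{\Gamma_p}$) at the bottom of a maximal chain of fixed loci computing $\depth_p^{G,X}(\Gamma_1)$ yields a chain for $\Gamma$, so $\depth_p^{G,X}(\Gamma_1)\le\depth_p^{G,X}(\Gamma)$, with a drop of at least one unit whenever the inclusion $X^{(\Gamma_1)_p}\subsetneq X^{\Gamma_p}$ is strict. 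For $p=p_0$ the inclusion is indeed strict, for otherwise $Y=X^{\Gamma}\subseteq X^{\Gamma_{p_0}}\subseteq X^{(A_0')_{p_0}}$, contradicting $(A_0')_{p_0}\not\subseteq Z_Y$. Summing over $p\in\pi(G)$ and using the hypothesis $\sum_{p}\depth_p^{G,X}(\Gamma)\le\delta$ gives the claim.

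Finally I would invoke $\text{T}(\delta-1)$ for $\Gamma_1$: it provides an abelian subgroup $\tilde A\subseteq C_G(\Gamma_1)$ with $[C_G(\Gamma_1):\tilde A]\le C_T(\delta-1)$ and $X^{\tilde A}\cap X^{\Gamma_1}\ne\emptyset$. Since $A_1$ and $A_2$ are abelian and each contains $A_0'$, the group $\la A_1,A_2\ra$ centralises $A_0'$, and being contained in $Z=C_G(\Gamma)$ it also centralises $\Gamma$; hence $\la A_1,A_2\ra\subseteq C_G(\Gamma_1)$. The group $A:=\tilde A\cap\la A_1,A_2\ra$ is then abelian, contained in $\la A_1,A_2\ra$, and of index $[\la A_1,A_2\ra:A]\le[C_G(\Gamma_1):\tilde A]\le C_T(\delta-1)$; moreover $X^{\Gamma_1}=X^{\Gamma}\cap X^{A_0'}\subseteq Y$, so $\emptyset\ne X^{\tilde A}\cap X^{\Gamma_1}\subseteq Y^{\tilde A}\subseteq Y^{A}$, which is exactly what is wanted. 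I expect the only genuinely delicate point to be the choice of $A_0'$ in the second step: it must be small enough to be $K$-stable yet still have nontrivial image in $R=Z/Z_Y$ (this is why $|\Pi(A_1\cap A_2)|>\Lambda(K)$ is assumed), and it must lie inside $A_1$ so that the passage from $\Gamma$ to $\Gamma_1$ preserves $K$-acceptability (this is why one works with $A_1\cap A_2$ rather than with $A_1$ alone); once these two requirements are reconciled, the depth count and the index estimates are routine.
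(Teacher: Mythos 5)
Your proposal is correct and follows essentially the same route as the paper's proof: extract a $K$-stable subgroup $\Gamma'=A_0'$ of $A_1\cap A_2$ of index at most $\Lambda(K)$, observe it is not contained in $Z_Y$, enlarge $\Gamma$ to the $K$-acceptable abelian group $\la\Gamma,\Gamma'\ra$ whose total depth drops, apply $\text{T}(\delta-1)$ to its centraliser (which contains $\la A_1,A_2\ra$), and intersect. The only cosmetic difference is that you pin down a specific prime $p_0$ witnessing the depth decrease via the prime decomposition of a non-trivially acting element, whereas the paper derives the existence of such a prime by contradiction; the two arguments are interchangeable.
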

\begin{proof}
Let $\Gamma'\subseteq A_1\cap A_2$ be a $K$-stable abelian
subgroup of index at most $\Lambda(K)$. Then
$|\Pi(A_1\cap A_2)|>\Lambda(K)$ implies that
$\Gamma'\not\subseteq Z_Y$.
Since $\Gamma'\subseteq Z=C_G(\Gamma)$, $\Gamma$ and $\Gamma'$
generate an abelian group, say $\Gamma^0\subseteq Z$;
since $X^{A_1}\subseteq X^{\Gamma'}$ (because
$\Gamma'\subseteq A_1$) and by assumption $X^{A_1}\cap X^{\Gamma}\neq\emptyset$, we have
$X^{\Gamma'}\cap X^{\Gamma}\neq\emptyset$; combining this with the fact that $\Gamma'$ is $K$-stable and $\Gamma$ is
$K$-acceptable, we deduce that $\Gamma^0$ is $K$-acceptable. We claim that
\begin{equation}
\label{eq:depth-has-decreased}
\sum_{p\in\pi(G)}\depth_p^{G,X}(\Gamma^0)<\delta.
\end{equation}
Since $\Gamma\subseteq \Gamma^0$, we have $\Gamma_p\subseteq\Gamma^0_p$,
and consequently $X^{\Gamma^0_p}\subseteq X^{\Gamma_p}$, for each $p\in\pi(G)$.
Hence,  $\depth_p^{G,X}(\Gamma^0)\leq \depth_p^{G,X}(\Gamma)$ for each $p$.
There exists at least one $p\in\pi(G)$ for which the inclusion
$X^{\Gamma^0_p}\subseteq X^{\Gamma_p}$ is strict (which implies that
$\depth_p^{G,X}(\Gamma^0)<\depth_p^{G,X}(\Gamma)$). Indeed, otherwise we would have
$X^{\Gamma^0_p}=X^{\Gamma_p}$ for each $p$, which would imply
$X^{\Gamma^0}=\bigcap_{p\in\pi(G)}X^{\Gamma^0_p}=\bigcap_{p\in\pi(G)}X^{\Gamma_p}=X^{\Gamma}$.
This would mean that $\Gamma^0\subseteq Z_Y$, which contradicts
$\Gamma'\not\subseteq Z_Y$,
so (\ref{eq:depth-has-decreased}) holds true.

Since $A_1$ and $A_2$ are abelian and $\Gamma'\subseteq A_1\cap A_2$, we have
$\la A_1,A_2\ra\subseteq C_G(\Gamma')$. Hence,
$\la A_1,A_2\ra\subseteq C_G(\Gamma)\cap C_G(\Gamma')=C_G(\Gamma^0)$.
By the induction hypothesis, $C_G(\Gamma^0)$ contains an abelian subgroup $B$
of index at most $C_T(\delta-1)$ such that $X^{B}\cap X^{\Gamma^0}\neq\emptyset$.
In particular, $A:=\la A_1,A_2\ra \cap B$
is abelian and has index at most $C_T(\delta-1)$ in $\la A_1,A_2\ra$.
Furthermore, since $A\subseteq B$ and
$\Gamma\subseteq \Gamma^0$, we have
$\emptyset\neq X^{B}\cap X^{\Gamma^0}\subseteq X^A\cap X^{\Gamma}=Y^A$, so $Y^A\neq\emptyset$.
\end{proof}

\begin{lemma}
\label{lemma:intersection-transitive}
Let $\Theta_1,\Theta_2,\Theta_3$ be $(C_T(\delta-1)^3+1)$-stable abelian
subgroups of $Z$ satisfying
$Y^{\Theta_j}\neq\emptyset$ and
$$|\Pi(\Theta_j)|>C_{\Jord}\Lambda(K)^2C_T(\delta-1)^4$$
for each $j$.
Suppose that $Y^{\Theta_1}\cap Y^{\Theta_2}\neq\emptyset$
and $Y^{\Theta_2}\cap Y^{\Theta_3}\neq\emptyset$. Then
$Y^{\Theta_1}\cap Y^{\Theta_3}\neq\emptyset$.
\end{lemma}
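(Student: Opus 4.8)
The plan is to manufacture a single abelian subgroup $A\subseteq Z$ with $Y^A\neq\emptyset$ whose intersections with $\Theta_1$ and with $\Theta_3$ have index small enough that the strong stability hypothesis forces $X^{\Theta_1\cap A}=X^{\Theta_1}$ and $X^{\Theta_3\cap A}=X^{\Theta_3}$; then $Y^A\subseteq Y^{\Theta_1}\cap Y^{\Theta_3}$, which is the claim. (Throughout, $\Pi\colon Z\to R$ is the projection from (\ref{eq:def-B}), and I use freely that $C_T(\delta-1)\geq C_{\Jord}$.) First I would pick $y_1\in Y^{\Theta_1}\cap Y^{\Theta_2}$ and $y_2\in Y^{\Theta_2}\cap Y^{\Theta_3}$. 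The stabilisers $Z_{y_1},Z_{y_2}\subseteq Z$ embed into $\GL(T_{y_i}X)\cong\GL(\dim X,\RR)$ by statement (2) of Lemma \ref{lemma:linearization}, so Jordan's Theorem \ref{thm:Jordan-classic} provides abelian subgroups $A_1\subseteq Z_{y_1}$, $A_2\subseteq Z_{y_2}$ of index at most $C_{\Jord}$. Since $\Theta_1,\Theta_2\subseteq Z_{y_1}$ and $\Theta_2,\Theta_3\subseteq Z_{y_2}$, all four indices $[\Theta_1:\Theta_1\cap A_1]$, $[\Theta_2:\Theta_2\cap A_1]$, $[\Theta_2:\Theta_2\cap A_2]$, $[\Theta_3:\Theta_3\cap A_2]$ are at most $C_{\Jord}$, and $y_1\in Y^{A_1}$, $y_2\in Y^{A_2}$.

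Next I would bridge $A_1$ and $A_2$ through $\Theta_2$. The subgroup $\Theta_2\cap A_1\cap A_2$ has index at most $C_{\Jord}^2$ in $\Theta_2$, so $|\Pi(\Theta_2\cap A_1\cap A_2)|\geq|\Pi(\Theta_2)|/C_{\Jord}^2>\Lambda(K)$ by the assumed lower bound on $|\Pi(\Theta_2)|$; since $\Theta_2\cap A_1\cap A_2\subseteq A_1\cap A_2$ this gives $|\Pi(A_1\cap A_2)|>\Lambda(K)$. Now Lemma \ref{lemma:induccio-0} --- the one place where the induction hypothesis T$(\delta-1)$ is used --- produces an abelian subgroup $A\subseteq\langle A_1,A_2\rangle$ with $[\langle A_1,A_2\rangle:A]\leq C_T(\delta-1)$ and $Y^A\neq\emptyset$. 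Because $\Theta_1\cap A_1\subseteq A_1\subseteq\langle A_1,A_2\rangle$, an index computation yields $[\Theta_1:\Theta_1\cap A]\leq[\Theta_1:\Theta_1\cap A_1\cap A]\leq C_{\Jord}\,C_T(\delta-1)\leq C_T(\delta-1)^3$, and symmetrically $[\Theta_3:\Theta_3\cap A]\leq C_T(\delta-1)^3$.

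Finally, since $\Theta_1$ is $(C_T(\delta-1)^3+1)$-stable and $[\Theta_1:\Theta_1\cap A]<C_T(\delta-1)^3+1$, Lemma \ref{lemma:good-subgroup-of-stable} gives $X^{\Theta_1\cap A}=X^{\Theta_1}$, and likewise $X^{\Theta_3\cap A}=X^{\Theta_3}$. From $\Theta_1\cap A\subseteq A$ we get $X^A\subseteq X^{\Theta_1\cap A}=X^{\Theta_1}$, hence $Y^A\subseteq Y^{\Theta_1}$, and similarly $Y^A\subseteq Y^{\Theta_3}$; therefore $\emptyset\neq Y^A\subseteq Y^{\Theta_1}\cap Y^{\Theta_3}$. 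The only delicate point is not geometric but the accounting of constants: one must check that the index losses from Jordan ($C_{\Jord}$) and from Lemma \ref{lemma:induccio-0} ($C_T(\delta-1)$) stay within the exponent-$3$ budget built into the stability hypothesis, and that the loss $C_{\Jord}^2$ on passing to $\Pi(\Theta_2\cap A_1\cap A_2)$ stays within the budget $C_{\Jord}\Lambda(K)^2C_T(\delta-1)^4$ on $|\Pi(\Theta_j)|$ --- which is exactly what pins down the peculiar-looking numbers in the statement.
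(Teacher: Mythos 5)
Your proof is correct, and it takes a genuinely shorter route than the paper's. The paper never uses the bridge $\Theta_2\subseteq Z_{y_1}\cap Z_{y_2}$ as directly as you do: instead it fixes a single point $x\in Y^{\Theta_1}\cap Y^{\Theta_2}$, extracts a $K$-stable abelian subgroup $Z_x'\subseteq Z_x$ of index at most $C_{\Jord}\Lambda(K)$ via Lemma \ref{lemma:A}, merges each $\Theta_j$ ($j=1,2$) with $Z_x'$ by Lemma \ref{lemma:induccio-0} to get $\Xi_j$, merges $\Xi_1$ with $\Xi_2$ to get $\Psi_{12}$ with $[\Theta_j:\Theta_j\cap\Psi_{12}]\leq C_T(\delta-1)^2$, repeats everything at a point of $Y^{\Theta_2}\cap Y^{\Theta_3}$ to get $\Psi_{23}$, and only then merges $\Psi_{12}$ with $\Psi_{23}$ (using $\Theta_2$ as the common core, exactly as you do but one level up) to produce the final $\Phi$ with $[\Theta_i:\Theta_i\cap\Phi]\leq C_T(\delta-1)^3$ for $i=1,3$. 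That is seven invocations of Lemma \ref{lemma:induccio-0} and an index loss of $C_T(\delta-1)^3$, which is what dictates the exponent $3$ in the stability hypothesis. You observe that the raw Jordan subgroups $A_1\subseteq Z_{y_1}$, $A_2\subseteq Z_{y_2}$ already intersect $\Theta_2$ in index at most $C_{\Jord}^2$, so $|\Pi(A_1\cap A_2)|>\Lambda(K)$ holds immediately and a single application of Lemma \ref{lemma:induccio-0} suffices, with the smaller loss $C_{\Jord}\,C_T(\delta-1)\leq C_T(\delta-1)^2$. All your index and $\Pi$-image estimates check out against the budgets $C_T(\delta-1)^3+1$ and $C_{\Jord}\Lambda(K)^2C_T(\delta-1)^4$ (with room to spare, since $C_T(\delta-1)\geq C_{\Jord}\geq 1$ and $\Lambda(K)\geq 1$), and the endgame via Lemma \ref{lemma:good-subgroup-of-stable} is identical to the paper's. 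Your argument would in fact allow weaker numerical hypotheses in the statement, hence smaller constants $K_2,\dots,K_5$ downstream; the paper's more elaborate staging buys nothing that is used elsewhere, so this is a clean simplification rather than a trade-off.
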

\begin{proof}
In the computations below we will implicitly use the
fact that all numbers $C_{\Jord}$, $\Lambda(K)$ and $C_T(\delta-1)$ are not smaller than $1$.
Let $x\in Y^{\Theta_1}\cap Y^{\Theta_2}$ be any point.
We have inclusions $\Theta_1,\Theta_2\subseteq Z_x$ and
$\Pi(\Theta_1),\Pi(\Theta_2)\subseteq R_x=\Pi(Z_x)$. In particular, we have
$$|R_x|\geq C_{\Jord}\Lambda(K)^2C_T(\delta-1)^4.$$
Let $Z_x'\subseteq Z_x$ be a $K$-stable abelian subgroup of
index at most $C_{\Jord}\Lambda(K)$.
In the following formulas $j$ denotes either $1$ or $2$.
We have:
$$|\Pi(\Theta_j\cap Z_x')|=\frac{|\Pi(\Theta_j)|}
{[\Pi(\Theta_j):\Pi(\Theta_j\cap Z_x')]}
\geq \frac{|\Pi(\Theta_j)|}{[\Theta_j:\Theta_j\cap Z_x']}
\geq \frac{|\Pi(\Theta_j)|}{C_{\Jord}\Lambda(K)}>\Lambda(K).$$
So by Lemma \ref{lemma:induccio-0} there exists an abelian subgroup
$\Xi_j\subseteq \la\Theta_j,Z_x'\ra$ of index at most $C_T(\delta-1)$
satisfying $Y^{\Xi_j}\neq\emptyset$.
We have
$[Z_x':Z_x'\cap \Xi_j]\leq C_T(\delta-1)$, so
$$|Z_x'\cap \Xi_1\cap \Xi_2|\geq \frac{|Z_x'|}{C_T(\delta-1)^2}.$$
Combining this estimate with $|Z_x|/|Z_x'|=[Z_x:Z_x']\leq C_{\Jord}\Lambda(K)$
and $|Z_Y|=|Z_x|/|R_x|$ (which follows from the fact that $R_x=Z_x/Z_Y$) we get:
\begin{align*}
|\Pi(\Xi_1\cap\Xi_2)| &\geq |\Pi(Z_x'\cap \Xi_1\cap\Xi_2)|\geq
\frac{|Z_x'\cap \Xi_1\cap\Xi_2|}{|Z_Y|}\geq
\frac{|Z_x'|}{C_T(\delta-1)^2}\frac{|R_x|}{|Z_x|} \\
&\geq
\frac{|R_x|}{C_T(\delta-1)^2 C_{\Jord}\Lambda(K)}>\Lambda(K)
\end{align*}
Using again Lemma \ref{lemma:induccio-0} we deduce that there exists
an abelian subgroup $\Psi_{12}\subseteq \la\Xi_1,\Xi_2\ra$ of index at most $C_T(\delta-1)$
and satisfying $Y^{\Psi_{12}}\neq\emptyset$. Tracing the definitions we estimate
$$[\Theta_j:\Theta_j\cap\Psi_{12}]\leq C_T(\delta-1)^2.$$

Repeating the previous arguments with $j=2,3$ instead of $1,2$ and replacing
$x$ by any point in the intersection $Y^{\Theta_2}\cap Y^{\Theta_3}$ we
construct another abelian subgroup $\Psi_{23}\subseteq Z$ satisfying
$$[\Theta_j:\Theta_j\cap\Psi_{23}]\leq C_T(\delta-1)^2$$
for $j=2,3$.

Since both $\Theta_2\cap\Psi_{12}$ and $\Theta_2\cap\Psi_{23}$
have index at most $C_T(\delta-1)^2$ in $\Theta_2$, we estimate
$$|\Psi_{12}\cap\Psi_{23}|\geq |\Psi_{12}\cap\Psi_{23}\cap\Theta_2|\geq
\frac{|\Theta_2|}{C_T(\delta-1)^4}.$$
Hence, using the
equality $|\Theta_2\cap Z_Y|=|\Theta_2|/|\Pi(\Theta_2)|$, we have:
\begin{align*}
|\Pi(\Psi_{12}\cap\Psi_{23})| &\geq |\Pi(\Psi_{12}\cap\Psi_{23}\cap\Theta_2)|\geq
\frac{|\Psi_{12}\cap\Psi_{23}\cap\Theta_2|}{|\Theta_2\cap Z_Y|} \\
&\geq \frac{|\Theta_2|}{C_T(\delta-1)^4}\frac{|\Pi(\Theta_2)|}{|\Theta_2|}=
\frac{|\Pi(\Theta_2)|}{C_T(\delta-1)^4}>\Lambda(K).
\end{align*}
Since $Y^{\Psi_{12}}\neq\emptyset$, we can apply Lemma \ref{lemma:induccio-0} once
again to conclude the existence of an abelian subgroup
$\Phi\subseteq \la\Psi_{12},\Psi_{23}\ra$ of index at most $C_T(\delta-1)$ such that
$Y^{\Phi}\neq\emptyset$. Then the bounds
$[\Theta_1:\Theta_1\cap\Psi_{12}]\leq C_T(\delta-1)^2$
and $[\Theta_3:\Theta_3\cap\Psi_{23}]\leq C_T(\delta-1)^2$ imply that
$$[\Theta_1:\Theta_1\cap\Phi]\leq C_T(\delta-1)^3,\qquad\qquad
[\Theta_3:\Theta_3\cap\Phi]\leq C_T(\delta-1)^3.$$
Since $\Theta_1$ and $\Theta_3$ are $(C_T(\delta-1)^3+1)$-stable, Lemma \ref{lemma:good-subgroup-of-stable}
implies that
$X^{\Theta_i}=X^{\Theta_i\cap\Phi}$ for $i=1,3$. Finally, since $\Phi$ contains both $\Theta_1\cap\Phi$
and $\Theta_3\cap\Phi$, we conclude
$$\emptyset\neq Y^{\Phi}=X^{\Phi}\cap X^{\Gamma}\subseteq X^{\Theta_1\cap\Phi}\cap X^{\Theta_3\cap\Phi}\cap X^{\Gamma}
=X^{\Theta_1}\cap X^{\Theta_2}\cap X^{\Gamma}=Y^{\Theta_1}\cap Y^{\Theta_3},$$
so the proof of the lemma is complete.
\end{proof}

Define the following constants:
\begin{align*}
K_2 &:= \max\{(C_T(\delta-1)^3+1),K_{\chi}\}, \\
K_3 &:= \max\{I(K_2),\,\chi(X)C_{\Jord}\Lambda(K_2)K_2\}, \\
K_4 &:= C_{\Jord}\Lambda(K)^2C_T(\delta-1)^4+1, \\
K_5 &:= \max\{\alpha K_4,\,(\chi(X)C_{\Jord}^2\Lambda(K)^2\Lambda(K_2)C_T(\delta-1)^4)^r+1\}.
\end{align*}
Let $\gG$ be the collection of $K_3$-stable abelian subgroups $\Theta\subseteq Z$
satisfying
$$|\Pi(\Theta)|\geq K_5,\qquad\qquad Y^{\Theta}\neq\emptyset.$$
Define a relation $\sim$ between the elements of $\gG$ by setting, for every
$\Theta,\Theta'\in\gG$,
$$\Theta\sim\Theta'\qquad\Longleftrightarrow\qquad Y^{\Theta}\cap Y^{\Theta'}\neq\emptyset.$$
By Lemma \ref{lemma:intersection-transitive}, this defines an equivalence relation on $\gG$.

\begin{lemma}
\label{lemma:big-intersection}
Suppose that $\Theta_1,\dots,\Theta_s\in\gG$ belong to the same $\sim$-equivalence class.
Then $Y^{\Theta_1}\cap\dots\cap Y^{\Theta_s}\neq\emptyset$ and
$\chi(Y^{\Theta_1}\cap\dots\cap Y^{\Theta_s})=\chi(X).$
\end{lemma}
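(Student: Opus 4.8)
The plan is to prove this by induction on $s$, using Lemma \ref{lemma:intersection-transitive} as the engine that lets us "merge" two elements of $\gG$ lying in the same equivalence class into a single element of $\gG$ whose fixed-point set is the intersection of the original ones (up to the Euler-characteristic bookkeeping). First I would dispose of the base cases: for $s=1$ the statement is that $\chi(Y^{\Theta_1})=\chi(X)$, which follows because $\Theta_1$ is $K_3$-stable, hence $K_2$-stable (as $K_3\geq I(K_2)\geq K_2$), hence $K_\chi$-stable (as $K_2\geq K_\chi$), and then $X^{\Theta_1}\cap X^{\Gamma}=Y^{\Theta_1}$ together with $K$-acceptability of $\Gamma$ — more precisely, the group generated by $\Gamma$ and $\Theta_1$ is $K$-acceptable since $\Theta_1$ is $K$-stable and $Y^{\Theta_1}\neq\emptyset$ — gives $\chi(Y^{\Theta_1})=\chi(X^{\langle\Gamma,\Theta_1\rangle})=\chi(X)$. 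For $s=2$, the definition of $\sim$ already gives $Y^{\Theta_1}\cap Y^{\Theta_2}\neq\emptyset$, and I would run the merging argument once as described below.

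The inductive step is where the real work is. Suppose $\Theta_1,\dots,\Theta_s$ all lie in one $\sim$-class. The idea is to construct a single $\Theta\in\gG$, in the same $\sim$-class, with $Y^{\Theta}=Y^{\Theta_1}\cap\dots\cap Y^{\Theta_s}$ (or at least with the same Euler characteristic), and then apply the inductive hypothesis to the shorter list $\Theta,\Theta_{?},\dots$. Concretely I would proceed as in the proof of Lemma \ref{lemma:intersection-transitive}: pick a point $x$ in the (nonempty, by transitivity of $\sim$ applied inductively, or just pairwise) intersection, take a $K$-stable abelian subgroup $Z_x'\subseteq Z_x$ of index at most $C_{\Jord}\Lambda(K)$, use the lower bound $|\Pi(\Theta_j)|\geq K_5$ (which is much larger than $\Lambda(K)$, $\alpha$, etc.) to guarantee that all the relevant indices $|\Pi(\Theta_j\cap Z_x')|$, $|\Pi(\Xi_1\cap\dots)|$ stay above $\Lambda(K)$, and repeatedly invoke Lemma \ref{lemma:induccio-0} to build abelian subgroups with nonempty $Y$-fixed locus and controlled index inside each $\Theta_j$. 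Then Theorem \ref{thm:intersection-stabilizes-any-group} (with $\kappa=K_2$, noting $K_3\geq I(K_2)$ so each $\Theta_j$ is $I(K_2)$-stable) produces a $K_2$-stable abelian $\Theta\subseteq Z$ with $X^{\Theta}=X^{\Theta_1}\cap\dots\cap X^{\Theta_s}$, so that $Y^{\Theta}=Y^{\Theta_1}\cap\dots\cap Y^{\Theta_s}$, and inequality (\ref{eq:interseccio-segueix-sent-gran}) forces $|\Pi(\Theta)|\geq \alpha^{-1}\max_j|\Pi(\Theta_j)|\geq \alpha^{-1}K_5\geq K_4$, which is large enough; one then checks $\Theta$ is actually in $\gG$ by verifying $|\Pi(\Theta)|\geq K_5$ — this is the one place I'd need to be slightly careful, and it may require choosing $K_5$ so that $\alpha^{-1}K_5\geq K_5$ is absurd, meaning instead the merging should be done incrementally two-at-a-time so that only a bounded number of $\alpha$-factors accumulate. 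So the cleaner route: merge $\Theta_1$ and $\Theta_2$ into $\Theta_{12}\in\gG$ with $Y^{\Theta_{12}}=Y^{\Theta_1}\cap Y^{\Theta_2}$, which lies in the same $\sim$-class (it meets $Y^{\Theta_3}$, say, via transitivity), then apply the inductive hypothesis to $\Theta_{12},\Theta_3,\dots,\Theta_s$ — a list of length $s-1$.

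For the merging of two elements to land back in $\gG$, the key quantitative point is that $K_3$ and $K_5$ were chosen with enough slack: $\Theta_{12}$ comes out $K_2$-stable from Theorem \ref{thm:intersection-stabilizes-any-group}, and one needs $K_2$-stable to be enough to be a legitimate witness, which it is not quite (elements of $\gG$ must be $K_3$-stable). The fix is to apply Theorem \ref{thm:existence-stable-subgroups} or rather to observe that what really matters downstream is the Euler characteristic and the size of the image in $R$; so I would either (a) relax the running claim to "there is a $K_2$-stable abelian $\Theta\subseteq Z$ with $Y^\Theta=\bigcap Y^{\Theta_j}$", from which $\chi=\chi(X)$ follows by $K_\chi$-stability, or (b) carry the merge only at the level of fixed-point sets and never re-insert into $\gG$. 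Either way the Euler characteristic conclusion $\chi(Y^{\Theta_1}\cap\dots\cap Y^{\Theta_s})=\chi(X)$ follows from the $s=1$ argument applied to the merged group. The main obstacle I anticipate is precisely this stability-bookkeeping: ensuring the constant you get out of each application of Theorem \ref{thm:intersection-stabilizes-any-group} (which drops by a factor $C_{\Jord}C_\chi$) and each application of Lemma \ref{lemma:induccio-0} (index $C_T(\delta-1)$) does not degrade below the thresholds $K_2, K_3$ needed to reapply Lemmas \ref{lemma:intersection-transitive} and \ref{lemma:good-subgroup-of-stable}; the definitions of $K_2$ through $K_5$ are evidently rigged so that a single merge step stays within budget, so the induction must be structured to perform exactly one merge per step rather than collapsing all $s$ groups at once.
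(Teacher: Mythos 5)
Your toolbox is the right one (induction on $s$, Lemma \ref{lemma:intersection-transitive} as the transitivity engine, Theorem \ref{thm:intersection-stabilizes-any-group} to package intersections as $Y^{\Theta}$ for a single group, and the final $\chi$-computation via $K$-acceptability of $\langle\Gamma,\Theta_1,\dots,\Theta_s\rangle$ plus Theorem \ref{thm:good-Gamma-has-gamma}, which is exactly how the paper concludes). But the induction as you structure it does not close, and the difficulty you flag at the end is real rather than a bookkeeping detail you can wave at. Your first route is circular: Theorem \ref{thm:intersection-stabilizes-any-group} takes the nonemptiness of $X^{\Theta_1}\cap\dots\cap X^{\Theta_s}$ as a \emph{hypothesis}, so you cannot invoke it on the full list to produce the $\Theta$ whose nonempty fixed set you are trying to establish. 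Your ``cleaner route'' — merge $\Theta_1,\Theta_2$ into $\Theta_{12}$ and recurse on $\Theta_{12},\Theta_3,\dots,\Theta_s$ — fails because, as you note, $\Theta_{12}$ is only $K_2$-stable with $|\Pi(\Theta_{12})|\geq\alpha^{-1}K_5$, so it is not in $\gG$; and if you relax the running claim so that its \emph{inputs} may be such degraded witnesses, the next merge degrades them by another factor of $C_{\Jord}C_{\chi}$ in stability and $\alpha$ in $|\Pi(\cdot)|$, and since $s$ is arbitrary no fixed choice of $K_2,\dots,K_5$ survives. Your closing prescription — ``one merge per step rather than collapsing all $s$ groups at once'' — is exactly backwards: incremental pairwise merging is the structure that accumulates unbounded loss.

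The paper's resolution is to recurse only on the original $\gG$-elements and to let witnesses degrade exactly one level, never more. At step $s$: the induction hypothesis gives $Y^{\Theta_1}\cap\dots\cap Y^{\Theta_{s-1}}\neq\emptyset$; \emph{now} Theorem \ref{thm:intersection-stabilizes-any-group} may be applied to the original $\Theta_1,\dots,\Theta_{s-1}$ (all $K_3$-stable, i.e.\ $I(K_2)$-stable), and since it handles arbitrarily many groups with a single factor of loss it yields a $K_2$-stable $\Theta$ with $Y^{\Theta}=\bigcap_{j<s}Y^{\Theta_j}$ and $|\Pi(\Theta)|\geq\alpha^{-1}K_5\geq K_4$; a second, independent application to $\Theta_{s-1},\Theta_s$ yields $\Theta'$ with $Y^{\Theta'}=Y^{\Theta_{s-1}}\cap Y^{\Theta_s}$. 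These witnesses are consumed by Lemma \ref{lemma:intersection-transitive} applied to $\Theta,\Theta_{s-1},\Theta'$ (the thresholds $K_2\geq C_T(\delta-1)^3+1$ and $K_4>C_{\Jord}\Lambda(K)^2C_T(\delta-1)^4$ are rigged precisely so that one level of degradation is admissible there), giving $Y^{\Theta}\cap Y^{\Theta'}\neq\emptyset$, i.e.\ $\bigcap_{j\leq s}Y^{\Theta_j}\neq\emptyset$ — and the witnesses are then discarded, never fed back into Theorem \ref{thm:intersection-stabilizes-any-group} or into the recursion. With that restructuring your argument becomes the paper's proof; your $s=1$ base case and the Euler characteristic step are already correct.
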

\begin{proof}
We use induction on $s$. The cases $s=1,2$ are trivial, so let us assume that $s\geq 3$
and that the statement has been proved for smaller values of $s$. In particular we may
assume that $Y^{\Theta_1}\cap\dots\cap Y^{\Theta_{s-1}}\neq\emptyset$. This implies,
by Theorem \ref{thm:intersection-stabilizes-any-group}, that there exists
a $K_2$-stable abelian subgroup $\Theta\subseteq Z$ such that $|\Pi(\Theta)|\geq K_4$ and
$X^{\Theta_1}\cap\dots\cap X^{\Theta_{s-1}}=X^{\Theta}$, so in particular
$Y^{\Theta_1}\cap\dots\cap Y^{\Theta_{s-1}}=Y^{\Theta}$.
Similarly, since $\Theta_{s-1}\sim\Theta_s$, there exists
a $K_2$-stable abelian subgroup $\Theta'\subseteq Z$
satisfying $|\Pi(\Theta')|\geq K_4$ and $Y^{\Theta_{s-1}}\cap Y^{\Theta_s}=Y^{\Theta'}$. Since we clearly have
$Y^{\Theta}\cap Y^{\Theta_{s-1}}\neq\emptyset$ and
$Y^{\Theta'}\cap Y^{\Theta_{s-1}}\neq\emptyset$, we may apply
Lemma \ref{lemma:intersection-transitive} to $\Theta,\Theta_{s-1},\Theta'$ and deduce
that
$Y^{\Theta}\cap Y^{\Theta'}\neq\emptyset$, which is equivalent to
$Y^{\Theta_1}\cap\dots\cap Y^{\Theta_s}\neq\emptyset$.
To prove the last statement, note that
$Y^{\Theta_1}\cap\dots\cap Y^{\Theta_s}=X^{\Theta_1}\cap\dots\cap X^{\Theta_s}\cap X^{\Gamma}$.
Since $\Gamma$ is $K$-acceptable and each $\Theta_j$ is also $I(K_{\chi})$-stable
(indeed, $K_2\geq K_{\chi}$ implies $K_3=I(K_2)\geq I(K_{\chi})$), there exists a $K_{\chi}$-stable abelian subgroup
$\Delta\subseteq Z$
satisfying $X^{\Delta}=X^{\Theta_1}\cap\dots\cap X^{\Theta_s}\cap X^{\Gamma}$.
This implies, by Theorem \ref{thm:good-Gamma-has-gamma}, that
$\chi(X^{\Theta_1}\cap\dots\cap X^{\Theta_s}\cap X^{\Gamma})=\chi(X^{\Delta})=\chi(X)$.
\end{proof}

Now let $\fF:=\{Y^{\Theta}\mid \Theta\in\gG\}$.
Define a relation
$\approx$ on $\fF$ by declaring that two elements $F,F'\in\fF$
are related,
$F\approx F'$, if and only if $F\cap F'\neq\emptyset$.
By Lemma \ref{lemma:intersection-transitive},
$\approx$ is an equivalence relation. Furthermore, if $F_1,\dots,F_s$ belong
to the same $\approx$-equivalence class, then $\chi(F_1\cap\dots\cap F_s)=\chi(X)$.
The action of $R$ on $Y$ induces an action of $R$ on the set $\fF$,
since for every $\Theta\in\gG$ and any
$\rho\in R$ we have $\rho\cdot Y^{\Theta}=
\rho\cdot Y^{\Pi(\Theta)}=Y^{\rho\Pi(\Theta)\rho^{-1}}=
Y^{\eta\Theta\eta^{-1}}$, where $\eta\in \Pi^{-1}(\rho)$ is any lift of $\rho$.
This action is obviously compatible with the relation $\approx$.
Hence, if we define $\hH:=\fF/\approx$, then $\hH$ inherits an action of $R$.
Let $h_1,\dots,h_l\in\hH$ be elements such that $\hH=\bigsqcup_j Rh_j$. Let also
$$d:=|R|,\qquad\qquad e_j:=|R_{h_j}|,$$
where $R_{h_j}\subseteq R$ is the stabilizer of $h_j$. Define
$$Y^{\gG}:=\bigcup_{F\in\fF}F.$$
It follows from all the previous considerations that
\begin{equation}
\label{eq:euler-Y-gG}
\chi(Y^{\gG})=\chi(X)\cdot|\hH|
=\chi(X)\left(\sum_{j=1}^l \frac{d}{e_j}\right).
\end{equation}

\begin{lemma}
\label{lemma:l-at-most-r}
The number $l$ of $R$-orbits on $\hH$ is at most $r$.
\end{lemma}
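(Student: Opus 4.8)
The plan is to cover $\hH$ by at most $|\pi(R)|\le|\pi(G)|\le r$ orbits of $R$; as this is exactly the assertion $l\le r$, it suffices. Concretely, for each prime $p\in\pi(R)$ I would single out a subset $\hH^{(p)}\subseteq\hH$ and prove (i) $\hH=\bigcup_{p\in\pi(R)}\hH^{(p)}$, and (ii) each $\hH^{(p)}$ lies in a single $R$-orbit.

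For the preliminaries I would fix, for each $p\in\pi(R)$, a Sylow $p$-subgroup $Z^{(p)}\subseteq Z$. It acts on $X$ in an ECT way and preserves $Y=X^{\Gamma}$, which satisfies $\chi(Y)=\chi(X)\neq0$ because $\Gamma$ is $K$-acceptable, so Lemma~\ref{lemma:B} with $\kappa=K_2$ gives an abelian $K_2$-stable $Q_p\subseteq Z^{(p)}$ with $[Z^{(p)}:Q_p]\le S_2:=\chi(X)C_{\Jord}\Lambda(K_2)$ and $Y^{Q_p}\neq\emptyset$. Putting $N:=\chi(X)C_{\Jord}^2\Lambda(K)^2\Lambda(K_2)C_T(\delta-1)^4$, one has $N/S_2=C_{\Jord}\Lambda(K)^2C_T(\delta-1)^4$ and $K_5\ge N^r+1$ — the auxiliary constants were chosen precisely so these relations hold. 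Declare $h\in\hH^{(p)}$ if $h=[Y^{\Theta}]$ for some $\Theta\in\gG$ whose $p$-part has $|\Pi(\Theta_p)|>N$. For an arbitrary $h=[Y^{\Theta}]$ with $\Theta\in\gG$ we have $|\Pi(\Theta)|\ge K_5>N^r$ while $\Pi(\Theta)=\prod_q\Pi(\Theta_q)$ is divisible by at most $r$ primes; hence $|\Pi(\Theta_p)|>N$ for some $p$, which proves (i).

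For (ii), take $h,h'\in\hH^{(p)}$, represented by $\Theta,\Theta'\in\gG$ with $|\Pi(\Theta_p)|,|\Pi(\Theta'_p)|>N$. Conjugating $\Theta,\Theta'$ by elements of $Z$ only moves $h,h'$ inside their $R$-orbits and keeps them in $\gG$, so by Sylow's theorem I may assume $\Theta_p,\Theta'_p\subseteq Z^{(p)}$. Then $[\Theta_p:\Theta_p\cap Q_p]\le[Z^{(p)}:Q_p]\le S_2$, so $|\Pi(\Theta_p\cap Q_p)|\ge|\Pi(\Theta_p)|/S_2>N/S_2\ge\Lambda(K)$; since $Y^{\Theta_p}\supseteq Y^{\Theta}\neq\emptyset$, Lemma~\ref{lemma:induccio-0} gives an abelian $\Xi\subseteq\la\Theta_p,Q_p\ra$ of index $\le C_T(\delta-1)$ with $Y^{\Xi}\neq\emptyset$. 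As $\Theta_p$ is $K_3$-stable, $Q_p$ is $K_2$-stable and $C_T(\delta-1)<K_2\le K_3$, Lemma~\ref{lemma:good-subgroup-of-stable} yields $X^{\Theta_p}=X^{\Theta_p\cap\Xi}\supseteq X^{\Xi}$ and $X^{Q_p}=X^{Q_p\cap\Xi}\supseteq X^{\Xi}$, whence $\emptyset\neq Y^{\Xi}\subseteq Y^{\Theta_p}\cap Y^{Q_p}$; the same argument gives $Y^{\Theta'_p}\cap Y^{Q_p}\neq\emptyset$.

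Finally I would chain these intersections with Lemma~\ref{lemma:intersection-transitive}. The groups $\Theta_p,Q_p,\Theta'_p$ are $(C_T(\delta-1)^3+1)$-stable abelian subgroups of $Z$ with nonempty fixed loci in $Y$ and $|\Pi(\cdot)|>C_{\Jord}\Lambda(K)^2C_T(\delta-1)^4$ (for $\Theta_p,\Theta'_p$ because $|\Pi(\cdot)|>N$; for $Q_p$ because $|\Pi(Q_p)|\ge|\Pi(Z^{(p)})|/S_2\ge|\Pi(\Theta_p)|/S_2>N/S_2$), so Lemma~\ref{lemma:intersection-transitive} applied to $\Theta_p,Q_p,\Theta'_p$ gives $Y^{\Theta_p}\cap Y^{\Theta'_p}\neq\emptyset$; applied to $\Theta,\Theta_p,\Theta'_p$ (legitimate since $\Theta\in\gG$ is $K_3$-stable with $|\Pi(\Theta)|\ge K_5>C_{\Jord}\Lambda(K)^2C_T(\delta-1)^4$, and $Y^{\Theta}\cap Y^{\Theta_p}=Y^{\Theta}\neq\emptyset$) it gives $Y^{\Theta}\cap Y^{\Theta'_p}\neq\emptyset$; and applied to $\Theta,\Theta'_p,\Theta'$ it gives $Y^{\Theta}\cap Y^{\Theta'}\neq\emptyset$. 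Thus $[Y^{\Theta}]=[Y^{\Theta'}]$ in $\hH$, so the (conjugated) $h,h'$ agree and the original $h,h'$ share an $R$-orbit; this proves (ii) and hence the lemma. I expect this last step to be the main obstacle: a $p$-part $\Theta_p$ need not lie in $\gG$, so it does not by itself name a class of $\hH$, and the remedy is to run Lemma~\ref{lemma:intersection-transitive} along triples interleaving the $p$-parts with the full groups $\Theta,\Theta'\in\gG$ — which is exactly why $\gG$ is cut out by $|\Pi(\Theta)|\ge K_5$ rather than by $|\Pi(\Theta)|\ge N$, and why $K_2,K_3$ are taken so large.
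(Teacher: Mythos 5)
Your proposal is correct and follows essentially the same route as the paper: fix Sylow subgroups $Z_p$ and reference groups $Q_p$ via Lemma \ref{lemma:B}, use $K_5>N^r$ to pigeonhole a prime $p$ with $|\Pi(\Theta_p)|>N$, connect $Y^{\Theta}$ to a conjugate of $Y^{Q_p}$, and chain with Lemma \ref{lemma:intersection-transitive} to put all classes with the same $p$ in one $R$-orbit. The only (harmless) deviation is in the middle step: the paper feeds $\Theta_p\cap\eta Q_p\eta^{-1}$ directly as the middle term of Lemma \ref{lemma:intersection-transitive}, whereas you first manufacture $\Xi$ via Lemma \ref{lemma:induccio-0} and Lemma \ref{lemma:good-subgroup-of-stable} to get $Y^{\Theta_p}\cap Y^{Q_p}\neq\emptyset$ --- the underlying index estimates are identical.
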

\begin{proof}
Fix for each $p\in\pi(G)$ a Sylow subgroup $R_p$ of $R$ and a $p$-Sylow subgroup
$Z_p\subseteq Z$ such that $\Pi(Z_p)=R_p$. By Lemma \ref{lemma:B} there exists
an abelian $K_2$-stable subgroup $Q_p$ of $Z_p$ satisfying
$[Z_p:Q_p]\leq S_{\delta}:=\chi(X)C_{\Jord}\Lambda(K_2)$ and $Y^{Q_p}\neq\emptyset$.

Now let $\Theta\in\gG$ be any element. Since $|\Pi(\Theta)|>(\chi(X)C_{\Jord}^2\Lambda(K)^2\Lambda(K_2)C_T(\delta-1)^4)^r$
and $|\pi(G)|\leq r$, there exists a prime $p\in\pi(G)$ such that the $p$-Sylow subgroup
of $\Pi(\Theta)$
has $>\chi(X)C_{\Jord}^2\Lambda(K)^2\Lambda(K_2)C_T(\delta-1)^4$ elements (since $\Theta$ is
abelian, $\Pi(\Theta)$ has a unique $p$-Sylow subgroup, which is equal to $\Pi(\Theta_p)$).
Let us fix such a $p$.
By Sylow's theorem, there exists some $\eta\in Z$ such that
$\Theta_p\subseteq \eta Z_p\eta^{-1}$. Hence we have
$$|\Pi(\Theta_p\cap \eta Q_p\eta^{-1})|=\frac{|\Pi(\Theta_p)|}
{[\Pi(\Theta_p\cap\eta Z_p\eta^{-1}):\Pi(\Theta_p\cap\eta Q_p\eta^{-1})]}.$$
We have
\begin{align*}
[\Pi(\Theta_p\cap\eta Z_p\eta^{-1}):\Pi(\Theta_p\cap\eta Q_p\eta^{-1})]
&\leq [\Theta_p\cap\eta Z_p\eta^{-1}:\Theta_p\cap\eta Q_p\eta^{-1}] \\
&\leq [\eta Z_p\eta^{-1}:\eta Q_p\eta^{-1}]
=[Z_p:Q_p]\leq S_{\delta},
\end{align*}
which combined with the previous estimates gives
\begin{equation}
\label{eq:interseccio-Theta-Q-gran}
|\Pi(\Theta_p\cap \eta Q_p\eta^{-1})|>\frac{\chi(X)C_{\Jord}^2\Lambda(K)^2\Lambda(K_2)C_T(\delta-1)^4}
{\chi(X)C_{\Jord}\Lambda(K_2)}=C_{\Jord}\Lambda(K)^2C_T(\delta-1)^4.
\end{equation}
Since $\Theta_p$ is $K_3$-stable and $[\Theta_p:\Theta_p\cap \eta Q_p\eta^{-1}]\leq S_{\delta}$,
we deduce (using Lemma \ref{lemma:good-p-subgroup-of-stable}) that
\begin{equation}
\label{eq:interseccio-Theta-Q-estable}
\Theta_p\cap \eta Q_p\eta^{-1}\qquad\text{is $K_2$-stable}
\end{equation}
because $K_3/[\Theta_p:\Theta_p\cap \eta Q_p\eta^{-1}]\geq K_2$.
We next want to apply Lemma \ref{lemma:intersection-transitive} to
$$\Theta_1 := \Theta, \qquad
\Theta_2 := \Theta_p\cap \eta Q_p\eta^{-1}, \qquad
\Theta_3 := \eta Q_p\eta^{-1}.$$
The group $\Theta_1$ satisfies the hypothesis of Lemma \ref{lemma:intersection-transitive}
by the definition of $\gG$. That $\Theta_2$ satisfies the hypothesis is the content
of statements (\ref{eq:interseccio-Theta-Q-gran}) and (\ref{eq:interseccio-Theta-Q-estable}).
Finally, $\Theta_3$ is $K_2$-stable and $|\Pi(\Theta_3)|\geq |\Pi(\Theta_p\cap \eta Q_p\eta^{-1})|$, so
$|\Pi(\Theta_3)|$ is big enough in view of (\ref{eq:interseccio-Theta-Q-gran}).
We have inclusions $\Theta_2\subseteq\Theta_1$ and $\Theta_2\subseteq\Theta_3$,
which imply $Y^{\Theta_1}\subseteq Y^{\Theta_2}$ and $Y^{\Theta_3}\subseteq Y^{\Theta_2}$.
Since both $Y^{\Theta_1}$ and $Y^{\Theta_3}=\eta Y^{Q_p}$ are nonempty,
we have in particular
$Y^{\Theta_1}\cap Y^{\Theta_2}\neq\emptyset\neq Y^{\Theta_3}\cap Y^{\Theta_2}$.
Now, Lemma \ref{lemma:intersection-transitive} implies $Y^{\Theta_1}\cap Y^{\Theta_3}\neq\emptyset$,
or equivalently $\eta^{-1} Y^{\Theta_1}\cap Y^{Q_p}\neq\emptyset$.

By the previous arguments, we can choose, for any $\Theta\in\gG$, some prime $p_\Theta\in\pi(G)$
and some $\eta_\Theta\in Z$ such that $Q_{p_{\Theta}}$ satisfies the hypothesis of Lemma \ref{lemma:intersection-transitive}
and
$$\eta_{\Theta}^{-1}Y^{\Theta}\cap Y^{Q_{p_{\Theta}}}\neq\emptyset$$
(neither $p_{\Theta}$ nor $\eta_{\Theta}$ are necessarily unique; we just make a choice).
Now, if $\Theta,\Theta'$ satisfy $p_{\Theta}=p_{\Theta'}$ then we can apply
Lemma \ref{lemma:intersection-transitive} to $\Theta_1:=\eta_{\Theta}^{-1}\Theta \eta_{\Theta}$, $\Theta_2:=Q_{p_{\Theta}}=Q_{p_{\Theta'}}$
and $\Theta_3:=\eta_{\Theta'}^{-1}\Theta'\eta_{\Theta'}$ and deduce that
$$\eta_{\Theta}^{-1}Y^{\Theta}\cap \eta_{\Theta'}^{-1}Y^{\Theta'}\neq\emptyset.$$
This implies that the classes of $Y^{\Theta}$ and $Y^{\Theta'}$ in $\hH$ belong
to the same $R$-orbit. We have thus proved that
$l=|\hH/R|\leq |\pi(G)|\leq r.$
\end{proof}

\begin{lemma}
\label{lemma:diferencia-divisible}
Let $f:=(C_{\Jord}\Lambda(K_3)K_5-1)!$. The number
$\chi(X)-\chi(Y^{\gG})$
is divisible by $$\frac{d}{\GCD(d,f)}.$$
\end{lemma}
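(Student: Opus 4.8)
The plan is to evaluate $\chi(X)-\chi(Y^{\gG})$ combinatorially via the $R$-action on a triangulation of $Y$. Recall that $Y=X^{\Gamma}$, that $\chi(Y)=\chi(X)$ because $\Gamma$ is $K$-acceptable, and that the $Z$-action on $Y$ is trivial on $Z_Y$, hence factors through a faithful action of $R=Z/Z_Y$ on $Y$. Since $G$ is finite, $Z$ has only finitely many subgroups, so $\gG$ is finite and $Y^{\gG}=\bigcup_{\Theta\in\gG}Y^{\Theta}$ is a finite union of closed neat submanifolds of $Y$ (by Lemma \ref{lemma:linearization}), which is $R$-invariant. I would invoke equivariant triangulation theory (in the form already used in the paper, see \cite{I}), applied to $Y$ together with the finite $R$-invariant family $\{Y^{\Theta}\mid\Theta\in\gG\}$, to pick an $R$-good triangulation $(\cC,\phi)$ of $Y$ in which $Y^{\gG}$ is the underlying space of a subcomplex $\cC'\subseteq\cC$. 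Then, since each simplex of $\cC'$ is a simplex of $\cC$,
$$\chi(X)-\chi(Y^{\gG})=\chi(\cC)-\chi(\cC')=\sum_{j\ge 0}(-1)^{j}\,\#\{\sigma\in\cC\setminus\cC'\mid\dim\sigma=j\}.$$

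Next I would set $N:=d/\GCD(d,f)$ and reduce the divisibility claim to a statement about simplex stabilizers. The group $R$ preserves $\cC$ and $\cC'$, hence acts on $\cC\setminus\cC'$ preserving dimension, so for each $j$ the integer $\#\{\sigma\in\cC\setminus\cC'\mid\dim\sigma=j\}$ is a sum of orbit sizes $[R:R_{\sigma}]=d/|R_{\sigma}|$. It therefore suffices to show $|R_{\sigma}|\mid\GCD(d,f)$ for every $\sigma\in\cC\setminus\cC'$: this yields $N\mid[R:R_{\sigma}]$ for all such $\sigma$, hence $N$ divides each of the cardinalities above and thus their alternating sum. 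As $|R_{\sigma}|\mid d=|R|$ by Lagrange, what remains is $|R_{\sigma}|\mid f$, and since $f=(C_{\Jord}\Lambda(K_3)K_5-1)!$ this follows once $|R_{\sigma}|\le C_{\Jord}\Lambda(K_3)K_5-1$.

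That last bound is the heart of the matter, and it is where the definition of $\gG$ enters. Because the triangulation is good, $R_{\sigma}$ fixes $|\sigma|$ pointwise; as $|\sigma|\not\subseteq Y^{\gG}$ I can choose $y\in|\sigma|\setminus Y^{\gG}$, so that $R_{\sigma}\subseteq R_{y}=\Pi(Z_{y})$. Suppose, for contradiction, that $|R_{y}|\ge C_{\Jord}\Lambda(K_3)K_5$. Applying Lemma \ref{lemma:A} with $\kappa=K_3$ to the action of $Z$ on $X$ at the point $y$, the stabilizer $Z_{y}$ has an abelian $K_3$-stable subgroup $Q$ with $[Z_{y}:Q]\le C_{\Jord}\Lambda(K_3)$, whence $|\Pi(Q)|\ge|R_{y}|/(C_{\Jord}\Lambda(K_3))\ge K_5$. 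Since $Q\subseteq Z_{y}$ fixes $y\in Y$, we get $y\in Y^{Q}\ne\emptyset$, so $Q\in\gG$ and hence $y\in Y^{Q}\subseteq Y^{\gG}$, contradicting $y\notin Y^{\gG}$. Therefore $|R_{\sigma}|\le|R_{y}|\le C_{\Jord}\Lambda(K_3)K_5-1$, which divides $f$, and the argument closes.

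The one genuinely delicate point is the choice of triangulation: I need it simultaneously $R$-equivariant, good in the sense of Section \ref{s:preliminaries}, and fine enough that the $R$-invariant closed set $Y^{\gG}$ is realized as a subcomplex. I expect this to follow from the equivariant triangulation results already cited in the paper, applied to $Y$ and the finite $R$-invariant family of neat submanifolds $\{Y^{\Theta}\mid\Theta\in\gG\}$, possibly followed by a barycentric subdivision to restore goodness; I would spell this step out carefully. Granting it, everything else is the elementary orbit count above together with Lemma \ref{lemma:A} and the definition of $\gG$.
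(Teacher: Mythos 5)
Your proof is correct and follows essentially the same route as the paper: bound the stabilizer $R_\sigma$ of any simplex of $Y$ not contained in $Y^{\gG}$ via Lemma \ref{lemma:A} and the definition of $\gG$, then count $R$-orbits in the alternating sum. The one point you flag as delicate dissolves if you instead take a $G$-good triangulation of the ambient manifold $X$ (as the paper does): each $Y^{\Theta}=X^{\Theta}\cap X^{\Gamma}$ is the fixed-point set of a subgroup of $G$, hence automatically a subcomplex, and so is the finite union $Y^{\gG}$, making the relative equivariant triangulation machinery unnecessary.
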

\begin{proof}
Let $(\cC,\phi)$ be a $G$-good triangulation of $X$.
Suppose that $\sigma\in \cC$ satisfies
$\phi^{-1}(|\sigma|)\subseteq Y$ and $|R_{\sigma}|\geq C_{\Jord}\Lambda(K_3)K_5$.
Since the triangulation is good, for any $x\in\phi^{-1}(|\sigma|)$
we have $|R_x|\geq C_{\Jord}\Lambda(K_3)K_5$. Let $x\in \phi^{-1}(|\sigma|)$ be any
point.
By Lemma \ref{lemma:B} there exists an abelian $K_3$-stable
subgroup $Z_x^{\stable}$ of $Z_x$ satisfying
$[Z_x:Z_x^{\stable}]\leq C_{\Jord}\Lambda(K_3)$.
The bound $|R_x|\geq C_{\Jord}\Lambda(K_3)K_5$ implies that $|\Pi(Z_x^{\stable})|\geq K_5$,
and, since
$x\in Y$, we have $X^{Z_x^{\stable}}\cap Y\neq\emptyset$.
This implies that $Z_x^{\stable}\in\gG$, so $x\in Y^{\gG}$.
Consequently, if $\sigma\in \cC$ satisfies $\phi^{-1}(|\sigma|)\subseteq Y$
but $\phi^{-1}(|\sigma|)\not\subset Y^{\gG}$ then
$|R_{\sigma}|<C_{\Jord}\Lambda(K_3)K_5$. In other words, the cardinal of
$R\cdot\sigma\subseteq \cC$
has $d/s$ elements, where $s\in\NN$ satisfies $s<C_{\Jord}\Lambda(K_3)K_5$ and is a divisor of
$d$. Hence, $|R\cdot\sigma|$ is divisible by $d/\GCD(d,f)$.

Since $\chi(X)=\chi(Y)$,
$$\chi(Y)-\chi(Y^{\gG})=\sum_{j\geq 0} (-1)^j
\sharp\left\{\sigma\in \cC\mid \phi^{-1}(|\sigma|)\subseteq Y,\,
\phi^{-1}(|\sigma|)\not\subset Y^{\gG},\,\dim \sigma=j\right\},$$
and the action of $R$ on $\cC$ preserves the dimension,
the result follows from considering separately the contribution of each
$R$-orbit in $\cC$ to the sum on the right hand side.
\end{proof}

Combining (\ref{eq:euler-Y-gG}) and Lemmas
\ref{lemma:l-at-most-r} and \ref{lemma:diferencia-divisible}
we deduce that
$$\frac{d}{\GCD(d,f)}\quad\text{ divides }\quad\chi(X)\left(1-\sum_{j=1}^l\frac{d}{e_j}\right),
\qquad\text{and}\qquad l\leq r.$$
Let us assume, without loss of generality, that $e_1\geq e_2\geq\dots\geq e_l$. Then
Lemma \ref{lemma:equacio-diofantina} below implies that
$e_1\geq d C_{\delta}^{-1},$
where
$$C_{\delta}:=\max\{C_{\Delta}(l,\chi(X)s)\mid 1\leq l\leq r,\,1\leq s\leq f\}$$
for some universal function $C_{\Delta}\colon \NN\times\NN\to\NN$
(the number $f$ was defined in Lemma \ref{lemma:diferencia-divisible}).
In particular, $C_{\delta}$ depends on $X$ and $\delta$, but is independent of $G$ and
its action on $X$.

In other words, the stabilizer $R':=R_{h_1}$ of $h_1\in\hH$ has index at most $C_{\delta}$ in $R$.
Now the action of $R'$ on $Y$ fixes the intersection
$$Y_1:=\bigcap_{F\in\fF,\,[F]=h_1}F,$$
which by Lemma \ref{lemma:big-intersection} satisfies $\chi(Y_1)=\chi(X)$.
Let $R''=\{\rho\in R'\mid Y_1\subseteq Y^{\rho}\}$ and let $S:=R'/R''$.
The group $S$ acts naturally on $Y_1$.

\begin{lemma}
\label{lemma:bound-R''}
We have
$[R':R'']=|S|\leq (\chi(X)C_{\Jord}\Lambda(K_3)K_5)^r$.
\end{lemma}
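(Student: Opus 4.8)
The plan is to reproduce the template of the proof of $\text{T}(0)$: bound the order of each Sylow subgroup of $S$ separately by a constant depending only on $X$ and $\delta$, by a fixed point argument (Lemma \ref{lemma:B}) together with the definition of $\gG$ and the combinatorics of the $\approx$-class $h_1$. Since $\pi(S)\subseteq\pi(R)\subseteq\pi(G)$ has at most $r$ elements and $|S|=\prod_{p\in\pi(S)}|S_p|$, it suffices to show that every $p$-Sylow subgroup $S_p\subseteq S=R'/R''$ satisfies $|S_p|\leq\chi(X)C_{\Jord}\Lambda(K_3)K_5$. I do not expect to need the induction hypothesis $\text{T}(\delta-1)$ here; it has already been used in the construction of $\fF$ and of $h_1$.

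Fix $p\in\pi(S)$ and a $p$-Sylow subgroup $S_p\subseteq S$. First I would produce a $p$-subgroup $Z_p\subseteq Z$ with $\Pi(Z_p)\subseteq R'$ and with image $S_p$ in $S=R'/R''$: this is routine --- pull $S_p$ back through $R'$ and then through $\Pi$, and take a Sylow subgroup of the preimage, which still surjects onto the $p$-group $S_p$. Since $\Pi(Z_p)\subseteq R'$ and $R'=R_{h_1}$ stabilizes $h_1$, the group $Z_p$ preserves $Y_1\subseteq Y$. Next I would identify which elements of $Z_p$ act trivially on $Y_1$: for any lift $z\in Z$ of an element $\rho\in R'$ one has $Y^{\rho}=Y\cap X^{z}$, so, by the definition of $R''$, $\rho\in R''$ if and only if $Y_1\subseteq X^{z}$; hence the elements of $Z_p$ fixing $Y_1$ pointwise are exactly those killed by $Z_p\to S$, so $S_p$ is the quotient of $Z_p$ by that subgroup, and a subgroup of $Z_p$ acts nontrivially on $Y_1$ precisely when it has nontrivial image in $S$.

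Now suppose, for contradiction, that $|S_p|>\chi(X)C_{\Jord}\Lambda(K_3)K_5$. The group $Z_p$ acts on $X$ in an ECT way, $Y_1\subset X$ is a $Z_p$-invariant neat submanifold --- an intersection of finitely many fixed point loci, since $\fF\subseteq\{X^{H}\mid H\subseteq G\}$ is finite --- and $\chi(Y_1)=\chi(X)$ by Lemma \ref{lemma:big-intersection}; so Lemma \ref{lemma:B} applied with $\kappa=K_3$ gives an abelian $K_3$-stable subgroup $Q\subseteq Z_p$ with $[Z_p:Q]\leq\chi(X)C_{\Jord}\Lambda(K_3)$ and $Y_1^{Q}\neq\emptyset$. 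Then the image $\bar Q$ of $Q$ in $S_p$ has index at most $\chi(X)C_{\Jord}\Lambda(K_3)$ in $S_p$, so $|\bar Q|>K_5\geq 1$. On one hand $\bar Q\neq 1$, so $Q$ acts nontrivially on $Y_1$, i.e., $Y_1\not\subseteq X^{Q}$. On the other hand $\bar Q$ is a quotient of $\Pi(Q)$, so $|\Pi(Q)|>K_5$; combined with the facts that $Q$ is abelian and $K_3$-stable and $Y^{Q}\supseteq Y_1^{Q}\neq\emptyset$, this shows $Q\in\gG$.

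To close the argument I would invoke the structure of $h_1$. Write $Y_1=F_1\cap\dots\cap F_m$ with each $F_i\in\fF$ and $[F_i]=h_1$. From $\emptyset\neq Y_1^{Q}=Y_1\cap Y^{Q}\subseteq F_i\cap Y^{Q}$ we obtain $F_i\approx Y^{Q}$ for every $i$, hence $[Y^{Q}]=h_1$; thus $Y^{Q}$ is one of the sets whose intersection is $Y_1$, which forces $Y_1\subseteq Y^{Q}\subseteq X^{Q}$, contradicting $Y_1\not\subseteq X^{Q}$. Therefore $|S_p|\leq\chi(X)C_{\Jord}\Lambda(K_3)K_5$, and multiplying over the at most $r$ primes of $\pi(S)$ gives $[R':R'']=|S|\leq(\chi(X)C_{\Jord}\Lambda(K_3)K_5)^r$. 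The one place that needs genuine care is the bookkeeping of the previous two paragraphs --- keeping straight the relations among the action of $Q$ on $Y_1$, the orders of $\bar Q$ and of $\Pi(Q)$, and membership of $Y^{Q}$ in the class $h_1$ --- while every individual numerical estimate is immediate once Lemma \ref{lemma:B} is in hand; the precise value of $K_5$ plays no role here beyond $K_5\geq 1$.
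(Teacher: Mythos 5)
Your proof is correct and follows essentially the same route as the paper's: pass to a $p$-Sylow subgroup of (the preimage in $Z$ of) $R'$ surjecting onto an over-large $S_p$, apply Lemma \ref{lemma:B} to its action on $Y_1$ to get a $K_3$-stable abelian $Q$ with $Y_1^Q\neq\emptyset$ and $|\Pi(Q)|\geq K_5$, conclude $Q\in\gG$ with class $h_1$, and derive the contradiction $Y_1\subseteq Y^Q$ versus $Q$ having nontrivial image in $S$. The only cosmetic difference is that you bound each Sylow subgroup separately and multiply, whereas the paper runs the whole argument as a single contradiction; the content is identical.
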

\begin{proof}
We proceed by contradiction. Since
$|\pi(S)|\leq |\pi(R')|\leq |\pi(G)|\leq r$,
if the inequality of the lemma
does not hold then there exists some prime $p\in \pi(S)$
and a $p$-Sylow subgroup $S_p$ of $S$ with more than
$\chi(X)C_{\Jord}\Lambda(K_3)K_5$ elements.
Let $Z'=\Pi^{-1}(R')$ and let $\Pi_S\colon Z'\to S$ be the composition of
$\Pi\colon Z'\to R'$ with the projection $R'\to S$.
Let $Z'_p$ be a $p$-Sylow subgroup of $Z'$ satisfying
$\Pi_S(Z'_p)=S_p$.
Applying Lemma \ref{lemma:B} to the action
of $Z'_p$ on $X$ and the submanifold $Y_1\subseteq X$, we deduce that
there is an abelian $K_3$-stable subgroup $Q$ of $Z'_p$
such that $[Z'_p:Q]\leq \chi(X)C_{\Jord}\Lambda(K_3)$
and $Y_1^Q\neq\emptyset$.
Since $|\Pi_S(Z_p')|=|S_p|>\chi(X)C_{\Jord}\Lambda(K_3)K_5$,
it follows that $|\Pi_S(Q)|\geq K_5$, which implies
$|\Pi(Q)|\geq K_5$. Since $\emptyset\neq Y_1^Q\subseteq Y^Q$,
it follows that $Q\in\gG$. Since $Y_1^Q\neq\emptyset$,
the class of $Q$ in $\hH$ is equal to $h_1$. By the definition
of $Y_1$, we should have
$Y_1\subseteq Y^{Q}$, which implies $|\Pi_S(Q)|=\{1\}$, contradicting
$|\Pi_S(Q)|\geq K_5$. The proof is complete.
\end{proof}

Summing up, $R$ contains a subgroup $R''$ of index at most
$C_{\delta}(\chi(X)C_{\Jord}\Lambda(K_3)K_5)^r$ all of whose elements
fix every point in $Y_1$. Let $Z'':=\Pi^{-1}(R'')$, and let $y\in Y_1$
be any point. Then $[Z:Z'']\leq C_{\delta}(\chi(X)C_{\Jord}\Lambda(K_3)K_5)^r$
and every element of $Z''$ fixes $y$. By  (2) in Lemma \ref{lemma:linearization} and
Theorem \ref{thm:Jordan-classic} there is an abelian subgroup
$A_{\delta}\subseteq Z''$ of index at most $C_{\Jord}$
which is isomorphic to a subgroup of $\GL(\dim X,\RR)$.
We have
$$[Z:A_{\delta}]\leq C_T(\delta):=C_{\Jord}C_{\delta}(\chi(X)C_{\Jord}\Lambda(K_3)K_5)^r,$$
so the proof of the induction step is complete. Hence the statement T($\delta$)
has been proved for all values of $\delta$.

\subsection{Conclusion of the proof of Theorem \ref{thm:bounded-primes-CT}}
\label{ss:end-of-proof}
As mentioned earlier,
T($rC_{\depth}(X)$) implies that any finite group
$G$ acting in an ECT way on $X$ and satisfying $|\pi(G)|\leq r$
contains an abelian subgroup $B:=A_{rC_{\depth}(X)}$ of index at most $C_T(rC_{\depth}(X))$
which is isomorphic to a subgroup of $\GL(\dim X,\RR)$. By Theorem \ref{thm:abelian-generat},
there is a subgroup $A\subseteq B$ of index at most $C_{\dim X}$ which can be generated by
at most $[\dim X/2]$ elements. So the proof of Theorem \ref{thm:bounded-primes-CT} is now
complete.

\subsection{An arithmetic lemma}
The following lemma is exactly the same as Lemma 4.1 in \cite{Bu}. We include a proof
for completeness.

\begin{lemma}
\label{lemma:equacio-diofantina}
There exists a function $C_{\Delta}\colon \NN\times\NN\to\NN$ with the following
property. Suppose that $d,e_1,\dots,e_l,a\in\NN$ and $t\in\ZZ$ satisfy
$e_1\geq \dots\geq e_l$, each $e_j$ divides $d$, and
\begin{equation}
\label{eq:equacio-diofantina}
\frac{d}{e_1}+\dots+\frac{d}{e_l}-1=\frac{dt}{a}.
\end{equation}
Then $e_1\geq d/C_{\Delta}(l,a).$
\end{lemma}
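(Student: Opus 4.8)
The key identity to exploit is the Diophantine equation \eqref{eq:equacio-diofantina}, which after clearing denominators says that $a$ divides $d\bigl(\sum_j d/e_j - 1\bigr)$; equivalently, writing $a=\GCD(a,d)\cdot a'$, the integer $a'$ divides $\sum_j d/e_j - 1$. The plan is to bound $e_1$ from below by bounding the unit fractions $1/e_j$ from below, using the fact that a sum of at most $l$ unit fractions that is close to an integer cannot have all its terms too small. I would proceed by induction on $l$, the number of terms.

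First I would treat the base case $l=1$: then $d/e_1 - 1 = dt/a$, so $d/e_1 = 1 + dt/a$ is a positive integer, hence $d/e_1 \le d$ forces nothing directly, but in fact $d/e_1$ is a positive integer dividing... more carefully, $d/e_1 \ge 1$ always, and if $d/e_1 = 1$ we are done with $C_\Delta(1,a)=1$; otherwise $d/e_1 \ge 2$ and then $dt/a = d/e_1 - 1 \ge 1$, giving $t \ge 1$ and $d/e_1 = 1 + dt/a \le 1 + d t$, which is automatic — so instead I observe that $a' \mid (d/e_1 - 1)$ and $0 \le d/e_1 - 1$; if $d/e_1 - 1 = 0$ we are done, and if $d/e_1-1 > 0$ then $d/e_1 - 1 \ge a'/\GCD(\cdot)$... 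The cleanest route for the base case: since $d/e_1$ is a positive integer and equals $1 + dt/a$, and $e_1 \mid d$, one has $e_1 = d/(1 + dt/a)$; boundedness of $e_1$ from below is equivalent to an upper bound on $1 + dt/a = d/e_1$, and $d/e_1 \le a(\text{something})$ — here I would simply set $C_\Delta(1,a) := a$ and argue that either $d/e_1 = 1$, or $d \mid a \cdot (\text{stuff})$ forces $e_1 \ge d/a$. I expect the precise base-case bookkeeping to be the first thing to nail down.

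For the inductive step, suppose the statement holds for $l-1$ and we are given $l$ terms. The dichotomy is: either $d/e_1 \ge d/C_\Delta(l,a)$ is what we want (so assume $e_1$ is large, i.e. $d/e_1$ small), or we can absorb the last term. Concretely, if $e_l = e_1$ then all $e_j$ are equal and $l\,d/e_1 - 1 = dt/a$, so $d/e_1 = (1 + dt/a)/l$ is a rational with bounded... again reduce to a divisibility bound. In general I would split off $d/e_l$: rewrite \eqref{eq:equacio-diofantina} as $\sum_{j<l} d/e_j - 1 = dt/a - d/e_l = d(ta - e_l \cdot(a/e_l)\cdots)$ — the issue is that $d/e_l$ need not have denominator dividing $a$, so I would instead clear denominators globally: multiply through by $a$ to get $a\sum_j d/e_j = a + dt$, an equation among integers. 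Then reduce modulo $e_1$: since $e_j \mid d$ and (for $j$ with $e_j \mid e_1$, in particular $j=1$) the term $a\,d/e_1$ is an integer, I would look at the equation modulo a suitable divisor. The honest approach, matching Bürgstein's Lemma 4.1, is: set $e_1$ aside, apply the induction hypothesis to the remaining equation $\frac{d'}{e_2} + \dots + \frac{d'}{e_l} - 1 = \frac{d' t'}{a'}$ for an appropriate $d', a'$ obtained by multiplying \eqref{eq:equacio-diofantina} by $e_1/d$ and rearranging, to conclude $e_2 \ge d'/C_\Delta(l-1, a')$, and then bound $e_1$ in terms of $e_2$ and $a$ using the original equation one more time.

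The main obstacle will be the denominator management: $d/e_l$ does not a priori have its denominator dividing $a$, so the naive "subtract the last term" move breaks the hypothesis format, and one must either clear all denominators and work with the integer equation $a\sum_j d/e_j = a + dt$ throughout, or rescale so that the reduced equation is again of the required shape. I would define $C_\Delta$ recursively by $C_\Delta(1,a) := a$ and $C_\Delta(l,a) := a \cdot l \cdot C_\Delta(l-1, a\,l!)$ (or some similarly generous expression) and check that the bounds propagate; optimizing the constant is not needed, only its existence and dependence on $l$ and $a$ alone. The verification that the rescaled equation still has integral coefficients and that all new $e_j$ still divide the new $d$ is the routine-but-delicate heart of the argument, and is where I would spend most of the care.
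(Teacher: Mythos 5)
Your overall architecture (induction on $l$, a recursively defined $C_{\Delta}$, base-case constant $C_{\Delta}(1,a)=a$) matches the paper's, and the base case does go through the way you suspect: writing $g=d/e_1$, equation (\ref{eq:equacio-diofantina}) rearranges to $a=g(a-e_1t)$, so $g$ divides $a$, hence $g\le a$ and $e_1=d/g\ge d/a$. But your inductive step has a genuine gap: you propose to set aside the \emph{first} term $d/e_1$ and apply the induction hypothesis to the remaining $l-1$ terms. Subtracting $d/e_1$ from both sides gives $\sum_{j\ge 2}d/e_j-1=d(te_1-a)/(ae_1)$, which is a tuple of the required shape but with parameter $a'=ae_1$ --- and $e_1$ is precisely the quantity whose size is unknown, so the induction hypothesis returns $e_2\ge d/C_{\Delta}(l-1,ae_1)$, which is circular. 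The alternative rescaling you suggest (multiplying through by $e_1/d$) produces terms $e_1/e_j$ that need not be of the form $d'/e_j$ with $e_j\mid d'$, and turns the $-1$ into $-e_1/d$, so the equation leaves the required format entirely.

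The missing idea is that one must split off the \emph{smallest} $e_l$, and that $e_l$ is automatically bounded before any induction is invoked: since each $e_j$ divides $d$, every $d/e_j\ge 1$, so the left-hand side of (\ref{eq:equacio-diofantina}) is positive and hence $t\ge 1$; then $d/a\le dt/a\le l\,d/e_l$ gives $1\le e_l\le al$. Subtracting $d/e_l$ now yields $\sum_{j\le l-1}d/e_j-1=d(te_l-a)/(ae_l)$, a tuple of the required shape with parameter $ae_l\le a^2l$, bounded purely in terms of $l$ and $a$. The recursion $C_{\Delta}(l,a):=\max\{C_{\Delta}(l-1,aj)\mid 1\le j\le al\}$ therefore closes, and $e_1\ge d/C_{\Delta}(l-1,ae_l)\ge d/C_{\Delta}(l,a)$. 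Your tentative guess $C_{\Delta}(l,a):=a\cdot l\cdot C_{\Delta}(l-1,a\,l!)$ shows you sensed that the second argument should only grow by a factor controlled by $l$ and $a$, but without the elementary bound $e_l\le al$ there is nothing to justify it, and with the term $d/e_1$ split off instead of $d/e_l$ no such bound is available.
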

\begin{proof}
Consider for any $(l,a)\in\NN^2$ the set
$\sS(l,a)\subset \NN^{l+1}\times\ZZ$ consisting of
tuples $(d,e_1,\dots,e_l,t)$ satisfying
(\ref{eq:equacio-diofantina}) and also
$e_1\geq\dots\geq e_l$ and $e_j|d$ for each $j$.
Define $C_{\Delta}\colon \NN\times\NN\to\NN$ recursively
as follows: $C_{\Delta}(1,a):=a$ and, for each $l>1$,
$C_{\Delta}(l,a):=\max\{C_{\Delta}(l-1,aj)\mid 1\leq j\leq al\}$
(in fact $C_{\Delta}(l,a)=C_{\Delta}(l-1,a^2l)$).
We prove that for any $(d,e_1,\dots,e_l,t)\in\sS(l,a)$ we have
$e_1\geq d/C_{\Delta}(l,a)$ using induction on $l$. For the
case $l=1$, suppose that $(d,e_1,t)\in\sS(1,a)$ and let $d=e_1g$,
where $g\in\NN$. Rearranging (\ref{eq:equacio-diofantina}) we deduce
that $g$ divides $a$, which implies $g\leq a$,
so $e_1=d/g\geq d/a=d/C_{\Delta}(1,a)$.
Now assume that $l>1$ and that the inequality has been proved for smaller
values of $l$. Let $(d,e_1,\dots,e_l,t)\in\sS(l,a)$.
Since each $e_j$ divides $d$, we have $d/e_j\geq 1$ for each $j$, so the left
hand side in (\ref{eq:equacio-diofantina}) is positive. This implies that $t\geq 1$.
Using $e_1\geq\dots\geq e_l$ we can estimate $d/a\leq l d/e_l$,
so $1\leq e_l\leq al$. Furthermore, (\ref{eq:equacio-diofantina}) implies
$$\frac{d}{e_1}+\dots+\frac{d}{e_{l-1}}-1=\frac{dt}{a}-\frac{d}{e_l}=
\frac{d(te_l-a)}{ae_l},$$
so $(d,e_1,\dots,e_{l-1},te_l-a)$ belongs to $\sS(l-1,aj)$ for some $1\leq j\leq al$.
Using the induction hypothesis we deduce that $e_1\geq d/C_{\Delta}(l-1,aj)\geq d/C_{\Delta}(l,a)$.
\end{proof}

\section{Proof of Theorem \ref{thm:main}}
\label{s:proof-thm:main}

Let $X$ be a manifold without odd cohomology, and let $X_1,\dots,X_r$
be its connected components. Let $G$ be a finite group acting effectively on $X$.
By Lemma \ref{lemma:Minkowski} we may assume, replacing if necessary
$G$ by a subgroup of uniformly bounded index, that $G$ acts on $X$ in a
ECT way. This implies that the action of $G$ preserves the connected components
of $X$, so for any $i$ we have a map $G\to\Diff(X_i)$. Let $G_i\subset\Diff(X_i)$
be the image of this map, and let $\pi_i:G\to G_i$ be the natural projection.
The fact that $G$ acts effectively on $X$ means that $\bigcap_i\Ker\pi_i=\{1\}$.

Combining Theorem \ref{thm:bounded-primes} and
Theorem \ref{thm:MT} (taking
$\mathcal C$ to be the collection of finite subgroups of $\Diff(X_i)$),
we deduce that for every $i$ there is an abelian subgroup $A_i\subseteq G_i$
such that $[G_i:A_i]\leq C_i$, where $C_i$ depends only on $X_i$.

Let $A':=\bigcap_i\pi_i^{-1}(A_i)$. Then $[G:A]\leq \prod_iC_i$. Since
each $A_i$ is abelian, for any $a,b\in A'$ we have $\pi_i([a,b])=[\pi_i(a),\pi_i(b)]=1$
for every $i$. Since $\bigcap_i\Ker\pi_i=\{1\}$, we deduce that $[a,b]=1$, so $A'$ is abelian.
To finish the proof of Theorem \ref{thm:main}, apply
Theorem \ref{thm:main-abelian} to $A'$.

\end{document}